\documentclass[12pt]{amsart}
\usepackage{amsmath, amsthm, amssymb, eqnarray}
\usepackage{accents}
\usepackage{tikz-cd}
\usetikzlibrary{calc}

\input xy
\xyoption{all}

\usepackage{enumitem} 
\usepackage{amsfonts, calligra, amscd, bbm, eucal, latexsym, extarrows, mathrsfs}
\usepackage[utf8]{inputenc}
\usepackage[T1]{fontenc}
\usepackage[bookmarks=false]{hyperref}

\usepackage{fullpage}
\usepackage[all,cmtip]{xy}
\usepackage{microtype}
\usepackage{cancel}
\usepackage{fourier}
\usepackage{graphicx}
\usepackage{soul}

\usepackage[skip=3pt plus1pt, indent=20pt]{parskip}

\newcommand{\nref}[2]{\hyperref[#1]{\ref*{#1}$_{#2}$}}

\usepackage{mathtools}

\theoremstyle{plain}
\newtheorem{theorem}{Theorem}[section]
\newtheorem{proposition-definition}{Proposition/Definition}[section]
\newtheorem{theorem-definition}{Theorem/Definition}[section]

\newtheorem*{theoremA}{Theorem A}
\newtheorem*{theoremB}{Theorem B}
\newtheorem*{theoremC}{Theorem C}
\newtheorem*{proposition-definition-intro}{Proposition/Definition}
\newtheorem*{definition-intro}{Definition}
\newtheorem*{cor-intro}{Corollary}
\newtheorem*{prop-intro}{Proposition}
\newtheorem{proposition}[theorem]{Proposition}		
\newtheorem{corollary}[theorem]{Corollary}
\newtheorem{lemma}[theorem]{Lemma}
\newtheorem*{conjecture}{Conjecture}
\newtheorem*{conjecture-intro}{Conjecture}

\newtheorem{construction-definition}[theorem]{Construction/Definition}

\newtheorem{problem-Deligne}{Problem}

\theoremstyle{remark}

\newtheorem*{remark-intro}{Remark}



\newcommand{\ov}{\overline}



\newcommand{\CBbb}{\mathbb C}

\newcommand{\NBbb}{\mathbb N}

\newcommand{\PBbb}{\mathbb P}
\newcommand{\QBbb}{\mathbb Q}
\newcommand{\RBbb}{\mathbb R}

\newcommand{\ZBbb}{\mathbb Z}



\newcommand{\Fcal}{\mathcal F}

\newcommand{\Hcal}{\mathcal H}

\newcommand{\Ocal}{\mathcal O}

\newcommand{\Xcal}{\mathcal X}










\providecommand{\vol}{\operatorname{vol}}

\providecommand{\Gr}{\operatorname{Gr}}

\DeclareMathOperator{\Mil}{Mil}

\DeclareMathOperator{\SheafHom}{\mathscr{H}\text{\kern -3pt {\calligra\large om}}\,}

\DeclareMathOperator{\tr}{tr}
\DeclareMathOperator{\Real}{Re}



\numberwithin{equation}{section}

\begin{document}
\setcounter{tocdepth}{1}
\title{The spectral genus of an isolated hypersurface singularity and a conjecture relating to the Milnor number}
\author{Dennis Eriksson}
\author{Gerard Freixas i Montplet}

\address{Dennis Eriksson \\ Department of Mathematics \\ Chalmers University of Technology and  University of Gothenburg}
\email{dener@chalmers.se}

\address{Gerard Freixas i Montplet \\ CNRS -- Centre de Math\'ematiques Laurent Schwartz - \'Ecole Polytechnique - Institut Polytechnique de Paris}
\email{gerard.freixas@polytechnique.edu}


\subjclass[2010]{Primary: 32S25,  32S30. Secondary: 14M25, 32S20, 58J52.}

\keywords{Isolated hypersurface singularities, Spectrum of singularities, Milnor numbers, Analytic torsion}

\begin{abstract}
In this paper, we introduce the notion of spectral genus $\widetilde{p}_{g}$ of a germ of an isolated hypersurface singularity  $(\CBbb^{n+1}, 0) \to (\CBbb, 0)$, defined as a sum of small exponents of monodromy eigenvalues. The number of these is equal to the geometric genus $p_{g}$, and hence $\widetilde{p}_g$ can be considered as a secondary invariant to it. We then explore a secondary version of the Durfee conjecture on $p_{g}$, and we predict an inequality between $\widetilde{p}_{g}$ and the Milnor number $\mu$, to the effect that $$\widetilde{p}_g\leq\frac{\mu-1}{(n+2)!}.$$ We provide evidence by confirming our conjecture in several cases, including homogeneous singularities and singularities with large Newton polyhedra, and quasi-homogeneous or irreducible curve singularities. We also show that a weaker inequality follows from Durfee's conjecture, and hence holds for quasi-homogeneous singularities and curve singularities.

Our conjecture is shown to relate closely to the asymptotic behavior of the holomorphic analytic torsion of the sheaf of holomorphic functions on a degeneration of projective varieties, potentially indicating deeper geometric and analytic connections. 
\end{abstract}

\maketitle

\setcounter{tocdepth}{1}
\tableofcontents

\section{Introduction}

\subsection{}\label{subsec:Durfee-intro} In 1978, Durfee conjectured an inequality between the geometric genus $p_g$ and the Milnor number of complete intersection isolated surface singularities, in \cite{Durfee}. While the initial expectation does not hold in such generality, the conjecture was later extended in \cite{KSaito:distribution} by K. Saito to a conjecture about general germs of  $n$-dimensional isolated hypersurface singularities\footnote{In this paper, by a singularity, we mean a non-regular point.}, with $n\geq 2$, defined by germs of holomorphic functions $f: (\CBbb^{n+1},0) \to (\CBbb,0)$, as an inequality 
\begin{equation}\label{eq:Durfee-type}
    p_{g}<\frac{\mu}{(n+1)!}.
\end{equation}
This generalized inequality appeared naturally in his investigation of the distribution of the spectrum of the semi-simple part of the monodromy, in the same article. Particular cases and variants of the Durfee-type conjecture \eqref{eq:Durfee-type} have since been established by N\'emethi \cite{Nemethi-selecta-1, Nemethi-selecta-2} (suspension-type surfaces), Yau--Zhang \cite{Yau-Zhang} (quasi-homogeneous singularities) and  Kerner--N\'emethi \cite{Kerner-Nemethi-2} (generic singularities with large Newton diagram).

\subsection{} By work of M. Saito and Steenbrink \cite{MSaitogenus, SteenbrinkMixedAssociated}, the geometric genus of an isolated hypersurface singularity $f=0$ is related to the cohomology of the Milnor fiber and its mixed Hodge structure by $p_g = \dim \Gr^n_F H^n(\Mil_f)$. We define the \emph{spectral genus of the singularity} (cf. \textsection\ref{section:genus} below) as 
\begin{displaymath}
    \widetilde{p}_g = \sum \lambda_j,
\end{displaymath}
where the sum is over the rational numbers $\lambda_j\in [0,1)$ such that $\exp(2\pi i \lambda_j)$ is an eigenvalue of the semi-simple part of the monodromy acting on $\Gr^n_F H^n(\Mil_f)$. It is a sort of secondary invariant of the geometric genus, and is in particular zero for rational or, equivalently, canonical singularities. One may wonder if there is a corresponding secondary version of the Durfee-type conjecture. In light of this, we propose the following:
\begin{conjecture}
 For an $n$-dimensional isolated hypersurface singularity, with $n\geq 1$, we have an inequality:
 \begin{enumerate}
     \item (Weak form) 
     \begin{displaymath}
      \widetilde{p}_g <  \frac{ \mu }{(n+2)!} .
   \end{displaymath}
     \item (Strong form) 
     \begin{displaymath}
          \widetilde{p}_g  \leq \frac{ \mu -1}{(n+2)!} .
     \end{displaymath}
 \end{enumerate}
\end{conjecture}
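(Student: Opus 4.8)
The plan is to translate the whole statement into the language of the singularity spectrum and then attack the two forms by different routes. Recall that the spectrum is a multiset of rationals $\alpha_1\le\cdots\le\alpha_\mu$ in $(0,n+1)$, symmetric about $(n+1)/2$, whose members in $(0,1]$ index exactly $\Gr^n_F H^n(\Mil_f)$, the semisimple monodromy acting on the $\alpha$-part by $e^{-2\pi i\alpha}$. Under the identification $e^{2\pi i\lambda_j}=e^{-2\pi i\alpha}$ this gives $\lambda_j=1-\alpha$ for $\alpha\in(0,1)$, so that
\begin{equation*}
\widetilde{p}_g\;=\;\sum_{0<\alpha_i<1}(1-\alpha_i)\;=\;\int_0^1 N(u)\,du,\qquad N(u):=\#\{\,i:\alpha_i<u\,\}.
\end{equation*}
This reformulation is the backbone: it converts the conjecture into an \emph{integrated} bound on the lower tail of the spectral distribution function $N$, and it already reproduces the target constant, since in the equidistributed regime $N(u)\approx\mu\,u^{n+1}/(n+1)!$ and $\int_0^1 u^{n+1}\,du=1/(n+2)$.

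For the weak form I would \emph{not} bound $N(u)$ pointwise (this fails already for $A_2$, where $N(u)=1>\mu u^2/2$ just below $u=1$), but instead deduce it from the generalized Durfee inequality \eqref{eq:Durfee-type} by a suspension/scaling trick. Set $g=f+x_{n+1}^{k}$ on $\CBbb^{n+2}$. Thom--Sebastiani gives $\mu(g)=(k-1)\mu$ and spectral numbers $\{\alpha_i+j/k:1\le j\le k-1\}$, whence
\begin{equation*}
p_g(g)\;=\;\#\{(i,j):\alpha_i+j/k\le 1\}\;=\;\sum_{0<\alpha_i<1}\floor{k(1-\alpha_i)}\;=\;k\,\widetilde{p}_g+O(1)\qquad(k\to\infty).
\end{equation*}
Applying \eqref{eq:Durfee-type} to the $(n+1)$-dimensional singularity $g$ yields $p_g(g)<\mu(g)/(n+2)!=(k-1)\mu/(n+2)!$; dividing by $k$ and letting $k\to\infty$ gives the weak form $\widetilde{p}_g\le\mu/(n+2)!$. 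The point is that $g$ inherits whatever makes Durfee available: if $f$ is quasi-homogeneous then so is $g$ (Yau--Zhang \cite{Yau-Zhang}), and if $f$ is a curve singularity then $g$ is a suspension-type surface (N\'emethi \cite{Nemethi-selecta-1,Nemethi-selecta-2}) --- which is precisely why the weak form holds unconditionally in those two families.

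For the strong form, and for the unconditional cases (homogeneous, weighted-homogeneous, large Newton polyhedron), I would compute $N$ explicitly and estimate $\int_0^1 N$ directly. For weighted-homogeneous $f$ the spectral numbers $\sum_i k_iw_i$ are read off the Poincar\'e series of the Jacobian ring, so $N(u)$ is a lattice-point count in the dilated simplex $\{\sum_i k_iw_i<u\}$; in the Newton-nondegenerate case one replaces the simplex by the dilated Newton polyhedron (M.~Saito--Varchenko, as exploited by Kerner--N\'emethi \cite{Kerner-Nemethi-2}). In both cases $\int_0^1 N(u)\,du$ is compared to its continuous (volume) approximation by an Euler--Maclaurin/Abel summation: the leading term is exactly $\mu/(n+2)!$, the ``large Newton'' or ``large exponent'' hypothesis forces the correction terms to have the favorable sign, and the unit deficit ``$-1$'' is tracked as the boundary contribution of the extremal spectral number $\alpha_1=\lct(f)$ together with the strict defect in Durfee.

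The main obstacle is exactly the passage from ``averaged'' to ``sharp''. The suspension argument is soft but yields only the non-strict \emph{weak} inequality and is conditional on Durfee for all the suspensions $g$; the explicit-spectrum arguments are sharp enough to capture the ``$-1$'' but are confined to the cases where the spectrum is governed by a single polytope. Since the pointwise estimate on $N$ is genuinely false, there is no monotone bound on the integrand to exploit, and in the absence of a closed form for the spectrum I do not expect an unconditional proof of the strong form in general --- consistent with the statement being posed as a conjecture. The most promising line toward the general strong form would be to upgrade the identity $p_g(g)=k\,\widetilde{p}_g+O(1)$ to an exact finite-$k$ relation and feed it a strong, \emph{distributional} form of Durfee, so that the residual $O(1)$ resolves in the direction producing the integral ``$-1$''.
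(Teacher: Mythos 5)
You correctly treat the statement as what it is, a conjecture, and your two lines of attack (the spectral reformulation $\widetilde{p}_g=\sum_{0<\alpha_i<1}(1-\alpha_i)$ and the suspension trick reducing the weak form to the Durfee-type inequality \eqref{eq:Durfee-type}) are exactly the ones the paper pursues. However, the one step you actually execute, the suspension argument, has a genuine gap: with $g=f+x_{n+1}^{k}$, your relation $p_g(g)=k\,\widetilde{p}_g+O(1)$ combined with Durfee for $g$ and the limit $k\to\infty$ produces only the non-strict bound $\widetilde{p}_g\le\mu/(n+2)!$, whereas the weak form of the conjecture is a strict inequality. A limit of strict inequalities is not strict, and the error term has the wrong sign to rescue you: since $p_g(g)\ge k\widetilde{p}_g-p_g$, Durfee for $g$ gives only $\widetilde{p}_g<\mu/(n+2)!+\bigl(p_g-\mu/(n+2)!\bigr)/k$, and nothing forces $p_g\le \mu/(n+2)!$ (generically $p_g$ is close to $\mu/(n+1)!$, so the correction is positive). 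So your argument, as written, does not prove the weak form even granting Durfee.

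The paper's fix (Lemma \ref{lemma:burgos} and Proposition \ref{prop:burgos}) is precisely the upgrade you name in your closing paragraph but do not carry out: choose $k$ with $T_s^{k}=1$ on $\Gr_F^n H^n(\Mil_f)$, so that every $k(1-\alpha_i')$ is an integer, and suspend with exponent $k+1$. Then the floor functions disappear and one gets the exact finite-$k$ identity $p_{g,h}=k\,\widetilde{p}_{g,f}$ with $\mu_h=k\,\mu_f$, whence a single application of the strict Durfee inequality to $h$ transfers strictness verbatim, with no limit taken. Two further points where your sketch is more optimistic than what can be delivered: for plane curves, N\'emethi's suspension result gives only the non-strict $p_{g,h}\le\mu_h/6$ and requires the exponent to be coprime to the multiplicities of the exceptional divisors, so the paper must recover strictness by hand and treat the families $A_n$, $A_{2n,2m}$, $D_{2n+3}$, $E_6$ separately; and your suggestion that the Euler--Maclaurin route captures the ``$-1$'' via the boundary contribution of $\alpha_1=\lct(f)$ is not substantiated---in the paper the strong form is only obtained for large Newton polyhedra (Theorem \ref{thm:asymptotic}, where the positive subleading term proportional to $\vol_n(\Gamma_-)$ dominates for large dilations), for homogeneous singularities via an exact closed-form evaluation (Lemma \ref{lemma:homogenousspectral}), and for quasi-homogeneous and irreducible curve singularities via Mordell-type lattice sums (Theorems \ref{thm:quasi-homogeneous} and \ref{thm:irreducible-curve}); the general strong form remains open there as well.
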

The inequality in the strong form is clearly satisfied for rational singularities, since then the spectral genus vanishes and $\mu\geq 1$. It is attained in the particular case of ordinary double points, since the Milnor number is moreover one. It seems natural to wonder if this is the only situation where the inequality is attained. Moreover, if K. Saito's conjectures in \cite{KSaito:distribution} about the distribution of monodromy eigenvalues hold, then the above inequalities are essentially optimal. Namely, it implies (cf. Proposition \ref{prop:consequence-KSaito}) that 
\begin{displaymath}
    \frac{\widetilde{p}_g}{\mu}  \to \frac{1}{(n+2)!}
\end{displaymath}
as the singularities get worse.

\subsection{} Our first main contribution is the following partial confirmation: 
\begin{theoremA}
The strong form of the conjecture is true in the following cases: 
    \begin{enumerate}[label=(\roman*)]
        \item Homogeneous singularities in arbitrary dimension. 
        \item Quasi-homogeneous curve singularities. 
        \item Irreducible curve singularities.
        \item Generic and convenient singularities with large Newton polyhedra in arbitrary dimension. 
    \end{enumerate}
\end{theoremA}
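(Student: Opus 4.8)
The plan is to translate everything into the combinatorics of the singularity spectrum, which is the common denominator of all four families. By the discussion in \textsection\ref{section:genus}, the part of $H^n(\Mil_f)$ contributing to $\Gr^n_F$ is carried by the spectral numbers $\alpha \in (0,1)$, and the monodromy exponent attached to such an $\alpha$ is $1-\alpha$; thus $\widetilde{p}_g = \sum_{\alpha}(1-\alpha)$, the sum running with multiplicity over $\mathrm{Sp}(f)\cap(0,1)$, while $\mu = \#\,\mathrm{Sp}(f)$. In each family the spectrum is governed by a piecewise-linear weight function $\phi$ on the positive octant (the degree $|a|/d$ for homogeneous singularities, a linear form for quasi-homogeneous ones, the Newton function for the Newton-nondegenerate case), so that $\widetilde{p}_g = \sum_a \bigl(1-\phi(a+\mathbf{1})\bigr)$ over lattice points $a$ with $\phi(a+\mathbf{1})<1$, and $\mu$ is a lattice-point count in $\{\phi\le 1\}$ corrected by lower-dimensional faces. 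The target inequality $\widetilde{p}_g \le (\mu-1)/(n+2)!$ thereby becomes a comparison between a first-moment sum and a count, both controlled by the region $\{\phi\le 1\}$.

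I would settle the homogeneous case (i) first, since it is completely explicit and fixes the shape of the argument. The spectral numbers are $\{(a_0+\dots+a_n+n+1)/d : 0\le a_k \le d-2\}$ and $\mu=(d-1)^{n+1}$. Summing $1-(a_0+\dots+a_n+n+1)/d$ over the lattice simplex $a_k\ge 0$, $\sum_k a_k \le d-n-1$, and collapsing the resulting binomial sum by a single Vandermonde identity, one finds the closed form
\[
  \widetilde{p}_g = \frac{1}{d}\binom{d}{n+2} = \frac{(d-1)(d-2)\cdots(d-n-1)}{(n+2)!}.
\]
The strong form then reduces to the elementary inequality $\prod_{k=1}^{n+1}(d-k) \le (d-1)^{n+1}-1$, which holds for all $d\ge 2$ and $n\ge 1$ because each factor $d-k$ is at most $d-1$ while the two integers coincide only for $d=2$, the ordinary double point where equality is attained.

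For the curve families (ii) and (iii) the spectrum lies in $(0,2)$ and is symmetric about $1$, so the problem becomes genuinely one-dimensional. For quasi-homogeneous curves I would substitute Steenbrink's explicit weighted-homogeneous spectrum, reducing $\widetilde{p}_g$ to $\sum(1-w_0 i - w_1 j)$ over lattice points below the line $w_0 x + w_1 y = 1$, and verify the arithmetic inequality against $(\mu-1)/6$ (for the Brieskorn--Pham type $x^p+y^q$ this is a Dedekind-sum computation, and the general weight is handled identically). For irreducible curves I would instead invoke the description of the spectrum in terms of the semigroup of the branch (equivalently its characteristic Puiseux exponents), translate the inequality into a statement about the semigroup and its conductor, and compare with $\mu = 2\delta - r + 1$.

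The Newton-nondegenerate case (iv) is where I expect the real difficulty. Here the spectrum is produced by the Newton filtration (Saito--Varchenko), so $\widetilde{p}_g$ and $\mu$ are a weighted lattice sum and a lattice count over the region under the Newton diagram, with no closed form available. The strategy is to approximate both by the corresponding volume integrals over $\{\nu\le 1\}$, so that the leading terms reproduce the homogeneous computation, and then to use the ``large Newton polyhedron'' hypothesis to force the gap between $\widetilde{p}_g$ and $(\mu-1)/(n+2)!$ to dominate every boundary and lower-order error term. The hard part is making this lattice-point-to-volume comparison uniform and effective --- controlling the contributions of the faces and of the lattice points near the coordinate hyperplanes --- which is precisely the type of estimate carried out by Kerner--N\'emethi in their proof of the Durfee-type inequality for large Newton diagrams, and I would adapt their method to bound the first moment rather than the count.
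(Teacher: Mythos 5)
Your case (i) is correct and is essentially the paper's argument: your closed form $\widetilde{p}_g=\tfrac{1}{d}\binom{d}{n+2}=\tfrac{(d-1)\cdots(d-n-1)}{(n+2)!}$ agrees with Lemma \ref{lemma:homogenousspectral} (derived there by polynomial interpolation rather than a Vandermonde collapse), and the integer inequality $\prod_{k=1}^{n+1}(d-k)\leq (d-1)^{n+1}-1$, with equality exactly at the node $d=2$, is the same endgame. The problem is that for (ii), (iii) and (iv) you have written a plan, not a proof, and in each case the missing part is precisely where the difficulty lies.

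For (ii), your claim that ``the general weight is handled identically'' to $x^p+y^q$ is the gap: a general quasi-homogeneous curve singularity is not of Brieskorn--Pham type, and arbitrary pairs of weights do not occur. One first needs the normalization $w_i\leq \tfrac12$ (K. Saito) and the Yoshinaga--Suzuki classification reducing to the three topological types $x^a+y^b$, $x(x^a+y^b)$, $xy(x^a+y^b)$; one then needs a Mordell-type evaluation that also covers $\gcd(a,b)=k>1$ (Proposition \ref{prop:Mordell}), and it is exactly the extra term $\tfrac{a'+b'}{12}(k-1)$ in the non-coprime case that rescues the bound $\mu/6-\widetilde{p}_g\geq \tfrac16$; none of this is in your sketch, and the final inequality is asserted rather than checked. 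For (iii), ``translate into a statement about the semigroup and compare with $\mu$'' produces no inequality: in the paper the problem reduces to showing $\sum_{i\geq 2}S_i^{+}-\sum_{i\leq g-1}S_i^{-}>0$ for explicit expressions in the Puiseux pairs, and the proof (Theorem \ref{thm:irreducible-curve}) is a delicate telescoping of the coefficient of each $k_j$, ending in $a_1=n_2\cdots n_g-1>0$; your proposal contains no substitute for that estimate, which is the entire content of the case. For (iv), the shape of your strategy (lattice sum versus volume, with boundary corrections) matches the paper, but the theorem proved there is not a uniform statement about ``large'' polyhedra: it concerns the $k$-fold dilations $f(x_0^k,\ldots,x_n^k)=0$, and the whole point is a second-order Euler--Maclaurin computation (Lemmas \ref{lemma:BerlineVergne} and \ref{lemma:centermass}) showing that after the leading $k^{n+1}$ terms cancel, the subleading coefficients differ by the \emph{positive} quantity $\tfrac{n}{2(n+1)(n+2)}\vol_n(\Gamma_{-})\,k^{n}$. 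Your proposal to ``adapt Kerner--N\'emethi to bound the first moment'' never identifies this sign, and without it the leading-order cancellation leaves you with nothing; note also that $\mu$ and $\widetilde{p}_g$ carry \emph{different} boundary corrections ($-\tfrac{k^n}{(n+1)(n+2)}\vol_n(\Gamma_-)$ versus $-\tfrac{k^n}{2(n+1)}\vol_n(\Gamma_-)$), so any argument that treats the two boundary terms as comparable error terms to be absorbed will fail.
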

For the proofs, see the corresponding sections in the article, relying in one way or another on the fact that the spectrum for non-degenerate singularities can be described in terms of Newton polyhedra. The case of irreducible curve singularities covers also degenerate cases. 

We remark that other singularities have been tested with the software \textsc{Singular} \cite{DGPS}, allowing instantaneous computation of all involved invariants. In fact, the conjecture was initially based on an extensive numerical investigation, utilizing this software, of the expressions $\mu/6-\widetilde{p}_g$ for curve singularities and $\mu/24-\widetilde{p}_g$ for surface singularities. 
\subsection{} The extension of the conjecture to higher dimensions was motivated by the asymptotic behavior of the holomorphic analytic torsion, which is defined in terms of regularized determinants of Dolbeault--Laplace operators. It is denoted by $\tau$, and it is a strictly positive real number. More precisely, the rate of vanishing or blowing-up of $\tau(\Ocal_{\Xcal_{t}})$ for a degenerating family of projective varieties $\Xcal \to\Delta$ with isolated singularities, is essentially captured by the expression $(-1)^n (\mu/(n+2)! - \widetilde{p}_g)$ (cf. Corollary \ref{cor:asymptoticholomorphictorsion}). This is related to the work of Yoshikawa on the singularities of the Quillen metric \cite{ yoshikawa2, yoshikawa}. See also \cite{cdg} for an interpretation of Yoshikawa's results in terms of intersection theory. 

The weak form of the conjecture is connected to the question whether the function
\begin{equation}\label{eq:analytictorsion}
    X \mapsto \tau(\Ocal_{X})^{(-1)^n}
\end{equation}
 extends continuously over the space of hypersurfaces in $\PBbb^{n+1}$. Such a property would imply the weak form of the conjecture (cf. \textsection\ref{subsec:asymptoticholomorphictorsion} to \textsection\ref{subsec:reformulation}). This is based on the fact that the function in \eqref{eq:analytictorsion}, if continuous, vanishes on the locus of ordinary double points, because the weak form of the conjecture is known then, and any isolated singularity is a limit of ordinary double points. The strong form of the conjecture even allows to predict the order of vanishing along the discriminant locus. This formulation of the problem possibly invites analytic techniques of global nature to study the conjecture, which is of local type.  

\subsection{} By means of a suspension trick, that J. I. Burgos Gil generously shared with us, we prove in Proposition \ref{prop:burgos} that the weak form of the conjecture is a consequence of the Durfee-type conjecture \eqref{eq:Durfee-type}. This leads to the second main result of this article:

\begin{theoremB}
The weak form of the conjecture holds in the following cases:

\begin{enumerate}
    \item Plane curves singularities, ie. when $n=1$ in general.
    \item Quasi-homogeneous singularities in arbitrary dimension.
\end{enumerate}
\end{theoremB}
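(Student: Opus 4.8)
The plan is to deduce Theorem B from Proposition \ref{prop:burgos}, which reduces the weak form in dimension $n$ to the Durfee-type inequality \eqref{eq:Durfee-type} for a family of suspensions, and then to invoke the cases of \eqref{eq:Durfee-type} already in the literature. Given an $n$-dimensional germ $f$, I would consider the stabilized suspensions $g_N = f + x_{n+1}^N$, which define $(n+1)$-dimensional isolated hypersurface singularities. By Thom--Sebastiani one has $\mu(g_N) = (N-1)\mu(f)$, while the spectrum of $g_N$ is the convolution of the spectrum of $f$ with $\{1/N,\dots,(N-1)/N\}$. The mechanism underlying Proposition \ref{prop:burgos} is that, after choosing $N$ divisible by the common denominator of the spectral numbers of $f$, the geometric genus of the suspension satisfies $p_g(g_N) = N\,\widetilde{p}_g(f)$; feeding this into \eqref{eq:Durfee-type} for $g_N$ gives $N\,\widetilde{p}_g(f) < \mu(f)(N-1)/(n+2)!$, and the factor $(N-1)/N < 1$ supplies the strict bound $\widetilde{p}_g(f) < \mu(f)/(n+2)!$. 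Thus it suffices to know that the relevant suspensions $g_N$ lie in a class for which \eqref{eq:Durfee-type} is established; note $(n+1)\geq 2$ for all $n\geq 1$, so \eqref{eq:Durfee-type} is applicable to $g_N$.

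For part (1), let $f$ be a plane curve germ, so $n=1$ and $g_N = f(x_0,x_1) + x_2^N$ is a suspension (cyclic) surface singularity. First I would record that all spectral numbers of a plane curve are rational, so an admissible $N$ exists. The key point is that these suspensions $f(x_0,x_1)+x_2^N$ are precisely the class for which N\'emethi \cite{Nemethi-selecta-1, Nemethi-selecta-2} establishes \eqref{eq:Durfee-type}; applying his theorem to each $g_N$ and running the reduction of Proposition \ref{prop:burgos} yields the weak form for $f$.

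For part (2), let $f$ be quasi-homogeneous of degree $d$ with weights $w_0,\dots,w_n$. Then $g_N = f + x_{n+1}^N$ is again quasi-homogeneous, with additional weight $w_{n+1} = d/N$ and unchanged degree $d$, and it has an isolated singularity for $N\geq 2$. The spectral numbers are rational with controlled denominators, so again an admissible $N$ can be chosen. Since $g_N$ is quasi-homogeneous, \eqref{eq:Durfee-type} holds for it by Yau--Zhang \cite{Yau-Zhang}, and Proposition \ref{prop:burgos} then gives the weak form for $f$.

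Beyond citing Proposition \ref{prop:burgos}, the only genuine work is matching hypotheses: I must check that N\'emethi's results cover every suspension $f(x_0,x_1)+x_2^N$ arising from an arbitrary (possibly reducible) plane curve germ and every admissible $N$, and that Yau--Zhang applies to $g_N$ with the possibly non-integral weight $d/N$. I expect this verification of hypotheses — rather than any new estimate — to be the main obstacle, since the spectral input and the strict-inequality bookkeeping are already packaged inside Proposition \ref{prop:burgos}.
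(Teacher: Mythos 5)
Your overall strategy coincides with the paper's (the Burgos suspension trick of Proposition \ref{prop:burgos}, plus Yau--Zhang for quasi-homogeneous suspensions and N\'emethi for suspensions of plane curves), and part (2) of your argument is essentially the paper's proof. But part (1) has a genuine gap: you assume that N\'emethi's results give the \emph{strict} Durfee-type inequality \eqref{eq:Durfee-type} for every suspension $h = f(x,y) + z^N$, and they do not. What N\'emethi proves (\cite[Theorem 5.1]{Nemethi-selecta-2}) is the non-strict bound $p_{g,h} \leq \mu_h/6$. Since Proposition \ref{prop:burgos} is an \emph{equivalence} between the strict weak conjecture for $f$ and the strict inequality \eqref{eq:Durfee-type} for $h$, feeding in a non-strict input only yields $\widetilde{p}_g \leq \mu/(n+2)!$, which is not the weak form. (Your claim that the factor $(N-1)/N < 1$ "supplies the strict bound" rests on the relation $p_g(g_N) = N\,\widetilde{p}_g(f)$, which is off by one: the correct relation, from Lemma \ref{lemma:burgos}, is $p_g(g_N) = (N-1)\,\widetilde{p}_g(f)$ under the hypothesis $T_s^{N-1}=1$, and then the reduction is an exact equivalence with no slack to exploit.) Closing this gap is precisely where the paper does its real work: via N\'emethi's signature relation $\sigma + \mu_h = 4p_{g,h}$ one gets strictness ($\sigma < -\mu_h/3$) unless $f$ is of type $A_n$, $A_{2n,2m}$, $D_{2n+3}$ or $E_6$; the types $A_n$, $D_{2n+3}$, $E_6$ are quasi-homogeneous and fall under Theorem \ref{thm:quasi-homogeneous}; and the family $A_{2n,2m}$ with $n+m$ small escapes N\'emethi's bound entirely, forcing a separate direct computation of $\widetilde{p}_{g,f}$ and $\mu_f$ via the Newton-polyhedron formulas \eqref{eq:Kouch} and \eqref{eq:spectral-genus-saito}, yielding $\mu_f/6 - \widetilde{p}_{g,f} \geq 8/15$. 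None of this is routine hypothesis-matching; it is an additional argument your proposal does not contain.

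A second, smaller omission in part (1): you must choose a single $N$ satisfying both N\'emethi's hypothesis ($N$ coprime to the multiplicities of the exceptional divisors of the minimal embedded resolution of $f=0$) and the hypothesis of Proposition \ref{prop:burgos} ($T_s^{N-1}=1$). Rationality of the spectral numbers alone does not reconcile these. The paper does it by taking $k$ to be a multiple of the least common multiple of the exceptional multiplicities: then $T_s^k = 1$ by A'Campo's theorem \cite[Th\'eor\`eme 3]{ACampo}, and $N = k+1$ is automatically coprime to every multiplicity.
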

The proof, presented in \textsection\ref{subsec:proof-thmB}, is deduced from the work of N\'emethi on the Durfee conjecture for suspensions of curves, and the work of Yau--Zhang for quasi-homogeneous singularities, both recalled in \textsection\ref{subsec:Durfee-intro} above. 

As for the strong form of the conjecture, it would be interesting to relate it to a Durfee-type bound as well, but we were not able to derive it from the existing refinements, such as those in \cite{Kerner-Nemethi-2, Xu-Yau-fourdimensionaltetrahedra, Yau-Zhang}.

\subsection{} Exploiting the relationship between our conjecture and the asymptotic behavior of the analytic torsion stated in Corollary \ref{cor:asymptoticholomorphictorsion}, Theorem A and Theorem B have the following concrete application to determinants of Laplacians, which appears to be new.
\begin{theoremC}
Let $\Xcal \to\Delta$ a degeneration of compact Riemann surfaces, with isolated singularities in the central fiber $\Xcal_{0}$ and with $\Xcal$ smooth Kähler. Then, the determinant of the Laplace--Beltrami operator on  $\Xcal_{t}$ satisfies
\begin{displaymath}
    \det\Delta_{\Xcal_{t}}\to 0,\quad\text{as}\quad t\to 0.
\end{displaymath}
Furthermore, if the singularities of $\Xcal_{0}$ are locally irreducible or quasi-homogeneous, then for every $\varepsilon>0$  we have
\begin{displaymath}
    \det\Delta_{\Xcal_{t}}=O\left(|t|^{m/3-\varepsilon}\right),
\end{displaymath}
where $m$ is the number of singular points in $\Xcal_{0}$.
\end{theoremC}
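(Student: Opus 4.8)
The plan is to convert the determinant of the Laplace--Beltrami operator on the smooth fibers into the holomorphic analytic torsion $\tau(\Ocal_{\Xcal_t})$ of the structure sheaf, and then to feed in the asymptotics of Corollary \ref{cor:asymptoticholomorphictorsion} together with the genus bounds of Theorems A and B. Throughout, the operator $\Delta_{\Xcal_t}$ has a one-dimensional kernel (the constants), so $\det\Delta_{\Xcal_t}$ must be read as the zeta-regularized determinant ${\det}'\Delta_{\Xcal_t}$ of the positive eigenvalues.

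First I would set up the dictionary between ${\det}'\Delta_{\Xcal_t}$ and the torsion. On a compact Riemann surface the only nonzero contribution to the torsion of $\Ocal$ comes from $(0,1)$-forms, so with the paper's sign convention $\tau(\Ocal_{\Xcal_t})=({\det}'\Delta_{\db}^{0,1})^{-1/2}$; note that the pinching eigenvalues force ${\det}'\Delta_{\db}^{0,1}\to 0$, consistent with $\tau^{(-1)^n}=\tau^{-1}\to 0$ at the singular locus. The Kähler identity gives $\Delta_{\db}^{0,0}=\tfrac12\Delta$, while $\db$ identifies the nonzero spectra of $\Delta_{\db}^{0,0}$ and $\Delta_{\db}^{0,1}$, so ${\det}'\Delta_{\db}^{0,1}={\det}'\Delta_{\db}^{0,0}$. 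The scaling law of zeta-determinants then yields ${\det}'\Delta_{\db}^{0,0}=2^{-\zeta_\Delta(0)}\,{\det}'\Delta$, where $\zeta_\Delta(0)$, by the short-time heat expansion and Gauss--Bonnet, depends only on the genus $g$ of the fibers. Since $g$ is constant for $t\neq 0$, this produces a constant $C_g$ with
\[
  {\det}'\Delta_{\Xcal_t}=C_g\,\tau(\Ocal_{\Xcal_t})^{-2}.
\]
The exponent $-2$ here is exactly what later doubles the singularity contribution.

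Next I would specialize Corollary \ref{cor:asymptoticholomorphictorsion} to $n=1$, so that $(n+2)!=6$. Each of the $m$ isolated singularities of $\Xcal_0$ is an isolated plane-curve singularity, and the corollary controls the blow-up of $\tau(\Ocal_{\Xcal_t})$ through the sum over singular points $p$ of $(-1)(\mu_p/6-\widetilde{p}_{g,p})$; concretely $\tau(\Ocal_{\Xcal_t})$ grows like $|t|^{-\sum_p(\mu_p/6-\widetilde{p}_{g,p})}$ up to subleading factors. Combined with the identity above,
\[
  {\det}'\Delta_{\Xcal_t}\ \asymp\ |t|^{\,2\sum_p(\mu_p/6-\widetilde{p}_{g,p})},
\]
again up to subleading factors. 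For the first assertion, every isolated curve singularity is a plane-curve singularity, so the weak form holds by Theorem B, item (1); hence $\mu_p/6-\widetilde{p}_{g,p}>0$ for each $p$, the exponent is strictly positive, and ${\det}'\Delta_{\Xcal_t}\to 0$. For the refined estimate, if the singularities are locally irreducible or quasi-homogeneous then the strong form applies by Theorem A, items (ii) and (iii): from $\widetilde{p}_{g,p}\le(\mu_p-1)/6$ we get $\mu_p/6-\widetilde{p}_{g,p}\ge 1/6$ at each of the $m$ points, so the exponent is at least $2\cdot m/6=m/3$, giving ${\det}'\Delta_{\Xcal_t}=O(|t|^{m/3-\varepsilon})$ for every $\varepsilon>0$.

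The main obstacle is turning the phrase ``essentially captured'' in Corollary \ref{cor:asymptoticholomorphictorsion} into a genuine power bound: one must verify that the subleading contributions (possible powers of $\log|t|$, and the volume and $L^2$ factors coming from harmonic forms, which remain bounded and bounded away from zero precisely because $\Xcal$ is smooth Kähler) perturb the leading exponent by an arbitrarily small amount, which is what the loss of $\varepsilon$ records and is also why the clean first assertion needs no hypothesis beyond the weak form. A secondary point requiring care is the uniform validity across the family of the torsion--determinant identity of the second paragraph, together with the bookkeeping of the sign convention for $\tau$ and of the universal constant $C_g$; once these are pinned down, the factor $2$ in the exponent and the bound $m/3$ follow mechanically.
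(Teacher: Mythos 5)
Your proposal takes exactly the paper's route: Theorem C is precisely the concatenation of the curve remark following Corollary \ref{cor:asymptoticholomorphictorsion} (the dictionary between $\tau(\Ocal_{\Xcal_t})$ and the Laplace--Beltrami determinant), the weak form for plane curves (Theorem B, item (1)) for the first assertion, and the strong form for quasi-homogeneous and irreducible curve singularities (Theorem A, items (ii) and (iii), i.e.\ Theorems \ref{thm:quasi-homogeneous} and \ref{thm:irreducible-curve}) for the second. The structure of the argument and the final exponent $m/3$ are correct.

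One bookkeeping issue deserves flagging: two of your intermediate statements contradict the paper's conventions, and they happen to cancel. With the paper's definition in \textsection\ref{subsec:analytictorsion}, $\tau(X,E)=\prod_q(\det\Delta_{\db}^{0,q})^{(-1)^qq}$, so for a curve $\tau(\Ocal_{\Xcal_t})^{-1}={\det}'\Delta_{\db}^{0,1}$ with exponent $1$, not $\tau=({\det}'\Delta_{\db}^{0,1})^{-1/2}$ as you assert; the identity with exponent $1$ is stated verbatim in the remark after Corollary \ref{cor:asymptoticholomorphictorsion}, and your $-1/2$ is the Ray--Singer normalization, not the paper's. Correspondingly, the corollary's asymptotic is written against $\log|t|^{2}=2\log|t|$, so with the paper's $\tau$ one has $\tau(\Ocal_{\Xcal_t})\asymp|t|^{-2\sum_p(\mu_p/6-\widetilde{p}_{g,p})}$ up to the $\log\log$ factor, not $|t|^{-\sum_p(\mu_p/6-\widetilde{p}_{g,p})}$ as you wrote. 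Each deviation is off by a factor of $2$ in the exponent, in opposite directions (equivalently, both of your statements are true of $\tau^{1/2}$ rather than of $\tau$), so your conclusion ${\det}'\Delta_{\Xcal_t}\asymp|t|^{2\sum_p(\mu_p/6-\widetilde{p}_{g,p})}(\log|t|^{-1})^{\beta}$, and hence the bounds $\det\Delta_{\Xcal_t}\to 0$ and $O(|t|^{m/3-\varepsilon})$, survive; but the factor $2$ genuinely comes from $\log|t|^{2}$, not from squaring the torsion. The remaining ingredients of your write-up --- the spectral identities ${\det}'\Delta^{0,1}_{\db}={\det}'\Delta^{0,0}_{\db}$ and $\Delta^{0,0}_{\db}=\tfrac12\Delta$, the topological nature of the resulting constant, the absorption of the $\log\log|t|^{-1}$ term (with coefficient $\beta\geq 0$) into the loss of $\varepsilon$, and the observation that for curves the asymptotics are valid for an arbitrary Kähler metric on the smooth total space, which is what allows the hypothesis of Theorem C to omit any rationality condition on the Kähler class --- all match the paper.
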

The strong form of the conjecture suggests that the second part of Theorem C should hold for general projective degenerations with isolated singularities. 

For degenerations of compact hyperbolic Riemann surfaces, an analogue of Theorem C above is known too. This is contained in the work of Wolpert on degenerations of Selberg zeta functions \cite{Wolpert:Selberg}, and can also be derived from  \cite{GFM:ARR}. In this setting, it is enough to consider stable degenerations, in which case the singular fiber is endowed with a complete metric. This is somewhat opposite to the situation treated in Theorem C, for which it seems not possible to perform a semi-stable reduction, and moreover the singular fiber carries an incomplete metric. 

\subsection{} In future work we aim to apply and extend part of this discussion to the asymptotic behavior of the BCOV invariant of Calabi--Yau varieties, introduced in dimension 3 in \cite{FLY} and in general dimension in \cite{cdg2}. Some instances of such asymptotics were used to establish new cases of genus one mirror symmetry in \cite{cdg3}. Further qualitative discussions of the same type could provide even further cases and new insights into mirror symmetry phenomena.



\section{Invariants of isolated singularties}

In this section, to set up notation and for the convenience of the reader, we recall some classical invariants of isolated hypersurface singularities.  We will be considering the germ of a holomorphic function $f: (\CBbb^{n+1}, 0) \to (\CBbb, 0)$ defining an isolated hypersurface singularity at the origin, and we will suppose that $n \geq 1$. Somewhat abusively, sometimes we will simply write $f=0$.

\subsection{} It is convenient to study these isolated hypersurface singularities through the cohomology of the Milnor fiber $H^n(\Mil_f)$. Note that the (reduced) cohomology of the Milnor fiber vanishes in other degrees. Its dimension over $\CBbb$ is the  Milnor number of the singularity: 

\begin{displaymath}
    \mu = \mu_f=\dim \CBbb \{x_0, \ldots, x_n\}/\left( \partial f/\partial x_0, \ldots, \partial f/\partial x_n \right).
\end{displaymath}
The cohomology group $H^n(\Mil_f)$ is equipped with a canonical mixed Hodge structure by \cite{Steenbrink-mixedonvanishing}. We denote the corresponding Hodge and weight filtrations by $F$ and $W$, respectively. Moreover, the semi-simple part of the monodromy, denoted by $T_s$, acts on this mixed Hodge, in the sense that it preserves the weight and Hodge filtrations \cite[Theorem 4.1]{Steenbrink-mixedonvanishing}. 

\subsection{}\label{section:genus} Consider the genus of the singularity $V = \{f=0\}$:
\begin{equation}\label{eq:genus}
    p_g = \dim \Gr^n_F H^n(\Mil_f) =    \begin{cases} \dim R^{n-1} \pi_\ast \Ocal_{X} & \mbox{if } n > 1, \\ \dim \pi_{\ast} \Ocal_X/\Ocal_V & \mbox{if } n=1, \end{cases}
\end{equation}
where $\pi: X \to V$ is a desingularization. The equality of the various quantities is proven in \cite[Theorem 1]{MSaitogenus} and \cite[Proposition 2.13]{SteenbrinkMixedAssociated}. In particular, $p_{g}=0$ if $V$ has rational singularities. In our setting, since $V$ is Gorenstein, this is equivalent to saying that $V$ has canonical singularities \cite{Elkik:singularites-canoniques}.

Recall from the introduction that we likewise define the \emph{spectral genus of the singularity}, as the expression 
\begin{equation}\label{eq:definition-spectral-genus}
    \widetilde{p}_{g}=  \sum \lambda_j,
\end{equation}
where the sum is over all $0 < \lambda_j < 1$ such that $\exp(2\pi i \lambda_j)$ is an eigenvalue of $T_s$ acting on $\Gr^n_F H^n(\Mil_f)$. More compactly, \eqref{eq:definition-spectral-genus} can be recast as 
\begin{equation}\label{eq:spectralgenus}
    \widetilde{p}_g = \frac{1}{2\pi i } \tr \left(\log (T_s) \mid { \Gr^n_F H^n(\Mil_f) }\right),
\end{equation}
where $\log$ is the branch of the logarithm whose imaginary part lies in $[0,2\pi)$. 

\subsection{}\label{subsec:conventions-spectrum} We next elaborate on the relationship between the spectral genus and the spectrum of an isolated hypersurface singularity. We first follow Steenbrink's presentation in \cite[Section 2]{Steenbrink-semicontinuity} and \cite[Section 1]{Steenbrink-asterisque}, and we adopt his conventions. See also \cite[Section 12.1.3]{Peters-Steenbrink}. The spectral numbers are associated to the triple $(H^{n}(\Mil_{f}), F^{\bullet}, T_{s})$. These are rational numbers $\alpha$, given with multiplicities, uniquely determined by the following conditions:
\begin{enumerate}  
    \item $\exp(-2\pi i\alpha)$ is an eigenvalue of $T_{s}$ acting on $\Gr^{p}_{F}H^{n}(\Mil_{f})$, for some $p=0,\ldots,n$. 
    \item For $\alpha$ and $p$ as in the first point, $p=[n-\alpha]$. Equivalently, we take $n-p-1<\alpha\leq n-p$.
    \item The multiplicity of $\alpha$ is the multiplicity of $\exp(-2\pi i\alpha)$.
\end{enumerate}
Denote by $\lbrace \alpha_{j}\rbrace_{j=1,\ldots, \mu}$ the collection of spectral numbers, with multiplicities. This collection is invariant under $\alpha\mapsto n-1-\alpha$. The spectral numbers hence belong to the interval $(-1,n)$. With this understood, we see that the spectral genus $\widetilde{p}_{g}$ can be expressed as
\begin{displaymath}
    \widetilde{p}_{g}=-\sum_{\alpha_{j}< 0}\alpha_{j}.
\end{displaymath}
By the symmetry of the spectral numbers with respect to $\alpha\mapsto n-1-\alpha$, we can equivalently write
\begin{equation}\label{eq:spectral-genus-Steenbrink-GrF0}
    \widetilde{p}_{g}=\sum_{j}\lambda_{j}^{\prime},
\end{equation}
where the sum is now over rationals $0<\lambda_{j}^{\prime}<1$ such that $\exp(-2\pi i\lambda_{j}^{\prime})$ is an eigenvalue of $T_{s}$ acting on $\Gr_{F}^{0}H^{n}(\Mil_{f})$. This is to be compared with \eqref{eq:definition-spectral-genus}.

Some authors shift the spectral numbers by one, so that they are given in the form $\alpha^{\prime}_{j}=\alpha_{j}+1$ and belong to $(0,n+1)$. With this convention, we have
\begin{equation}\label{eq:spgenusMSaito}
    \widetilde{p}_{g}=\sum_{\alpha_{j}^{\prime}< 1}(1-\alpha_{j}^{\prime}).
\end{equation}
This convention appears in the works of M. Saito \cite{MSaito-Newton, MSaito:Hertling}, applied below. Depending on the context, one convention may be more adapted than the other, and we will use both.

\subsection{} The Milnor number $\mu$ and the geometric genus $p_{g}$ depend only on the fiber $V = f^{-1}(0)$, but not on the chosen deformation \cite[Theorem 2.9]{Steenbrink-semicontinuity}. The spectral genus depends on the whole germ $f$. Nevertheless, the spectrum is constant in any deformation of isolated hypersurface singularities with constant $\mu$ \cite[Theorem 2.8]{Steenbrink-semicontinuity}, and hence so is the spectral genus.

\subsection{}\label{subsec:various-spectral-preliminaries} For later use, we recall the definition of the spectral polynomial associated to $f$, which for the spectral numbers taken in $(0,n+1)$ is given by
\begin{equation}\label{eq:spectral-polynomial}
    \mathrm{Sp}_{f}(T)=\sum_{j}T^{\alpha_{j}^{\prime}}\in\ZBbb[T^{\QBbb}].
\end{equation}
We also recall the Thom--Sebastiani property for the spectral numbers \cite[Theorem 7.3]{Varchenko-asymptotic}. If $h\colon (\CBbb^{m+1},0)\to (\CBbb,0)$ defines another isolated hypersurface singularity, with spectral numbers $\beta_{j}^{\prime}$ taken in $(0,m+1)$, then the spectral numbers of $f(x_{0},\ldots,x_{n})+h(y_{0},\ldots,y_{m})$ are given by the sums
\begin{displaymath}
    \alpha_{i}^{\prime}+\beta_{j}^{\prime},\quad\text{for}\ i=1,\ldots,\mu_{f}\ \text{and}\ j=1,\ldots,\mu_{h},
\end{displaymath}
which hence belong to $(0,m+n+2)$.

\subsection{} 

We next review some basic facts on the local and global theories of isolated singularities, in connection with degenerations of Hodge structures. Consider a connected complex manifold $\Xcal$ of dimension $n+1\geq 2$, and a flat, projective morphism $g: \Xcal \to \Delta$, which is a holomorphic submersion outside the origin. We will suppose that the central fiber has at most isolated singularities. If $x_i \in \Xcal_0$ is such a singular point, the germ $(\Xcal, x_i) \to (\Delta, 0)$ is isomorphic to some $f_i: (\CBbb^{n+1}, 0) \to (\CBbb, 0)$ and admits a Milnor fiber $\Mil_{f_i}$. In this setting, we denote by $\mu$ and $\widetilde{p}_{g}$ the sum of the Milnor numbers and spectral genera of the singularities $x_{i}$, respectively. Therefore, there is a decomposition
\begin{displaymath}
    \frac{\mu}{(n+2)!} - \widetilde{p}_g = \sum_i  \left(\frac{\mu_i}{(n+2)!} - \widetilde{p}_{g,i}\right).
\end{displaymath}

\subsection{} The cohomologies of the Milnor fibers of the singularities $x_{i}$ sit in an exact sequence 
\begin{equation}\label{Milnorsequence}
    0 \to H^n(\Xcal_0) \to H^n(\Xcal_t) \to \bigoplus_i H^n(\Mil_{f_i}) \to H^{n+1}(\Xcal_0) \to H^{n+1}(\Xcal_t) \to 0,
\end{equation}
for fixed $t\neq 0$, and for $q \neq n, n+1$ there is an isomorphism
\begin{equation}\label{Milnorsequence1}
    H^q(\Xcal_0) \simeq H^q(\Xcal_t).
\end{equation}
When the $H^k(\Xcal_0)$ are given the canonical mixed Hodge structures of Deligne, and the $H^k(\Xcal_t)$ are given the limit mixed Hodge structures of Schmid, then \eqref{Milnorsequence} is moreover an exact sequence of mixed Hodge structures. It is equivariant with respect to the semi-simple part of the monodromy, $T_s$. For details, we refer to Steenbrink \cite[Section 3.3]{Steenbrink-mixedonvanishing} and Navarro Aznar \cite[Section 14]{Navarro}. Below, we will denote the limit mixed Hodge structure in degree $k$ simply by $(H^{k}_{\lim}, F^{\bullet}, W_{\bullet})$. 

\subsection{}\label{subsec:degeneration} If we start with the germ of an isolated singularity $f : (\CBbb^{n+1},0) \to (\CBbb,0)$, it admits a \emph{good compactification} $g:  \Xcal \to \Delta$. By this, we mean:
\begin{enumerate}
    \item\label{item:degeneration-1} We are given a complex manifold $\Xcal$, and a flat, projective morphism $g: \Xcal \to \Delta$, which is a submersion outside the origin. We refer to $\Xcal\to\Delta$ as a \emph{degeneration} of projective varieties, or simply a projective degeneration.
    \item The special fiber $\Xcal_0$ has only one isolated singularity $x$.
    \item There is an open subset $U$ of $x$ such that $(g|_U,x) \to (\Delta,0)$ is isomorphic to $f$.

\end{enumerate}
By an argument of Brieskorn  \cite[Section {1.1}]{Brieskorn}, such a compactification exists, and one can further suppose that it is given by a family of hypersurfaces in $\PBbb^{n+1}$. This moreover shows that properties such as the positivity of $\frac{\mu}{(n+2)!} - \widetilde{p}_g$ can be studied equivalently for proper families or in the local setting. This will be used to reformulate our conjecture in Proposition \ref{prop:reformulation} below.

\section{Analytic torsion of $\Ocal_X$}\label{sec:analytictorsion}

In this section, we discuss the asymptotic behavior of the holomorphic analytic torsion of the sheaf of holomorphic functions, for a degeneration of projective varieties, and relate it to our conjecture on the spectral genus.

\subsection{} Let $X$ be a compact analytic space and suppose that we are given a holomorphic vector bundle $E$ on $X$. The determinant of the cohomology is the line 
\begin{equation}\label{eq:detcohpoint}
    \lambda(E) = \bigotimes_q \det H^q(X,E)^{(-1)^q}.
\end{equation}
More generally, for a flat proper morphism of complex analytic spaces $\Xcal  \to S$ over a complex analytic manifold $S$, and a vector bundle $E$ on $\Xcal$, there is a line bundle $\lambda(E)$ on $S$, whose fibers over $s \in S$ are given by $\lambda(E|_{\Xcal_s})$, see \cite[Section 4.1]{bismutbost}. It is also referred to as the determinant of the cohomology. If $\Xcal \to S$ is the analytification of an algebraic family, this construction is the analytification of the Knudsen--Mumford determinant \cite{KnudsenMumford}. It has a natural grading, which for the purposes of this article, together with various sign issues, we can ignore. 

\subsection{}\label{subsec:analytictorsion} Let $X$ be a compact Kähler manifold. The analytic torsion of a hermitian vector bundle $E$ on $X$ is defined as a weighted alternating product of determinants of Laplacians, namely
\begin{displaymath}
    \tau(X,E) = \exp \left( \sum (-1)^{q+1} q \zeta'_{0,q}(0)\right)=\prod_{q} (\det\Delta_{\ov{\partial}}^{0,q})^{(-1)^{q}q}.
\end{displaymath}
Here, for $\Real(s)\gg 0$, $\zeta_{0,q}(s)$ is given by
\begin{displaymath}
    \zeta_{0,q}(s) = \sum \frac{1}{\mu_j^s},
\end{displaymath}
whose sum runs over positive eigenvalues $\mu_j$ of the Dolbeault-Laplacian $\Delta_{\overline{\partial}}^{0,q}$ acting on $A^{0,q}(E)$. It depends on both the Kähler metric on $X$ and the hermitian metric on $E$.

\subsection{}\label{sec:Quillendeg} Let $X$ be a Kähler manifold and $E$ a hermitian vector bundle on $X$. The determinant of the cohomology in \eqref{eq:detcohpoint} is equipped with two metrics, the $L^2$-metric and the Quillen metric. The $L^2$-metric  $h_{L^2}$ on $\lambda(E)$ is defined by representing the Dolbeault cohomology groups $H^q(X,E)$ by harmonic forms, and using the natural metric from Hodge theory on $A^{0,q}(E)$-forms. The Quillen metric on $\lambda(E)$ is defined by 
\begin{equation}\label{eq:Quillenmetric}
   h_Q =  h_{L^2} \cdot \tau(X,E),
\end{equation}
where $\tau(X,E)$ is the analytic torsion described in \eqref{subsec:analytictorsion}. 

For a family of Kähler manifolds $\Xcal \to S$ and a hermitian vector bundle $E$, the Quillen metric varies smoothly. If the dimensions $s \mapsto h^q(\Xcal_s, E|_{\Xcal_s})$ are constant on $S$ for all $q$, the $L^2$-metric is also smooth. 

\subsection{}\label{subsec:degeneration-Quillen} Let $\Xcal \to \Delta$ be a degeneration of projective varieties of dimension $n\geq 1$ as in \textsection\ref{subsec:degeneration} \eqref{item:degeneration-1}, with isolated singularities in the central fiber $\Xcal_0$. Suppose that $\Xcal$ is equipped with a K\"ahler metric, and that we are given a hermitian vector bundle $E$ on $\Xcal,$ of rank $e$. Let $\sigma$ be a holomorphic trivialization of the determinant of the cohomology $\lambda(E)$. By the main results of \cite{yoshikawa2, yoshikawa}, we have
\begin{equation}\label{eq:asymptoticQuillen}
    \log\|\sigma \|_{Q}^{2} = \frac{(-1)^{n} } {(n+2)!} \mu \cdot e \cdot  \log|t|^{2} + O(1),\quad\text{as}\quad t\to 0.
\end{equation}

\subsection{}\label{sec:L2deg} Suppose now that $\Xcal\to\Delta$ is as in \textsection\ref{subsec:degeneration-Quillen}, but $\Xcal$ is actually equipped with a Kähler metric, whose associated Kähler form is rational when restricted to smooth fibers. Endow the sheaf of holomorphic functions $\Ocal_{\Xcal}$ with the trivial hermitian metric, induced by the absolute value. 

\begin{proposition}\label{prop:L2deg}
    Let $\sigma$ be a trivialization of $\lambda(\Ocal_{\Xcal})$. Then, for small $t$, 
    \begin{displaymath}
        \log\|\sigma \|^2_{L^2} = (-1)^n \widetilde{p}_g \log|t|^2 +(-1)^{n+1} \beta\log\log|t|^{-1}+O(1),
    \end{displaymath}
where $\beta\geq 0$ is an integer determined by the limit mixed Hodge structure $H^{n}_{\lim}$, and given by
\begin{equation}\label{eq:beta}
    \beta=\sum_{r=1}^{n}r\dim F^{n}\Gr^{W}_{n+r}H^{n}_{\lim}.
\end{equation}
In particular, if $\Xcal_{0}$ has canonical singularities, then $\log\|\sigma\|^{2}_{L^{2}}=O(1)$.
\end{proposition}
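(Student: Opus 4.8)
The plan is to collapse the problem to a single cohomological degree and then extract the two asymptotic terms from Schmid's norm estimates for the degenerating Hodge structure $H^n_{\lim}$. First I would note that $H^q(\Xcal_t,\Ocal_{\Xcal_t})=H^{0,q}(\Xcal_t)=\Gr^0_FH^q(\Xcal_t)$, which vanishes for $q>n$ as the fibres have dimension $n$, so that $\lambda(\Ocal_{\Xcal})=\bigotimes_{q=0}^n\det H^q(\Xcal_t,\Ocal_{\Xcal_t})^{(-1)^q}$. For $q<n$ (hence $q\neq n,n+1$) the monodromy on $H^q(\Xcal_t)$ is trivial and the variation extends across $0$ by \eqref{Milnorsequence1}, so these $L^2$-metrics are continuous at $t=0$ and contribute $O(1)$. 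Writing $\sigma=\bigotimes_q\sigma_q^{(-1)^q}$ for holomorphic frames $\sigma_q$, this leaves
\[
\log\|\sigma\|^2_{L^2}=(-1)^n\log\|\sigma_n\|^2_{L^2}+O(1),
\]
where the $L^2$-metric on $\det\Gr^0_FH^n(\Xcal_t)$ agrees, up to a bounded factor coming from the rational (hence fibrewise bounded) Kähler form, with the determinant of the polarization Hodge metric.

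Next I would compute $\log\|\sigma_n\|^2_{L^2}$ from the weight-$n$ polarized variation $H^n(\Xcal_t)$ over $\Delta^\ast$, with semisimple monodromy $T_s$, nilpotent $N$, and limit mixed Hodge structure $(H^n_{\lim},F,W)$, writing its Deligne splitting $H^n_{\lim}=\bigoplus_{p,q}I^{p,q}$, on each summand of which $T_s$ acts. Choosing a frame of the Deligne canonical extension adapted to $F$, $W$ and the $T_s$-eigenspaces, a rank-one computation shows that a frame vector whose limit lies in $I^{p,q}$, with $T_s$-eigenvalue $e^{-2\pi i\alpha}$ and $\alpha\in[0,1)$, has Hodge norm
\[
\log\|e(t)\|^2=\alpha\log|t|^2+(p+q-n)\log\log|t|^{-1}+O(1),
\]
the first term being the $|t|^{2\alpha}$ factor of the canonical extension and the second Schmid's estimate for $\Gr^W_{p+q}$. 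Taking the determinant over a basis of $\Gr^0_FH^n_{\lim}=\bigoplus_qI^{0,q}$ gives
\[
\log\|\sigma_n\|^2_{L^2}=\Big(\sum_i\alpha_i\Big)\log|t|^2+\Big(\sum_q(q-n)\dim I^{0,q}\Big)\log\log|t|^{-1}+O(1).
\]

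I would then identify these coefficients with $\widetilde{p}_g$ and $-\beta$. For the leading term, the sequence \eqref{Milnorsequence} is $T_s$-equivariant and $T_s$ acts trivially on $H^\ast(\Xcal_0)$, so on the generalized eigenspaces for eigenvalue $\neq1$ it induces an isomorphism $\Gr^0_FH^n_{\lim}\simeq\bigoplus_i\Gr^0_FH^n(\Mil_{f_i})$; as eigenvalue $1$ contributes $\alpha_i=0$, equation \eqref{eq:spectral-genus-Steenbrink-GrF0} gives $\sum_i\alpha_i=\sum_i\widetilde{p}_{g,i}=\widetilde{p}_g$. For the subleading term, complex conjugation gives $\dim I^{0,q}=\dim I^{q,0}$ and the Lefschetz isomorphism $N^r\colon\Gr^W_{n+r}\xrightarrow{\sim}\Gr^W_{n-r}$ gives $\dim I^{n,r}=\dim I^{n-r,0}$; since $F^n\Gr^W_{n+r}H^n_{\lim}=I^{n,r}$, this yields $\sum_q(q-n)\dim I^{0,q}=-\sum_{r\geq1}r\dim I^{n,r}=-\beta$, manifestly $\leq 0$, so $\beta\geq0$. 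Combining gives the stated expansion. For the final assertion, canonical (equivalently rational) singularities force $\Gr^n_FH^n(\Mil_{f_i})=0$, whence $\widetilde{p}_g=0$; moreover the image of $H^n(\Xcal_0)$, having weights $\leq n$, then fills $\Gr^n_FH^n_{\lim}$, so $I^{n,r}=0$ for $r\geq1$ and $\beta=0$, giving $O(1)$.

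The main obstacle I expect is the norm estimate underlying the second step: justifying that, in a frame adapted to $(F,W,T_s)$, the logarithm of the determinant of the Hodge Gram matrix is the sum of the diagonal estimates up to $O(1)$ — that is, that the off-diagonal entries and the passage from the ambient Hodge norm to the quotient metric on $\Gr^0_F$ are controlled — which requires the $\mathrm{SL}_2$-orbit theorem in the non-unipotent setting rather than the naive rank-one model. The remaining steps are bookkeeping with the exact sequence \eqref{Milnorsequence} and the standard conjugation and Lefschetz symmetries of the polarized limit mixed Hodge structure.
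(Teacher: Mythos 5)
Your coefficient identifications (the Milnor sequence and $T_s$-equivariance for the $\log|t|^2$ term; conjugation symmetry plus the isomorphisms $N^r\colon\Gr^W_{n+r}\simrightarrow\Gr^W_{n-r}$ for the $\log\log$ term) match the paper's, and your treatment of the final assertion is in fact a nice simplification: the paper invokes C.-L. Wang's theorem \cite{Wang:MRL} and must then discuss its Gorenstein and irreducibility hypotheses, whereas your route --- rational singularities kill $\Gr^n_F H^n(\Mil_{f_i})$, so by exactness of $\Gr_F$ on the sequence \eqref{Milnorsequence} the space $F^nH^n_{\lim}$ is the image of $F^nH^n(\Xcal_0)$, which by strictness lands in $W_nH^n_{\lim}$, forcing $F^n\Gr^W_{n+r}H^n_{\lim}=0$ for $r\geq1$ --- is correct and self-contained.

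The gap is in the analytic core, and it is not only the one you flag. The quantity to be estimated is the norm of a trivialization $\sigma$ of $\lambda(\Ocal_{\Xcal})$, i.e.\ of the Knudsen--Mumford determinant of the coherent sheaves $R^q\pi_*\Ocal_{\Xcal}$ on $\Delta$: which frames are ``holomorphic and non-vanishing across $t=0$'' is dictated by these coherent sheaves, not by the flat structure on $\Delta^{\ast}$. Your rank-one computation silently identifies this lattice with $\Gr^0_F$ of the Deligne (lower) canonical extension --- that identification is precisely what produces the coefficient $\alpha$ in $\alpha\log|t|^2$, and replacing the lattice by a different coherent extension shifts each such coefficient by an integer. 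The same issue infects your claim that the $q<n$ factors are $O(1)$: continuity of the Hodge metric on the extended VHS is not enough; one needs $R^q\pi_*\Ocal_{\Xcal}$ itself to agree with that extension near $t=0$ (equivalently, no jumping/torsion contribution at $0$), which is not automatic when $\Xcal_0$ is singular. The comparison between the Knudsen--Mumford lattice and the canonical extension is a theorem (due to Steenbrink) for degenerations whose central fiber has normal crossings, and this is exactly why the paper's proof begins by blowing up: since $\Xcal$ is smooth, $\lambda(\Ocal_{\Xcal})$ is unchanged under modifications of the central fiber, so one may assume $\Xcal_0$ is normal crossings and then quote \cite[Theorem C]{cdg2}, which packages both the lattice comparison and the Schmid/$\mathrm{SL}_2$-orbit Gram-matrix estimates in adapted frames. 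So your proposal is missing two inputs, not one: the normal crossings reduction that pins down the holomorphic structure of $\lambda(\Ocal_{\Xcal})$, and the norm estimates you acknowledged; without the first, the leading coefficient in your expansion is simply not determined by the argument given.
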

\begin{proof}
    First of all, the determinant of the cohomology of the sheaf of holomorphic functions is invariant under blowups in the special fiber, because the total space is smooth and hence has only rational singularities. We can hence suppose the central fiber of $\Xcal \to \Delta$ has normal crossings. In this case, by \cite[Theorem C]{cdg2} we find that 
    \begin{displaymath}
        \log \|\sigma \|_{L^2}^{2} = \left(\sum_{q=0}^{n} (-1)^q \alpha^{0,q} \right) \log|t|^{2} + \left(\sum_{q=0}^{n} (-1)^{q}\beta^{0,q}\right)\log\log|t|^{-1}+O(1),
    \end{displaymath}
    where $\alpha^{0,q}$ is defined as minus (the lower extension of) the logarithm of the semi-simple part of the monodromy acting on $\Gr_{F}^{0}H^{q}_{\lim}$, and
    \begin{displaymath}
        \beta^{p,q}=\sum_{r=-k}^{k}r\dim\Gr_{F}^{p}\Gr^{W}_{k+r}H^{k}_{\lim},\quad\text{with}\quad k=p+q.
    \end{displaymath}
    One can infer directly from \eqref{eq:spectral-genus-Steenbrink-GrF0} and the exact sequence \eqref{Milnorsequence}, that $\alpha^{0,n}=\widetilde{p}_g$, since the monodromy acts trivially on $H^{q}(\Xcal_{0})$ for any $q$. Moreover, $\alpha^{0,q} = 0$ if $q<n$. This latter fact follows from \eqref{Milnorsequence} and \eqref{Milnorsequence1}, paired again with the fact that $H^{q}(\Xcal_0)$ has trivial monodromy for any $q$. Similarly, since by \eqref{Milnorsequence1} the mixed Hodge structure $H^{q}_{\lim}$ is pure for $q<n$, we see that $\beta^{0,q}=0$ for $q<n$. Finally, by \cite[Lemma 4.3]{cdg2}, $\beta^{0,n}=-\beta^{n,0}$, and we have $\beta^{n,0}=\beta$, because $\Gr_{F}^{n}=F^{n}$ and $F^{n}\Gr_{n+r}^{W}H^{n}_{\lim}=0$ if $r<0$. This concludes the proof.

    For the second part of the proposition, we need to show that if $\Xcal_{0}$ has canonical singularities, then $\widetilde{p}_{g}=\beta=0$. We already know that $p_{g}$, and hence $\widetilde{p}_{g}$, vanishes, cf. \textsection\ref{section:genus}. The vanishing of $\beta$ is equivalent to a result of C.-L. Wang \cite[Theorem 2.1 \& Corollary 2.4]{Wang:MRL}, to the effect that the nilpotent operator $N$, associated to the monodromy on $H^{n}_{\lim}$, annihilates $F^{n}H^{n}_{\lim}$. Indeed, if $N$ annihilates $F^{n}H^{n}_{\lim}$, then the image of $F^{n}\Gr^{W}_{n+r}H^{n}_{\lim}$ under $N^{r}$ vanishes for $r\geq 1$, and we know that $N^{r}$ defines an isomorphism $\Gr^{W}_{n+r}H^{n}_{\lim}\to\Gr^{W}_{n-r}H^{n}_{\lim}$. Conversely, if $\beta=0$, then necessarily $F^{n}H^{n}_{\lim}=F^{n}W_{n}H^{n}_{\lim}$. But $N$ sends $F^{n}W_{n}H^{n}_{\lim}$ to $F^{n-1}W_{n-2}H^{n}_{\lim}$ and the latter vanishes, because $F^{n-1}\Gr^{W}_{r}H^{n}_{\lim}=0$ for every $r\leq n-2$, for type reasons.

    We notice that the results in \cite{Wang:MRL} require that $\Xcal_{0}$ be Gorenstein and irreducible. The assumption that $\Xcal_{0}$ is Gorenstein in \emph{op. cit.} is automatic in our case, since $\Xcal$ is smooth. As for the asumption that $\Xcal_{0}$ is irreducible, it is not necessary in our setting. Indeed, since the morphism $\Xcal\to\Delta$ has reduced fibers and $\Xcal$ is smooth, the Stein factorization is of the form $\Xcal\to\Delta^{\prime}\to\Delta$, where $\Delta^{\prime}$ is a disjoint union of discs and $\Delta^{\prime}\to\Delta$ is a trivial covering. Working over the components of $\Delta^{\prime}$ instead of $\Delta$, we reduce to the case that the fibers are connected. Since $\Xcal_{0}$ has canonical singularities by assumption, it is in particular normal, so that connectedness entails irreducibility.  
\end{proof}

\subsection{}\label{subsec:asymptoticholomorphictorsion} Let $\Xcal \to \Delta$ be a degeneration as in \textsection\ref{sec:L2deg}. We conclude by \eqref{sec:Quillendeg} and Proposition \ref{prop:L2deg} the following: 

\begin{corollary}\label{cor:asymptoticholomorphictorsion}
    The analytic torsion of the sheaf of holomorphic functions, endowed with the trivial metric, has the following asymptotic behavior for $t$ close to $0$:
    \begin{displaymath}
        \log \tau(\Xcal_t, \Ocal_{\Xcal_t}) = (-1)^{n}  \left(\frac{\mu}{(n+2)!}- \widetilde{p}_g \right)\log|t|^{2} + (-1)^{n}\beta\log\log|t|^{-1}+O(1),
    \end{displaymath}
where $\beta$ is defined in \eqref{eq:beta}. In particular:
\begin{enumerate}
    \item If the weak form of the conjecture holds, then $\tau(\Xcal_t, \Ocal_{\Xcal_t})^{(-1)^n}$ converges to zero as $t$ goes to 0.  
    \item If the strong form of the conjecture holds, then $\tau(\Xcal_t, \Ocal_{\Xcal_t})^{(-1)^n}=O\left(|t|^{2m/(n+2)!-\varepsilon}\right)$ as $t\to 0$, for every $\varepsilon>0$. Here, $m$ is the number of singular points in $\Xcal_{0}$.
    \item If $\Xcal_{0}$ has canonical singularities, then $\tau(\Xcal_t, \Ocal_{\Xcal_t})^{(-1)^n}=O(|t|^{2\mu/(n+2)!})$  as $t\to 0$.
\end{enumerate}
\end{corollary}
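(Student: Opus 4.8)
The plan is to isolate $\log\tau(\Xcal_t,\Ocal_{\Xcal_t})$ by playing the Quillen metric asymptotics against the $L^2$ metric asymptotics, using the defining relation between the two. Fix a holomorphic trivialization $\sigma$ of $\lambda(\Ocal_{\Xcal})$ as in the preceding subsections. The three ingredients I would assemble are: (a) the Quillen asymptotics \eqref{eq:asymptoticQuillen}, specialized to $E=\Ocal_{\Xcal}$ so that the rank is $e=1$, giving $\log\|\sigma\|_Q^2 = \frac{(-1)^n}{(n+2)!}\mu\log|t|^2 + O(1)$; (b) the $L^2$ asymptotics of Proposition \ref{prop:L2deg}; and (c) the pointwise relation \eqref{eq:Quillenmetric}, which for the squared norm of $\sigma$ reads $\|\sigma\|_Q^2 = \tau(\Xcal_t,\Ocal_{\Xcal_t})\cdot\|\sigma\|_{L^2}^2$, that is $\log\tau = \log\|\sigma\|_Q^2 - \log\|\sigma\|_{L^2}^2$.

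Subtracting (b) from (a) then yields the main display. The $\log|t|^2$ coefficients combine into $(-1)^n\bigl(\frac{\mu}{(n+2)!} - \widetilde{p}_g\bigr)$, while the $\log\log|t|^{-1}$ term of (b), which carries coefficient $(-1)^{n+1}\beta$, flips sign under the subtraction to $(-1)^n\beta$, matching the asserted formula. The trivialization $\sigma$ disappears from the statement because changing it only alters the $O(1)$ term. This step is purely bookkeeping, the one thing to watch being the sign identity $-(-1)^{n+1}=(-1)^n$.

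For the three consequences I would rewrite the formula as $(-1)^n\log\tau = \bigl(\frac{\mu}{(n+2)!}-\widetilde{p}_g\bigr)\log|t|^2 + \beta\log\log|t|^{-1}+O(1)$ and argue by the relative growth of the two divergent terms, using that $\log\log|t|^{-1} = o(\log|t|^{-1})$ as $t\to 0$, so the term linear in $\log|t|$ always dominates. For item (1), the weak form makes the leading coefficient strictly positive, forcing $(-1)^n\log\tau\to-\infty$, hence $\tau^{(-1)^n}\to 0$. For item (2), I would first sum the strong-form inequality $\widetilde{p}_{g,i}\le(\mu_i-1)/(n+2)!$ over the $m$ singular points, using that $\mu$ and $\widetilde{p}_g$ are additive over the $x_i$, to obtain $\frac{\mu}{(n+2)!}-\widetilde{p}_g\ge \frac{m}{(n+2)!}$; since $\log|t|^2<0$ for small $t$ this bounds $(-1)^n\log\tau$ from above by $\frac{m}{(n+2)!}\log|t|^2+\beta\log\log|t|^{-1}+O(1)$, and absorbing the subdominant $\log\log$ term into $\varepsilon\log|t|^{-1}$ yields the claimed $O(|t|^{2m/(n+2)!-\varepsilon})$. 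For item (3), Proposition \ref{prop:L2deg} gives $\widetilde{p}_g=\beta=0$ in the canonical case, so the $\log\log$ term vanishes and the bound $O(|t|^{2\mu/(n+2)!})$ holds with no loss of $\varepsilon$. The only genuinely delicate point in the whole argument is this comparison of $\log\log|t|^{-1}$ against $\log|t|^{-1}$, which is what lets the $\beta$-term be harmlessly absorbed in item (2) and ignored in the limit in item (1); everything else is the arithmetic of combining the two already-established asymptotic expansions.
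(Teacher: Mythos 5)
Your proposal is correct and is exactly the paper's argument: the paper derives the corollary by combining the Quillen-metric asymptotics \eqref{eq:asymptoticQuillen} (with $E=\Ocal_{\Xcal}$, $e=1$) with Proposition \ref{prop:L2deg} via the relation $\log\tau=\log\|\sigma\|_Q^2-\log\|\sigma\|_{L^2}^2$, just as you do. Your handling of the three consequences — additivity of $\mu$ and $\widetilde{p}_g$ over the singular points, absorption of the subdominant $\log\log|t|^{-1}$ term into $\varepsilon\log|t|^{-1}$, and the vanishing $\widetilde{p}_g=\beta=0$ in the canonical case — is the intended bookkeeping that the paper leaves implicit.
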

\qed

Notice that, while the analytic torsion depends on the choice of metrics, the asymptotic behavior is metric independent. Also, for families of curves, the volume of the fibers with respect to a Kähler form on $\Xcal$ is constant, and the $L^2$-norm on holomorphic differentials is independent of the choice of metric. Hence, the asymptotic behavior is then valid for any choice of Kähler metric on the total space $\Xcal$. We remark that, for curves, $\tau(\Xcal_t, \Ocal_{\Xcal_t})^{-1}=\det\Delta^{0,1}_{\overline{\partial}}$ coincides with $\det\Delta^{0,0}_{\ov{\partial}}$, which in turn coincides with the determinant of the Laplace--Beltrami operator up to a topological constant.

\subsection{}\label{subsec:globalisation}
 A situation where the corollary applies is that of a degeneration of projective varieties endowed with an embedding $\Xcal \subseteq \PBbb^M \times \Delta$ over $\Delta$, and such that $\Xcal$ is equipped with a smooth Kähler metric $\omega$ on $\Xcal$ whose cohomology class $[\omega_t]$ for $t \in \Delta \setminus \{0 \} $ is given by the canonical polarization coming from $\PBbb^M$. 

A variant of the above arises from the tautological family $\Hcal$ of hypersurfaces of degree $d$ in $\PBbb^{n+1}$ over the space of parameters $\PBbb^N$, where $N = \binom{n+1}{d}-1$. Then $\Hcal$ is a smooth space and inherits a Kähler metric from $\PBbb^{n+1}$ and $\PBbb^N$. Hence, the restriction along any curve $\Delta \to \PBbb^{N}$ in which the total space is still smooth is an example of such a degeneration. 

\subsection{}\label{subsec:reformulation} Since any germ of isolated singularity admits a good compactification as in \textsection\ref{subsec:degeneration}, we readily deduce, from Corollary \ref{cor:asymptoticholomorphictorsion}, a reformulation of our conjecture in terms of analytic torsion.
\begin{proposition}\label{prop:reformulation}
For an isolated singularity $f=0$ of dimension $n\geq 1$, the following are equivalent:
\begin{enumerate}
    \item\label{item:reformulation-1} The weak form, respectively strong form, of the conjecture holds for $f$.
    \item\label{item:reformulation-2} For any good compactification $\Xcal\to\Delta$ of $f$, and any K\"ahler form on $\Xcal$ which is rational on smooth fibers, we have
    \begin{displaymath}
        \begin{split}
            &\tau(\Xcal_t, \Ocal_{\Xcal_t})^{(-1)^{n}}\to 0,\quad\text{as}\quad t\to 0,\\
            &\text{respectively}\quad \tau(\Xcal_t, \Ocal_{\Xcal_t})^{(-1)^{n}}=O(|t|^{2/(n+2)!-\varepsilon}),\quad\text{for every }\quad\varepsilon>0, \quad\text{as}\quad t\to 0.   
        \end{split}
    \end{displaymath}
\end{enumerate}
\end{proposition}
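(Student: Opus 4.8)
The plan is to read everything off the single asymptotic expansion furnished by Corollary \ref{cor:asymptoticholomorphictorsion}, after observing that the two invariants in its leading coefficient are intrinsic to the germ. First I would recall from \textsection\ref{subsec:degeneration} that a good compactification $g\colon\Xcal\to\Delta$ of $f$ exists, that its central fiber $\Xcal_0$ carries a single singular point isomorphic to $f$, and hence that the quantities $\mu$ and $\widetilde p_g$ attached to the family in Corollary \ref{cor:asymptoticholomorphictorsion} agree with the Milnor number and spectral genus of the germ itself. Consequently the leading coefficient
\begin{displaymath}
    c:=\frac{\mu}{(n+2)!}-\widetilde p_g
\end{displaymath}
is the same for every good compactification and every admissible Kähler form. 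This is exactly what neutralizes the ``for any'' quantifier in (\ref{item:reformulation-2}): the implication (\ref{item:reformulation-1})$\Rightarrow$(\ref{item:reformulation-2}) needs the torsion statement for all choices, while (\ref{item:reformulation-2})$\Rightarrow$(\ref{item:reformulation-1}) needs only the existence of one good compactification, and since $c$ does not vary, both reduce to one inequality on $c$.

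With this in hand, the corollary reads
\begin{displaymath}
    \log\tau(\Xcal_t,\Ocal_{\Xcal_t})^{(-1)^n}=c\,\log|t|^2+\beta\,\log\log|t|^{-1}+O(1),\qquad \beta\geq 0,
\end{displaymath}
and the entire argument becomes a comparison of growth rates as $t\to 0$, the crucial input being that $\log\log|t|^{-1}=o(\log|t|^{-1})$. For the weak form I would note that $\tau^{(-1)^n}\to 0$ is equivalent to $\log\tau^{(-1)^n}\to-\infty$; since the $\log|t|^2$ term dominates the $\log\log$ correction, this occurs precisely when $c>0$, which is the weak form. The remaining cases must be ruled out: if $c<0$ the leading term forces $\log\tau^{(-1)^n}\to+\infty$, and if $c=0$ the expansion collapses to $\beta\log\log|t|^{-1}+O(1)$, which tends to $+\infty$ when $\beta>0$ and stays bounded when $\beta=0$, so in neither subcase does $\tau^{(-1)^n}$ converge to $0$. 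This boundary case $c=0$ is the one place where the positivity $\beta\geq 0$ supplied by Proposition \ref{prop:L2deg} is genuinely needed.

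For the strong form I would compare $c$ against the threshold $1/(n+2)!$, keeping track of the identity $\log|t|^2=2\log|t|$ so that the target exponent $2/(n+2)!$ matches $2c$. Writing the bound $\tau^{(-1)^n}=O(|t|^{2/(n+2)!-\varepsilon})$ in logarithmic form and dividing by $\log|t|<0$, the secondary term contributes $\beta\,\log\log|t|^{-1}/\log|t|\to 0$; letting first $t\to 0$ and then $\varepsilon\to 0$ shows that validity for every $\varepsilon>0$ is equivalent to $c\geq 1/(n+2)!$, which is literally the strong form $\widetilde p_g\leq(\mu-1)/(n+2)!$. Conversely, granting $c\geq 1/(n+2)!$, the elementary estimate $\beta\log\log|t|^{-1}\leq\varepsilon\log|t|^{-1}$, valid for $t$ small enough, absorbs the secondary term into the $|t|^{-\varepsilon}$ slack and yields the desired $O$-bound.

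The only genuine subtlety, and the step I would handle most carefully, is the role of the $\log\log|t|^{-1}$ correction. It is harmless for both the convergence statement and the order-of-vanishing statement precisely because it is $o(\log|t|^{-1})$, yet it cannot be discarded in the boundary case $c=0$ of the weak form, where it becomes the deciding term. Everything else is bookkeeping with the sign of $\log|t|$ together with the observation that $\mu$ and $\widetilde p_g$ depend only on $f$.
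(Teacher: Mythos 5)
Your proof is correct and follows exactly the route the paper intends: the paper leaves Proposition \ref{prop:reformulation} as a direct consequence of Corollary \ref{cor:asymptoticholomorphictorsion} together with the existence of good compactifications, and your write-up simply supplies the growth-rate bookkeeping. You also correctly isolate the one genuinely delicate point, namely that the positivity $\beta\geq 0$ from Proposition \ref{prop:L2deg} is what rules out the boundary case $c=0$ in the implication \eqref{item:reformulation-2}$\Rightarrow$\eqref{item:reformulation-1}, which is precisely the remark the paper makes after the proposition.
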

\qed

In the proposition above, one can replace \emph{any good compactification} (resp. \emph{any K\"ahler form}) by \emph{some good compactification} (resp. \emph{some K\"ahler form}). We also bring the reader's attention to the fact that the positivity of $\beta$, established in Proposition \ref{prop:L2deg}, is fundamental to prove that \eqref{item:reformulation-2} implies \eqref{item:reformulation-1}.


\section{Quasi-homogeneous singularities}
In this section, we address several cases of the strong form of the conjecture in the setting of quasi-homogeneous singularities. We also recall some useful notions about Newton polyhedra also utilized in later sections.

\subsection{Newton polyhedra}  Let $f(x_0, \ldots, x_n) = \sum_{k\in \NBbb^{n+1}} a_k x^k$ be a power series with complex coefficients, with $a_{0}=0$ and where we define $x^{k}=x_{0}^{k_{0}}\cdots x_{n}^{k_{n}}$. The Newton diagram of the singularity is constituted of the following polyhedra. The upper Newton polyhedron associated to $f$, denoted by $\Gamma_{+}$, is the convex hull of the set $\bigcup_{a_{k}\neq 0}(k+\RBbb^{n+1}_{+})$. The associated Newton boundary, denoted by $\Gamma$, is the boundary of $\Gamma_{+}$. We denote by $\Gamma_{c}$ the compact Newton boundary, meaning the union of the compact faces of $\Gamma$. The lower Newton polyhedron, denoted by $\Gamma_{-}$, is the union of the lines joining the origin with the points on $\Gamma_{c}$. Since we only deal with lower Newton polyhedra, we will usually refer to these as simply Newton polyhedra. 

Below we display a Newton diagram. In the picture, the lower Newton polyhedron is determined by the vertices $(0,0)$, $A$, $B$, $C$, $D$, and it is delimited by the segments painted in red. The compact Newton boundary has three faces, namely $AB$, $BC$, $CD$. The whole Newton boundary has two more unbounded faces, painted in blue. The upper Newton polyhedron is the region above the Newton boundary. \bigskip

\begin{center}
\tikzset{every picture/.style={line width=0.75pt}} 

\begin{tikzpicture}[x=0.75pt,y=0.75pt,yscale=-0.7,xscale=0.7]

\draw    (48,347.5) -- (596,346.5) ;
\draw [color={rgb, 255:red, 208; green, 2; blue, 27 }  ,draw opacity=1 ][line width=1.5]    (119,70) -- (155.5,209) ;
\draw [color={rgb, 255:red, 208; green, 2; blue, 27 }  ,draw opacity=1 ][line width=1.5]    (155.5,209) -- (224.5,267) ;
\draw [color={rgb, 255:red, 208; green, 2; blue, 27 }  ,draw opacity=1 ][line width=1.5]    (224.5,267) -- (366.5,303) ;
\draw [color={rgb, 255:red, 74; green, 144; blue, 226 }  ,draw opacity=1 ][line width=1.5]    (366.5,303) -- (595.5,304) ;
\draw [color={rgb, 255:red, 208; green, 2; blue, 27 }  ,draw opacity=1 ][line width=1.5]    (121.5,348) -- (366.5,303) ;
\draw [color={rgb, 255:red, 208; green, 2; blue, 27 }  ,draw opacity=1 ][line width=1.5]    (119,70) -- (119,348) ;
\draw [color={rgb, 255:red, 74; green, 144; blue, 226 }  ,draw opacity=1 ][line width=1.5]    (119,2) -- (119,70) ;

\draw (112,342) node [anchor=north west][inner sep=0.75pt]    {$\bullet  $};
\draw (64,355.4) node [anchor=north west][inner sep=0.75pt]    {$(0,0)$};
\draw (112,68) node [anchor=north west][inner sep=0.75pt]    {$\bullet $};
\draw (147.5,201.5) node [anchor=north west][inner sep=0.75pt]    {$\bullet $};
\draw (217,261) node [anchor=north west][inner sep=0.75pt]    {$\bullet $};
\draw (357,297) node [anchor=north west][inner sep=0.75pt]    {$\bullet $};
\draw (126,60) node [anchor=north west][inner sep=0.75pt]    {$A$};
\draw (164,191.4) node [anchor=north west][inner sep=0.75pt]    {$B$};
\draw (224,244) node [anchor=north west][inner sep=0.75pt]    {$C$};
\draw (361,278.4) node [anchor=north west][inner sep=0.75pt]    {$D$};
\draw (156,259.4) node [anchor=north west][inner sep=0.75pt]  [font=\LARGE,color={rgb, 255:red, 0; green, 0; blue, 0 }  ,opacity=1 ]  {$\Gamma _{-}$};
\draw (317,61.4) node [anchor=north west][inner sep=0.75pt]  [font=\LARGE]  {$\Gamma _{+}$};

\end{tikzpicture}
\end{center}
\bigskip

\subsection{}

Let $n\geq 1$ be an integer, and consider a quasi-homogeneous polynomial $f(x_0, \ldots, x_n)$, with rational weights $w_0, \ldots, w_n>0$. This means that for any $\lambda \in \CBbb$, we have $f(\lambda^{w_{0}} x_0, \ldots, \lambda^{w_{n}} x_n) = \lambda \cdot f(x_0, \ldots, x_n)$. We suppose  
\begin{displaymath}
    f(x_0, \ldots, x_n) = 0
\end{displaymath}
has an isolated singularity at the origin. Then, the weights satisfy $w_{i}<1$. 

For quasi-homogeneous singularities, the Milnor number and the spectral genus depend only on the weights, and are given by the following formulas:
\begin{equation}\label{eq:qhommilnor}
    \mu = \prod_{i}\left(\frac{1}{w_{i}}-1\right)
\end{equation}
and
\begin{equation}\label{eq:qhomspectralgenus}
    \widetilde{p}_{g}=\sum \left( 1-k_{0}w_{0}-\cdots-k_{n}w_{n}\right),
\end{equation}
where the sum runs over integers $k_{i}>0$ such that $\sum k_{i}w_{i}<1$. These are the interior lattice points of the Newton polyhedron of the polynomial $f(x_0, \ldots, x_n)$. Eventually, for convenience, we may include the lattice points with $\sum k_{i}w_{i}=1$, since these contribute zero to the sum. For the Milnor number, the formula \eqref{eq:qhommilnor} is given in \cite[Theorem 1]{Milnor-Orlik}. The expression \eqref{eq:qhomspectralgenus} for $\widetilde{p}_{g}$ can be derived from \eqref{eq:spgenusMSaito} and the computation of the spectral polynomial of $f$ (cf. \eqref{eq:spectral-polynomial} for the definition), due to Steenbrink \cite[Example 5.11]{Steenbrink-mixedonvanishing}:
\begin{displaymath}
    \mathrm{Sp}_{f}(T)=\prod_{j}\frac{T^{w_{j}}-T}{1-T^{w_{j}}}.
\end{displaymath}
To this end, it is enough to expand the polynomial into a power series of $T$ with rational exponents, and collect the terms whose exponents are strictly smaller than one. For this purpose, one can ignore the negative $T$ in the numerator, and expand $T^{w_j}/(1-T^{w_j}) = T^{w_j} + T^{2w_j} + \ldots $. One finds that the exponents $\alpha^{\prime}$ strictly less than one are of the form  $\alpha^{\prime}=\sum k_{i}w_{i}$, where the $k_{i}$ run over all the possible integers $k_{i}>0$ such that $\alpha^{\prime}<1$.

\subsection{Homogeneous polynomials} It would be interesting to establish the strong form of the conjecture for quasi-homogeneous singularities, such as the Brieskorn--Pham singularities of the form
\begin{displaymath}
    f(x) = x_0^{a_0} + x_1^{a_1} + \cdots + x_n^{a_n} = 0,
\end{displaymath}
corresponding to the choices of weights $w_i = 1/a_i$. Here we treat the case when $f$ be a homogenenous polynomial in arbitrary dimension, so that all $w_i = \frac{1}{d}$. In this case \eqref{eq:qhommilnor} gives
\begin{equation}\label{eq:hommilnor}
    \mu = (d-1)^{n+1}.
\end{equation}
The spectral genus can be computed explicitly, based on the following elementary identity: 

\begin{lemma}\label{lemma:homogenousspectral}
Let $d > 1$ be an integer, and $n \geq 1$. Then  
    \begin{equation}
    \sum \left(d-k_0-\cdots - k_n\right) = \frac{d(d-1)\cdots (d-(n+1))}{(n+2)!},
\end{equation}
where the sum runs over integers $k_i > 0$, such that $\sum k_i < d$. Consequently, 
\begin{displaymath}
    \widetilde{p}_{g}=\frac{(d-1)\cdots (d-(n+1))}{(n+2)!}.
\end{displaymath}
\end{lemma}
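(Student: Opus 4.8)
The plan is to recognize the left-hand side as the sum of a single distinguished part over all compositions of $d$, and then to exploit the symmetry among those parts. Concretely, I would introduce the auxiliary quantity $S = d - k_0 - \cdots - k_n$. Since the summation constraint is $k_i > 0$ together with $\sum_i k_i < d$, the value $S$ is a positive integer, and the assignment $(k_0, \ldots, k_n) \mapsto (k_0, \ldots, k_n, S)$ is a bijection between the index set of the sum and the set of compositions of $d$ into $n+2$ positive integer parts. Under this bijection the summand $d - k_0 - \cdots - k_n$ is exactly the last part $S$, so the left-hand side equals $\sum_{\mathcal{C}} S$, where $\mathcal{C}$ ranges over the compositions $k_0 + \cdots + k_n + S = d$ with all parts positive.

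The key step is then a symmetrization: the $n+2$ parts play interchangeable roles, so the sum of any one fixed part over all of $\mathcal{C}$ is independent of which part is chosen. Hence $\sum_{\mathcal{C}} S$ equals $\tfrac{1}{n+2}$ times the sum over $\mathcal{C}$ of the total $k_0 + \cdots + k_n + S = d$. As this total is the constant $d$, I obtain $\sum_{\mathcal{C}} S = \tfrac{d}{n+2}\, N$, where $N$ is the number of compositions of $d$ into $n+2$ positive parts. By a standard stars-and-bars count, $N = \binom{d-1}{n+1}$ (and both sides of the claimed identity vanish in the degenerate range $d < n+2$, where there are no such compositions, so there is nothing to check). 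Expanding $\binom{d-1}{n+1} = \tfrac{(d-1)(d-2)\cdots(d-n-1)}{(n+1)!}$ and multiplying by $d/(n+2)$ yields $\tfrac{d(d-1)\cdots(d-(n+1))}{(n+2)!}$, which is the asserted closed form, the numerator being a product of $n+2$ consecutive integers.

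For the statement about $\widetilde{p}_g$, I would substitute directly into \eqref{eq:qhomspectralgenus}. With all weights equal to $w_i = 1/d$, the constraint $\sum_i k_i w_i < 1$ becomes $\sum_i k_i < d$, and each summand $1 - k_0 w_0 - \cdots - k_n w_n$ equals $\tfrac{1}{d}(d - k_0 - \cdots - k_n)$. Thus $\widetilde{p}_g$ is $1/d$ times the left-hand side of the identity, giving $\widetilde{p}_g = \tfrac{(d-1)\cdots(d-(n+1))}{(n+2)!}$.

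I do not anticipate a genuine obstacle: the entire content is the reinterpretation of the first paragraph and the symmetry argument of the second. The only points requiring a little care are the bookkeeping for small $d$, where the index set is empty, and verifying that the $n+2$ consecutive factors $d, d-1, \ldots, d-(n+1)$ match the product left after simplifying $\tfrac{d}{n+2}\binom{d-1}{n+1}$.
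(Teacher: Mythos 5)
Your proof is correct, and it takes a genuinely different route from the paper's. The paper argues by polynomial interpolation: an inductive argument (whose details are left to the reader) shows the sum is a polynomial of degree $n+2$ in $d$; since the index set is empty for $d = 0, \ldots, n+1$, the polynomial must be $C \cdot d(d-1)\cdots(d-(n+1))$, and evaluating at $d = n+2$, where the sum has the single term $1$, gives $C = \frac{1}{(n+2)!}$. You instead produce the closed form directly: the bijection $(k_0,\ldots,k_n) \mapsto (k_0,\ldots,k_n, S)$ with $S = d - \sum k_i$ identifies the index set with compositions of $d$ into $n+2$ positive parts, the symmetry of that set under permuting parts gives $\sum_{\mathcal{C}} S = \frac{d}{n+2} N$, and stars-and-bars gives $N = \binom{d-1}{n+1}$, whence the product formula. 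Your argument is fully self-contained and elementary --- it avoids the paper's delegated polynomiality claim entirely and explains combinatorially \emph{why} the answer is a product of $n+2$ consecutive integers divided by $(n+2)!$ (it is $\frac{d}{n+2}\binom{d-1}{n+1}$). The paper's approach is shorter once polynomiality is granted, and its interpolation technique generalizes to situations where no clean bijective or symmetric structure is available; but for this particular identity your derivation is arguably the more illuminating one. Your handling of the degenerate range $d < n+2$ (both sides vanish) and the deduction of the $\widetilde{p}_g$ formula by substituting $w_i = 1/d$ into \eqref{eq:qhomspectralgenus} and factoring out $1/d$ are both correct and match the paper's intent.
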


\begin{proof}
    An inductive argument shows that the sum is a polynomial of degree $n+2$ in $d$. Moreover, the sum is empty for $d = 0, \ldots, n+1,$ so  the polynomial is of the form $C \cdot d(d-1)\cdots (d-(n+1))$ for some constant $C$. Since the sum is equal to $1$ for $d=n+2$ one sees that $C = \frac{1}{(n+2)!}$.
\end{proof}

The following proposition follows immediately from \eqref{eq:hommilnor} and Lemma \ref{lemma:homogenousspectral}:
\begin{proposition}\label{prop:conjecture-homogenenous}
Let $f=0$ define an isolated degree $d$ homogeneous singularity at the origin. Then the strong form of the conjecture is true. Moreover, for a fixed $n$, 
    \begin{displaymath}
        \frac{\widetilde{p}_g(d)}{\mu(d)} \nearrow \frac{1}{(n+2)!},\quad  \hbox{ as }\quad d \to +\infty,
    \end{displaymath}
where $\mu(d)$ and $\widetilde{p}_{g}(d)$ denote the corresponding Milnor number and spectral genus.
\end{proposition}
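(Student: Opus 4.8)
The plan is to recognize the sum as a symmetric average over compositions of $d$, rather than pursuing the polynomial-interpolation route. First I would introduce the auxiliary variable $k_{n+1} = d - k_0 - \cdots - k_n$. For a tuple $(k_0, \ldots, k_n)$ of positive integers, the constraint $\sum_{i=0}^n k_i < d$ is equivalent to $k_{n+1} \geq 1$, and in that case $(k_0, \ldots, k_{n+1})$ is a tuple of $n+2$ strictly positive integers with $\sum_{i=0}^{n+1} k_i = d$. Under this bijection, the summand $d - k_0 - \cdots - k_n$ is exactly $k_{n+1}$. Hence the left-hand side of the claimed identity equals $\sum k_{n+1}$, the sum being taken over all compositions of $d$ into $n+2$ positive parts.

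Next I would exploit the symmetry of this index set under permutation of the coordinates. Since permuting the parts of a composition preserves the set of compositions, the total of any single coordinate over all compositions equals that of any other; averaging over the $n+2$ coordinates gives $\sum k_{n+1} = \frac{1}{n+2} \sum \sum_{i=0}^{n+1} k_i = \frac{d}{n+2}\, N$, where $N$ is the number of such compositions. By a standard stars-and-bars count, $N = \binom{d-1}{n+1}$. Combining these yields $\frac{d}{n+2}\binom{d-1}{n+1}$, which simplifies directly to $\frac{d(d-1)\cdots(d-(n+1))}{(n+2)!}$, proving the identity.

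The consequence for $\widetilde{p}_g$ then follows by substituting all weights $w_i = 1/d$ into \eqref{eq:qhomspectralgenus}: the spectral genus becomes $\frac{1}{d} \sum (d - k_0 - \cdots - k_n)$, so dividing the identity by $d$ gives the stated value $\frac{(d-1)\cdots(d-(n+1))}{(n+2)!}$.

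I do not expect a serious obstacle here; the argument is entirely elementary once the reformulation is in place. The only points requiring care are verifying that the change of variables genuinely matches the index set of the sum, including the boundary case $\sum k_i = d$, which corresponds to $k_{n+1} = 0$ and contributes a zero summand, so that it may be included or excluded freely, as already noted after \eqref{eq:qhomspectralgenus}, and confirming the elementary count $N = \binom{d-1}{n+1}$.
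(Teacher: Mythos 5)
Your combinatorial proof of the key identity is correct, and it takes a genuinely different route from the paper's. The paper proves Lemma \ref{lemma:homogenousspectral} by interpolation: it asserts (by an inductive argument) that the sum is a polynomial of degree $n+2$ in $d$, observes that it vanishes at $d=0,1,\ldots,n+1$, and pins down the constant by evaluating at $d=n+2$. You instead complete each tuple to a composition $(k_0,\ldots,k_{n+1})$ of $d$ into $n+2$ positive parts, use the permutation symmetry of the set of compositions to identify the sum with $\frac{d}{n+2}\binom{d-1}{n+1}$, and simplify. Both arguments are elementary, but yours produces the closed form directly and avoids the unproved (though routine) polynomiality claim; the boundary case $\sum k_i = d$ is also handled cleanly, exactly as the paper's convention after \eqref{eq:qhomspectralgenus} permits. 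The passage from the identity to $\widetilde{p}_g = (d-1)\cdots(d-(n+1))/(n+2)!$ is likewise correct.

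However, as a proof of Proposition \ref{prop:conjecture-homogenenous} your proposal is incomplete: it ends with the formula for $\widetilde{p}_g$ and never addresses the two statements actually being claimed, namely the strong form of the conjecture and the monotone convergence of $\widetilde{p}_g(d)/\mu(d)$. Both require combining your formula with the Milnor number $\mu=(d-1)^{n+1}$ from \eqref{eq:hommilnor}, which your write-up never invokes. Concretely, the ratio is
\begin{displaymath}
\frac{\widetilde{p}_g(d)}{\mu(d)} \;=\; \frac{1}{(n+2)!}\prod_{j=2}^{n+1}\frac{d-j}{d-1} \;=\; \frac{1}{(n+2)!}\prod_{j=2}^{n+1}\left(1-\frac{j-1}{d-1}\right),
\end{displaymath}
which vanishes for $2\le d\le n+1$ and, for $d\ge n+2$, is a product of positive factors each non-decreasing in $d$ with limit $1$; this gives the claimed $\nearrow 1/(n+2)!$. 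The strong form amounts to $(d-1)(d-2)\cdots(d-(n+1))\le (d-1)^{n+1}-1$: for $2\le d\le n+1$ the left-hand side vanishes (with equality precisely at $d=2$, the ordinary double point), and for $d\ge n+2$ one has $(d-1)(d-2)\cdots(d-(n+1))\le (d-2)(d-1)^{n}=(d-1)^{n+1}-(d-1)^{n}\le (d-1)^{n+1}-1$. The paper itself treats this final step as immediate, so the omission is mild, but a complete proof of the proposition should include these lines: the inequality and the monotone limit, not the lattice-point identity, are the content of the statement.
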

\qed

\subsection{Quasi-homogeneous singularities in dimension one}\label{sec:listofsingularitiesdim1} The purpose of the rest of this section is to prove the following theorem: 
\begin{theorem}\label{thm:quasi-homogeneous}
    When $n =1$, the strong form of the conjecture is satisfied in the case of quasi-homogeneous singularities.
\end{theorem}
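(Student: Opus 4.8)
The plan is to combine the explicit formulas \eqref{eq:qhommilnor} and \eqref{eq:qhomspectralgenus} with the classification of isolated quasi-homogeneous plane curve singularities. By Saito's criterion for isolatedness, every such germ is, after a weighted-homogeneous change of coordinates, one of three normal forms: $f=x_0^p+x_1^q$, with weights $w_0=1/p$, $w_1=1/q$; $f=x_0^p+x_0x_1^q$, with weights $w_0=1/p$, $w_1=(p-1)/(pq)$; or $f=x_0^px_1+x_0x_1^q$, with weights $w_0=(q-1)/(pq-1)$, $w_1=(p-1)/(pq-1)$. Substituting into \eqref{eq:qhommilnor} one gets, respectively, $\mu=(p-1)(q-1)$, $\mu=pq-p+1$, and $\mu=pq$; in each family $\mu$ is a clean polynomial in $(p,q)$, and the strong form for $n=1$ is exactly the assertion $6\widetilde{p}_g\le \mu-1$. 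I would then treat the three families in turn.

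By \eqref{eq:qhomspectralgenus}, in all three cases $\widetilde{p}_g$ is the sum of $1-k_0w_0-k_1w_1$ over the interior lattice points $(k_0,k_1)$, with $k_i\ge 1$, of the Newton triangle $\{k_0w_0+k_1w_1<1\}$. I would evaluate this weighted count by slicing: fix $k_0$, sum the inner arithmetic progression over $k_1$ up to $\lfloor (1-k_0w_0)/w_1\rfloor$ exactly, and then sum over $k_0$. The leading (integral) term of this count is of size $(1-w_0-w_1)^3/(6w_0w_1)$, which is already comfortably below $\mu/6=(1-w_0)(1-w_1)/(6w_0w_1)$; the subtle point is that the lattice boundary corrections push $\widetilde{p}_g$ back up, and it is exactly the innermost slices $k_0=1$ and $k_1=1$, where the summands are largest, that must be controlled to produce the extra $-1$. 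Concretely I would fix $p\le q$ (using the $p\leftrightarrow q$ symmetry of types I and III and handling II directly), take as base case the diagonal $q=p$ — which for type I is homogeneous of degree $p$ and for type III homogeneous of degree $p+1$, so that Lemma \ref{lemma:homogenousspectral} applies and gives slack $(p-2)/6$, while for type II the base is the node $q=1$ with equality — and then bound the increment $\widetilde{p}_g(p,q+1)-\widetilde{p}_g(p,q)$ by the corresponding increment of $\mu/6$. Finally I would verify by hand the finitely many extremal germs $x_0^2+x_1^2$ and $x_0^2+x_1^3$, i.e. $A_1$ and $A_2$, where the inequality is an equality and leaves no room.

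The hard part is that, unlike $\mu$, the spectral genus $\widetilde{p}_g$ is \emph{not} a polynomial in $(p,q)$: the truncations $\lfloor\cdot\rfloor$ in the inner sums contribute Dedekind-type sums, so one cannot simply compare two polynomials, and the naive continuous (Euler--Maclaurin) bounds are far too lossy in the small cases. The real obstacle is thus to bound the boundary correction sharply enough to survive precisely the cases $A_1$ and $A_2$, where there is zero slack; I expect the cleanest rigorous route to be the slicing estimate above combined with monotonicity of the increments in $q$, reducing the problem to an elementary but careful one-variable estimate. As a useful simplification, for type I with $\gcd(p,q)=1$ the germ is irreducible and the statement is already covered by the irreducible-curve case of Theorem~A, which disposes of the most resistant Dedekind cases and leaves only the non-coprime type I and the mixed families II and III to handle directly.
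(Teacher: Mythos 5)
Your setup coincides with the paper's: the same three normal forms (your types I--III are exactly $x^a+y^b$, $x(x^a+y^b)$, $xy(x^a+y^b)$ up to reindexing), the same formulas \eqref{eq:qhommilnor} and \eqref{eq:qhomspectralgenus}, and the same reduction of the strong form to a weighted count of interior lattice points of a Newton triangle. The observation that type I with $\gcd(p,q)=1$ is irreducible and hence covered by the irreducible-curve case is correct and non-circular (that case is proved independently in the paper), and it is a nice shortcut the paper does not use.

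However, there is a genuine gap: the step that actually proves the inequality is never carried out. Your plan hinges on bounding the increment $\widetilde{p}_g(p,q+1)-\widetilde{p}_g(p,q)$ by the increment of $\mu/6$, i.e.\ on the monotonicity in $q$ of the deficit $E(p,q)=\mu/6-\widetilde{p}_g$; you state this as an expectation ("I expect the cleanest rigorous route\dots"), not as a proved lemma. This is precisely the hard point, and it is not a routine verification: $E(p,q)$ depends on $\gcd(p,q)$ through Dedekind-type corrections, so it jumps irregularly as $q$ increases (e.g.\ for $a=6$ the deficit passes through $\tfrac{2}{3},\tfrac{5}{6},\tfrac{5}{6},\tfrac{11}{12},1,\dots$ as $b=5,6,7,8,9$), and any proof of the increment bound must control exactly those corrections — which is the very difficulty you flag as "the real obstacle." The paper closes this gap differently: instead of an induction, it proves a closed-form evaluation of the lattice sum (Proposition \ref{prop:Mordell}, Mordell's formula extended to arbitrary $k=\gcd(a,b)$), giving
\begin{displaymath}
    \frac{\mu}{6}-\widetilde{p}_{g}=\frac{a^{\prime}+b^{\prime}}{12}(k-1)+\frac{(a^{\prime}-1)(b^{\prime}-1)(a^{\prime}+b^{\prime}+1)}{12a^{\prime}b^{\prime}}
\end{displaymath}
for type I, from which $\mu/6-\widetilde{p}_g\geq 1/6$ is read off directly, and then reduces types II and III to the same formula by translating the relevant triangles and adding elementary edge contributions. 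In effect, your proposed route, if pushed through rigorously, would force you to compute the Dedekind-type terms exactly anyway — at which point the induction is superfluous. As it stands, the proposal is a correct framing plus an unproven key estimate, so it does not constitute a proof.
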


In preparation for the proof, we first notice that the invariants $\mu$ and $\widetilde{p}_g$ only depend on the analytical type of the germ of the singularity. By \cite[Satz 1.3]{K-Saito-Inventiones} we can assume that $w_i \leq \frac{1}{2}$. Then, by \cite{Yoshinaga-Suzuki}, the weights depend only on the topological type of the singularity, which is one of the following: 
\begin{equation}\label{eq:qhomdim1:1}
    x^a + y^b = 0, 
\end{equation}
\begin{equation}\label{eq:qhomdim1:2}
    x(x^a + y^b) = 0,
\end{equation}
\begin{equation}\label{eq:qhomdim1:3}
    xy(x^a + y^b) = 0.
\end{equation}
In the following sections, we will prove the theorem by analyzing these cases. 
\subsection{}
We begin our treatment of the singularities in \textsection\ref{sec:listofsingularitiesdim1} by the following computation, which is due to Mordell \cite{Mordell} in the case when $a$ and $b$ are relatively prime:

\begin{proposition}\label{prop:Mordell}
 Let $a, b \geq 2$ be two integers. Define $k 
 = \gcd(a,b)$ and write $a = k \cdot a', b = k \cdot b'$. Then we have 
\begin{equation}\label{eq:mordell-type-sum}
     \sum\left(1-\frac{x}{a}-\frac{y}{b}\right) = \frac{(a-1)(b-1)}{6}- \frac{a^{\prime}+b^{\prime}}{12}(k-1)-\frac{(a^{\prime}-1)(b^{\prime}-1)(a^{\prime}+b^{\prime}+1)}{12a^{\prime}b^{\prime}},
\end{equation}
where the sum runs over the interior lattice points of the triangle with vertices $(0,0)$, $(a,0)$, $(0,b)$.
\end{proposition}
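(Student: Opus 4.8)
The plan is to evaluate the weighted sum by splitting off its three natural pieces and reducing each to a classical lattice-point count and a Dedekind sum. Writing $S=\sum\left(1-\tfrac{x}{a}-\tfrac{y}{b}\right)$ over the interior lattice points, I would first introduce $N$ for the number of such points and $\Sigma_x,\Sigma_y$ for the sums of their $x$- and $y$-coordinates, so that $S=N-\tfrac{1}{a}\Sigma_x-\tfrac{1}{b}\Sigma_y$. The count $N$ is immediate from Pick's theorem: the triangle has area $ab/2$ and exactly $a+b+k$ lattice points on its boundary (the two legs contribute $a-1$ and $b-1$ interior edge-points, the hypotenuse $\gcd(a,b)-1=k-1$, plus the three vertices), whence $N=\tfrac12\left((a-1)(b-1)-(k-1)\right)$.

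For $\Sigma_x$ I would slice by vertical lines. Writing $a=ka'$, $b=kb'$ with $\gcd(a',b')=1$ and observing $bx/a=b'x/a'$, one checks that for each $x\in\{1,\dots,a-1\}$ the number of admissible $y$ is uniformly $n_x=b-1-\lfloor b'x/a'\rfloor$ (the case $a'\mid x$, where $b'x/a'\in\ZBbb$, yields the same expression after accounting for the strict inequality), so that $\Sigma_x=(b-1)\tfrac{a(a-1)}{2}-\sum_{x=1}^{a-1}x\lfloor b'x/a'\rfloor$. The remaining sum I would treat by Euclidean division $x=a'q+r$ with $0\le q\le k-1$, $0\le r\le a'-1$, using $\lfloor b'x/a'\rfloor=b'q+\lfloor b'r/a'\rfloor$. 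This expands into four pieces governed by $\sum_q q^2$, $\sum_q q$, the elementary identity $\sum_{r=1}^{a'-1}\lfloor b'r/a'\rfloor=\tfrac12(a'-1)(b'-1)$, and the weighted sum $\sum_{r=1}^{a'-1}r\lfloor b'r/a'\rfloor$. The latter is the only genuinely arithmetic input: writing $\lfloor t\rfloor=t-\left(\left(t\right)\right)-\tfrac12$ for the sawtooth $\left(\left(t\right)\right)$, together with $r=a'\left(\left(r/a'\right)\right)+a'/2$ for $1\le r\le a'-1$, and using $\sum_{r}\left(\left(r/a'\right)\right)\left(\left(b'r/a'\right)\right)=s(b',a')$ as well as $\sum_r\left(\left(b'r/a'\right)\right)=0$, it evaluates to $\tfrac{b'(a'-1)(2a'-1)}{6}-a'\,s(b',a')-\tfrac{a'(a'-1)}{4}$, where $s(b',a')$ is the Dedekind sum.

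An entirely symmetric computation produces $\Sigma_y$, with $s(b',a')$ replaced by $s(a',b')$. The decisive structural point is that upon forming $S=N-\Sigma_x/a-\Sigma_y/b$ the Dedekind contributions enter precisely as $-\left(s(a',b')+s(b',a')\right)$: the factor $a'$ inside the weighted sum cancels the $a=ka'$ in the denominator (and the $k$ coming from the $q$-summation), leaving exactly the symmetric combination. I would then invoke classical Dedekind reciprocity $s(a',b')+s(b',a')=-\tfrac14+\tfrac{1}{12}\left(\tfrac{a'}{b'}+\tfrac{b'}{a'}+\tfrac{1}{a'b'}\right)$; the summand $\tfrac{1}{a'b'}$ is the source of the denominator $12a'b'$ in the stated formula, while the $q$-sums produce the linear-in-$(k-1)$ correction $-\tfrac{a'+b'}{12}(k-1)$. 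Note that only the coprime reciprocity is needed, the non-coprimality being absorbed entirely into the Euclidean-division bookkeeping.

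The main obstacle is not conceptual but the final collation: one must gather the several quadratic-in-$k$ contributions (from $N$, from $\sum_q q^2$ and the cross terms, and from the elementary sums) and verify that the a priori cubic-in-$k$ term $\tfrac{a'^2b'(k-1)k(2k-1)}{6}$ appearing in $\sum_{x}x\lfloor b'x/a'\rfloor$ becomes quadratic after division by $a=ka'$ and cancels against its $\Sigma_y$-counterpart, so that $S$ collapses to the claimed closed form. As consistency checks I would treat the degenerate cases $a'=1$ or $b'=1$ (where $s(b',a')=0$) separately, and observe that $k=1$ recovers Mordell's identity.
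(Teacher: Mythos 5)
Your proposal is correct, but it follows a genuinely different route from the paper's. The paper quotes Mordell's identity for the coprime case and handles general $k=\gcd(a,b)$ by a planar dissection: the triangle is cut into $k$ translated copies of the coprime triangle with legs $a',b'$, together with rectangles and boundary segments, each piece evaluated by Mordell's formula or by elementary double sums. You instead work arithmetically and self-containedly: the column counts $n_x=b-1-\lfloor b'x/a'\rfloor$ (your remark that the case $a'\mid x$ yields the same expression is exactly the needed check), the Euclidean division $x=a'q+r$ --- which is the one-dimensional shadow of the paper's dissection, with $q$ indexing the diagonal bands --- and then the sawtooth/Dedekind-sum evaluation of $\sum_r r\lfloor b'r/a'\rfloor$, closed off by Dedekind reciprocity. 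All of your ingredients check out: your weighted-sum formula is right (for instance $a'=3$, $b'=2$ gives $\tfrac{20}{6}-3s(2,3)-\tfrac{3}{2}=2$, as it should, using $s(2,3)=-\tfrac{1}{18}$), and the final collation does close up --- for $(a,b)=(4,6)$ your pieces assemble to $\tfrac{23}{12}$, in agreement with the right-hand side of \eqref{eq:mordell-type-sum}; the general case is routine polynomial algebra once reciprocity is substituted. As for what each approach buys: the paper's argument is short because the arithmetic core is outsourced to Mordell, at the cost of leaving the dissection bookkeeping to the reader; yours re-derives the coprime case rather than citing it, makes the provenance of every term in the answer visible (the $\tfrac{1}{12a'b'}$ comes from reciprocity, the correction linear in $k-1$ from the $q$-sums), and is in fact closer in spirit to Mordell's own method of generalized Dedekind sums --- at the price of a heavier, though entirely mechanical, final assembly.
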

\begin{proof}

The case when $a,b$ are relatively prime follows immediately from the computation of Mordell \cite[Equation (4)]{Mordell}. The sum in \eqref{eq:mordell-type-sum} is
\begin{equation}\label{eq:conj-xa-yb}
    \frac{(a-1)(b-1)}{6}-\frac{(a-1)(b-1)(a+b+1)}{12ab}.
\end{equation}
In general, we describe the main steps of the reasoning and leave the details to the reader. Let $k=\gcd(a,b)$, and decompose $a=a^{\prime}k$ and $b=b^{\prime}k$. We cut the triangle into smaller pieces, as in the  following picture (for which $k=4$):\bigskip

\begin{center}
\tikzset{every picture/.style={line width=0.75pt}} 

\begin{tikzpicture}[x=0.75pt,y=0.75pt,yscale=-0.7,xscale=0.7]

\draw    (119,2) -- (121,382) ;
\draw    (48,347.5) -- (596,346.5) ;
\draw    (119,30) -- (523,347) ;
\draw  [dash pattern={on 4.5pt off 4.5pt}]  (120,110) -- (220,110) ;
\draw  [dash pattern={on 4.5pt off 4.5pt}]  (220,110) -- (221,188) ;
\draw  [dash pattern={on 4.5pt off 4.5pt}]  (221,188) -- (321,188) ;
\draw  [dash pattern={on 4.5pt off 4.5pt}]  (321,188) -- (322,266) ;
\draw  [dash pattern={on 4.5pt off 4.5pt}]  (322,266) -- (422,266) ;
\draw  [dash pattern={on 4.5pt off 4.5pt}]  (422,266) -- (423,344) ;
\draw  [dash pattern={on 4.5pt off 4.5pt}]  (221,188) -- (221,346) ;
\draw  [dash pattern={on 4.5pt off 4.5pt}]  (322,266) -- (322,347) ;

\draw (112,26.5) node [anchor=north west][inner sep=0.75pt]    {$\bullet $};
\draw (516,342) node [anchor=north west][inner sep=0.75pt]    {$\bullet $};
\draw (-24,20.4) node [anchor=north west][inner sep=0.75pt]    {$(0,b)=( 0,b^{\prime}k)$};
\draw (515,355.4) node [anchor=north west][inner sep=0.75pt]    {$(a,0)=( a^{\prime}k,0)$};
\draw (64,355.4) node [anchor=north west][inner sep=0.75pt]    {$(0,0)$};
\draw (213,105) node [anchor=north west][inner sep=0.75pt]    {$\bullet  $};
\draw (314.5,184) node [anchor=north west][inner sep=0.75pt]    {$\bullet  $};
\draw (414,262) node [anchor=north west][inner sep=0.75pt]    {$\bullet  $};
\draw (113,342) node [anchor=north west][inner sep=0.75pt]    {$\bullet  $};
\draw (225,87.4) node [anchor=north west][inner sep=0.75pt]    {$( a,b^{\prime}( k-1))$};
\draw (328,171.4) node [anchor=north west][inner sep=0.75pt]    {$( 2a^{\prime},b^{\prime}( k-2))$};

\end{tikzpicture}
\end{center}
\bigskip
In the picture, there are $k$ small triangles obtained by appropriately translating the triangle of vertices $(0,0)$, $(a^{\prime},0)$, $(0,b^{\prime})$. The rest is divided into rectangles. After taking into account the appropriate translations, the evaluation of the sum in \eqref{eq:mordell-type-sum} restricted to the interior points of the smaller triangles reduces to Mordell's computation \eqref{eq:conj-xa-yb}. The evaluation of the sum \eqref{eq:mordell-type-sum} on the interior lattice points of the rectangles is elementary and reduces to some double sums of consecutive integers. Then we add the contribution from the dashed lines, which are dealt with in the same way. 
This concludes the proof.
\end{proof}
\begin{proposition}
    The strong form of the conjecture is true for the singularities of the form \eqref{eq:qhomdim1:1}.
\end{proposition}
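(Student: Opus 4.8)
The plan is to verify the strong form of the conjecture, namely $\widetilde{p}_g \leq \frac{\mu-1}{(n+2)!} = \frac{\mu-1}{6}$ (since $n=1$ here), for the singularity $x^a + y^b = 0$. First I would record the two relevant invariants. The Milnor number is computed directly from the quasi-homogeneous formula \eqref{eq:qhommilnor} with weights $w_0 = 1/a$ and $w_1 = 1/b$, giving $\mu = (a-1)(b-1)$. For the spectral genus, formula \eqref{eq:qhomspectralgenus} expresses $\widetilde{p}_g$ as a sum of $1 - k_0 w_0 - k_1 w_1 = 1 - \frac{x}{a} - \frac{y}{b}$ over the interior lattice points of the Newton polyhedron, which for this homogeneous-type singularity is exactly the triangle with vertices $(0,0)$, $(a,0)$, $(0,b)$. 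Thus $\widetilde{p}_g$ is precisely the sum evaluated in Proposition \ref{prop:Mordell}.

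With these identifications in hand, the strong form of the conjecture becomes the purely arithmetic inequality
\begin{displaymath}
    \frac{(a-1)(b-1)}{6} - \frac{a'+b'}{12}(k-1) - \frac{(a'-1)(b'-1)(a'+b'+1)}{12\,a'b'} \;\leq\; \frac{(a-1)(b-1) - 1}{6},
\end{displaymath}
where $k = \gcd(a,b)$, $a = ka'$, $b = kb'$. After canceling the common term $\frac{(a-1)(b-1)}{6}$ from both sides, this reduces to showing that
\begin{displaymath}
    \frac{1}{6} \;\leq\; \frac{a'+b'}{12}(k-1) + \frac{(a'-1)(b'-1)(a'+b'+1)}{12\,a'b'}.
\end{displaymath}
So the entire theorem in this case collapses to this single elementary inequality in the integers $a', b' \geq 1$ and $k \geq 1$, and the main work is to establish it by elementary estimation.

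The key step is then to bound the right-hand side from below by $\frac{1}{6}$. I would treat two regimes. When $k \geq 2$, the first term alone contributes at least $\frac{a'+b'}{12} \geq \frac{2}{12} = \frac{1}{6}$ (using $a', b' \geq 1$), so the inequality holds with room to spare, and the second term is nonnegative. When $k = 1$ (the coprime case), the first term vanishes and I must verify that $\frac{(a-1)(b-1)(a+b+1)}{12\,ab} \geq \frac{1}{6}$, i.e. $(a-1)(b-1)(a+b+1) \geq 2ab$. Since $a, b \geq 2$, one has $(a-1)(b-1) \geq \frac{ab}{4}$ in general but more usefully $a + b + 1 \geq 5$ and $(a-1)(b-1) = ab - a - b + 1$; expanding and rearranging reduces the claim to a concrete polynomial inequality in $a, b$ that holds for all $a, b \geq 2$, with equality-type analysis identifying the extremal small cases (such as $a=b=2$, the ordinary double point, where $\mu = 1$ and the bound is attained).

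The main obstacle I anticipate is purely bookkeeping in the boundary case $k=1$ with the smallest values of $a$ and $b$: the inequality is tight precisely at the ordinary double point $a=b=2$, so any crude estimate that loses a constant factor will fail there, and I must argue carefully enough to capture the sharp equality case rather than settle for an asymptotic bound. Once the coprime inequality $(a-1)(b-1)(a+b+1) \geq 2ab$ is verified directly for $a, b \geq 2$ and the $k \geq 2$ case is dispatched by the first term, the proof is complete. I expect no genuine difficulty beyond this careful elementary estimation, since Proposition \ref{prop:Mordell} has already done the hard combinatorial work of evaluating the lattice-point sum in closed form.
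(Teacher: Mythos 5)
Your reduction is essentially the same as the paper's: identify $\mu = (a-1)(b-1)$ and $\widetilde{p}_g$ with the lattice-point sum of Proposition \ref{prop:Mordell}, then split on $k = \gcd(a,b)$; your $k \geq 2$ branch, where the term $\frac{a'+b'}{12}(k-1) \geq \frac{2}{12} = \frac{1}{6}$ alone suffices, is exactly the paper's argument. However, your coprime case contains a genuine error. The inequality you propose to verify, $(a-1)(b-1)(a+b+1) \geq 2ab$ \emph{for all} $a, b \geq 2$, is false: at $a = b = 2$ the left side is $5$ and the right side is $8$. Your "equality-type analysis" misidentifies $a = b = 2$ as the tight case of this inequality; in fact the ordinary double point has $\gcd(2,2) = 2$, so it does not belong to the $k=1$ branch at all --- it is the equality case of the $k \geq 2$ branch, where $a' = b' = 1$ and $k = 2$ give exactly $\frac{1}{6}$.

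The fix is the one the paper makes explicitly: in the coprime case, $a$ and $b$ cannot both equal $2$ (they would share the factor $2$), so without loss of generality $a \geq 2$ and $b \geq 3$. Under this restriction the inequality does hold --- for instance at $a = 2$ it reduces to $(b-3)(b+1) \geq 0$ --- with equality attained at $(a,b) = (2,3)$, the cusp, not at the double point. As written, your verification would break down precisely at the point you singled out as the sharp case, so coprimality must genuinely enter the $k=1$ argument; once it does, your proof goes through and coincides with the paper's.
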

\begin{proof}
Recall by \eqref{eq:qhomspectralgenus} that $\widetilde{p}_g$ is given by the sum in \eqref{eq:mordell-type-sum}. We first assume that $a$ and $b$ are relatively prime. Without loss of generality, we may suppose that $a \geq 2$ and $b \geq 3$. Then, by the formula of Proposition \ref{prop:Mordell} : 
\begin{displaymath}
    \frac{\mu}{6}- \widetilde{p}_g = \frac{(a-1)(b-1)(a+b+1)}{12ab} \geq \frac{1}{6}.
\end{displaymath}
In the general case, the same formula with $k\geq 2$, combined with the value of the Milnor number, gives
\begin{displaymath}
   \frac{\mu}{6}-\widetilde{p}_{g}=\frac{a^{\prime}+b^{\prime}}{12}(k-1)+\frac{(a^{\prime}-1)(b^{\prime}-1)(a^{\prime}+b^{\prime}+1)}{12a^{\prime}b^{\prime}}.
\end{displaymath}
 We then have 
\begin{displaymath}
    \frac{\mu}{6}-\widetilde{p}_{g}\geq \frac{a^{\prime}+b^{\prime}}{12}(k-1)\geq \frac{1}{6}.
\end{displaymath}
This concludes the proof.
\end{proof}

\subsection{} We are now in a position to address the conjecture for the remaining quasi-homogeneous singularities in dimension one, which reduce to Proposition \ref{prop:Mordell}. For these cases, we don't state the explicit expressions for the spectral genera, since they quickly become unmanageable. 

\begin{corollary}
The strong form of the conjecture is true for the singularities of the form \eqref{eq:qhomdim1:2} and \eqref{eq:qhomdim1:3}.
\end{corollary}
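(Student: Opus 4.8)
The plan is to reduce both families to the Mordell-type sum computed in Proposition \ref{prop:Mordell}, exactly as announced in the preamble. Write $S(a,b)$ for the left-hand side of \eqref{eq:mordell-type-sum}, i.e. the sum of $1-\frac{x}{a}-\frac{y}{b}$ over the interior lattice points of the triangle with vertices $(0,0),(a,0),(0,b)$. First I would record the weights and the Milnor number via \eqref{eq:qhommilnor}. For \eqref{eq:qhomdim1:2}, namely $f=x(x^a+y^b)=x^{a+1}+xy^b$, the two monomials sit at $(a+1,0)$ and $(1,b)$, giving $w_0=1/(a+1)$, $w_1=a/(b(a+1))$ and $\mu=(a+1)(b-1)+1$. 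For \eqref{eq:qhomdim1:3}, namely $f=xy(x^a+y^b)=x^{a+1}y+xy^{b+1}$, the monomials sit at $(a+1,1)$ and $(1,b+1)$, giving $w_0=b/(ab+a+b)$, $w_1=a/(ab+a+b)$ and $\mu=(a+1)(b+1)$. By \eqref{eq:qhomspectralgenus}, $\widetilde{p}_g$ is the sum of $1-k_0w_0-k_1w_1$ over the interior lattice points of the lower Newton polyhedron, which in each case is the triangle spanned by the origin and the compact edge carrying the two monomials. A short check shows that, because $k_0,k_1\ge 1$, this set of interior points is cut out purely by $bk_0+ak_1<b(a+1)$ in case \eqref{eq:qhomdim1:2} and by $bk_0+ak_1<ab+a+b$ in case \eqref{eq:qhomdim1:3}, the two remaining facet inequalities being automatic.

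Next I would translate the lattice so as to land on Mordell's standard triangle. In case \eqref{eq:qhomdim1:2}, set $x=k_0-1$, $y=k_1$; the constraint becomes $x\ge 0,\ y\ge 1,\ bx+ay<ab$, and the summand becomes $\frac{a}{a+1}\bigl(1-\frac{x}{a}-\frac{y}{b}\bigr)$. The sum thus splits as the interior of the triangle $(0,0),(a,0),(0,b)$ (contributing $S(a,b)$) plus the boundary segment $x=0,\ 1\le y\le b-1$ (an arithmetic series summing to $(b-1)/2$), so that
\[
\widetilde{p}_g=\frac{a}{a+1}\left(S(a,b)+\frac{b-1}{2}\right).
\]
In case \eqref{eq:qhomdim1:3}, set $x=k_0-1$, $y=k_1-1$; the constraint becomes $x,y\ge 0,\ bx+ay<ab$, and the summand becomes $\frac{ab}{ab+a+b}\bigl(1-\frac{x}{a}-\frac{y}{b}\bigr)$. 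Now the sum ranges over the closed triangle minus its hypotenuse, i.e. the interior (contributing $S(a,b)$) together with the origin and the two legs; the latter contribute $1+\frac{a-1}{2}+\frac{b-1}{2}=\frac{a+b}{2}$, whence
\[
\widetilde{p}_g=\frac{ab}{ab+a+b}\left(S(a,b)+\frac{a+b}{2}\right).
\]

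Finally I would substitute the closed form of $S(a,b)$ from Proposition \ref{prop:Mordell} and verify the strong inequality $\widetilde{p}_g\le (\mu-1)/6$ directly. Using $\mu-1=(a+1)(b-1)$ in case \eqref{eq:qhomdim1:2} and $\mu-1=ab+a+b$ in case \eqref{eq:qhomdim1:3}, the two bounds become
\[
S(a,b)+\frac{b-1}{2}\le\frac{(a+1)^2(b-1)}{6a},\qquad S(a,b)+\frac{a+b}{2}\le\frac{(ab+a+b)^2}{6ab}.
\]
The hard part will be that these are sharp at leading order in $a,b$ (both sides grow like $ab/6$), so the verification genuinely depends on the lower-order terms: one must carry the full expression of Proposition \ref{prop:Mordell}, including the $\gcd(a,b)$-dependent correction, and bound it carefully rather than rely on the dominant term. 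I would also dispose of the small or degenerate cases ($a=1$ or $b=1$, where $S(a,b)=0$ and the formulas above remain valid) by hand. Once the elementary estimates on the correction terms are in place, both inequalities follow and the strong form holds for \eqref{eq:qhomdim1:2} and \eqref{eq:qhomdim1:3}.
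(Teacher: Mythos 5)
Your proposal is correct and follows essentially the same route as the paper: shift the lattice points onto Mordell's standard triangle, evaluate the interior contribution via Proposition \ref{prop:Mordell} and the edge/origin contributions as arithmetic series, and compare with the Milnor number. The only remark worth adding is that the final verification you defer is immediate rather than delicate: after substituting Mordell's formula the leading terms cancel exactly, and the differences reduce to sums of manifestly non-negative terms, namely $\frac{b-1}{6a}+\frac{a'+b'}{12}(k-1)+\frac{(a'-1)(b'-1)(a'+b'+1)}{12a'b'}$ in case \eqref{eq:qhomdim1:2}, and $\frac{a^{2}+ab+b^{2}}{6ab}$ plus the same $\gcd$-dependent corrections in case \eqref{eq:qhomdim1:3}, so no careful estimation of lower-order terms is actually required.
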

\begin{proof}
As in the proof of Proposition \ref{prop:Mordell}, we indicate the main steps of the argument. We begin with the case $x(x^{a}+y^{b})=0$. In this case, $\widetilde{p}_{g}$ is given by the sum

\begin{equation}\label{eq:Mordell-type-sum-xxa-xyb}
    \sum\left(1-\frac{x}{a+1}-\frac{ay}{(a+1)b}\right),
\end{equation}
over the lattice points $(x,y)$ with $x>0$, $y>0$, in the interior of the triangle with vertices $(0,0)$, $(a+1,0)$, $(1,b)$. Actually, such points necessarily lie either in the interior of the triangle $T$ with vertices $(1,0)$, $(a+1,0)$ and $(1,b)$, or on the open edge joining $(1,0)$ and $(1,b)$, as in the following picture:\bigskip

\begin{center}
\tikzset{every picture/.style={line width=0.75pt}} 

\begin{tikzpicture}[x=0.75pt,y=0.75pt,yscale=-0.7,xscale=0.7]

\draw   (119,2) -- (121,382) ;
\draw    (48,346.5) -- (218,346.5) ;
\draw    (518,346.5) -- (555,346.5) ;
\draw[line width=1.5] (218,346.5) -- (518,346.5);
\draw[line width=1.5]   (220,110) -- (523,347) ;
\draw[line width=1.5]     [dash pattern={on 4.5pt off 4.5pt}]  (221,110) -- (221,346) ; 
\draw    (220,110) -- (121.5,348) ;

\draw (213.5,106) node [anchor=north west][inner sep=0.75pt]    {$\bullet $};
\draw (513,341) node [anchor=north west][inner sep=0.75pt]    {$\bullet $};
\draw (515,355.4) node [anchor=north west][inner sep=0.75pt]    {$( a+1,0)$};
\draw (225,87.4) node [anchor=north west][inner sep=0.75pt]    {$( 1,b)$};
\draw (113,342) node [anchor=north west][inner sep=0.75pt]    {$\bullet  $};
\draw (213.5,342) node [anchor=north west][inner sep=0.75pt]    {$\bullet $};
\draw (201,355.4) node [anchor=north west][inner sep=0.75pt]    {$( 1,0)$};
\draw (64,355.4) node [anchor=north west][inner sep=0.75pt]    {$(0,0)$};

\draw[red!15, fill=red!15] (224,343.5) -- (514,343.5) -- (224,117) -- cycle;
\draw (314,251.4) node [anchor=north west][inner sep=0.75pt]  [font=\LARGE]  {$T$};

\end{tikzpicture}
\end{center}
\bigskip
The triangle $T$, colored in pink, can be translated by one unit to the left so that, after the corresponding change of variables, the evaluation of \eqref{eq:Mordell-type-sum-xxa-xyb} on the interior lattice points of $T$ reduces to the case treated in Proposition \ref{prop:Mordell}. The contribution of the lattice points on the open edge between $(1,0)$ and $(1,b)$, represented by the dashed line, reduces to a sum of consecutive integers. These computations are then combined with the value of the Milnor number, now given by $\mu=(a+1)(b-1)+1$. One concludes by inspection of the obtained expressions that $\mu/6-\widetilde{p}_{g}\geq 1/6$. 

Next for the singularity $xy(x^{a}+y^{b})=0$. In this case, the spectral genus is given by the sum
\begin{equation}\label{eq:Mordell-type-sum-xyxa-xyyb}
    \sum\left(1-\frac{bx}{(a+1)(b+1)-1}-\frac{ay}{(a+1)(b+1)-1}\right),
\end{equation}
over the lattice points $(x,y)$ either in the interior of the triangle $T$ with vertices $(1,1)$, $(a+1,1)$ and $(1,b+1)$, or on the edge joining $(1,1)$ and $(a+1,1)$, or on the edge joining $(1,1)$ and $(1,b+1)$. Notice that $(1,1)$ is one such point. This is represented in the following picture, where $T$ is again colored in pink:\bigskip

\begin{center}
\tikzset{every picture/.style={line width=0.75pt}} 

\begin{tikzpicture}[x=0.75pt,y=0.75pt,yscale=-0.7,xscale=0.7]

\draw    (119,2) -- (121,382) ;
\draw    (48,347.5) -- (596,346.5) ;
\draw[line width=1.5]    (220,110) -- (516.5,245) ;
\draw[line width=1.5]  [dash pattern={on 4.5pt off 4.5pt}]  (220,110) -- (220,248) ;
\draw    (220,110) -- (121.5,348) ;
\draw    (516.5,247) -- (121.5,348) ;
\draw[line width=1.5]  [dash pattern={on 4.5pt off 4.5pt}]  (220.5,248) -- (516.5,248) ;

\draw (212.5,106) node [anchor=north west][inner sep=0.75pt]    {$\bullet $};
\draw (508,241) node [anchor=north west][inner sep=0.75pt]    {$\bullet $};
\draw (526,235.4) node [anchor=north west][inner sep=0.75pt]    {$( a+1,1)$};
\draw (225,87.4) node [anchor=north west][inner sep=0.75pt]    {$( 1,b+1)$};
\draw (113,342) node [anchor=north west][inner sep=0.75pt]    {$\bullet  $};
\draw (296,186.4) node [anchor=north west][inner sep=0.75pt]  [font=\LARGE]  {$T$};
\draw (64,355.4) node [anchor=north west][inner sep=0.75pt]    {$(0,0)$};
\draw (187,258.4) node [anchor=north west][inner sep=0.75pt]    {$( 1,1)$};
\draw (213,242) node [anchor=north west][inner sep=0.75pt]    {$\bullet $};

\draw[red!15, fill=red!15] (223,115) -- (508,245) -- (223,245) -- cycle;

\draw (296,186.4) node [anchor=north west][inner sep=0.75pt]  [font=\LARGE]  {$T$};
\end{tikzpicture}
\end{center}
\bigskip
The triangle can be translated so that $(1,1)$ is sent to the origin. The evaluation of \eqref{eq:Mordell-type-sum-xyxa-xyyb} on the interior points of $T$ is then covered by Proposition \ref{prop:Mordell}. The sums on lattice points on the edges reduce to sums of consecutive integers. The result of the computation is then combined with the value of the Milnor number $\mu=(a+1)(b+1)$, and an examination of the expression yields again the bound $\mu/6-\widetilde{p}_{g}\geq 1/6$.
\end{proof}

\section{Irreducible curve singularities}
In this section, we consider the conjecture for an irreducible germ of a plane curve singularity, defined by $f\colon (\CBbb^{2},0)\to (\CBbb,0)$. In this case, the spectrum can be described explicitly in terms of Puiseux pairs. This is discussed in an unpublished paper of M. Saito \cite{MSaito:Hertling}, whose presentation and notation we follow. The same result can be derived from the work of Schrauwen--Steenbrink--Stevens \cite{SSS}, as discussed by N\'emethi in \cite{Nemethi:spectrum-curves}. 

\subsection{} After possibly changing variables, the equation $f(x,y)=0$ is equivalent to a Puiseux series representation for $y$ in terms of $x$:
\begin{displaymath}
    \begin{split}
        y=&\sum_{1\leq i\leq [k_{1}/n_{1}]}c_{0,i}x^{i}+\sum_{0\leq i\leq [k_{2}/n_{2}]}c_{1,i}x^{(k_{1}+i)/n_{1}}\\
        &+\sum_{0\leq i\leq [k_{3}/n_{3}]}c_{2,i}x^{k_{1}/n_{1}+(k_{2}+i)/n_{1}n_{2}}
        +\cdots+\sum_{i\geq 0}c_{g,i}x^{k_{1}/n_{1}+k_{2}/n_{1}n_{2}+\cdots+(k_{g}+i)/n_{1}\cdots n_{g}}.
    \end{split}
\end{displaymath}
The pairs $(k_{i},n_{i})$ are called Puiseux paris, and they fulfill the following properties:
\begin{enumerate}
    \item $k_{i}$ and $n_{i}$ are coprime integers.
    \item $n_{i}>1$.
    \item $k_{1}>n_{1}$ and $k_{i}>k_{i-1}n_{i}$, for $i\geq 2$. In particular, $k_{i}>1$ for all $i$. 
\end{enumerate}
In addition, we introduce positive integers $w_{i}$, defined recursively by
\begin{displaymath}
    w_{1}=k_{1},\quad w_{i}=n_{i-1}n_{i}w_{i-1}+k_{i}\quad\text{for}\quad i\geq 2.
\end{displaymath}
Hence, for $i\geq 2$ we can write
\begin{equation}\label{eq:expression-w}
    w_{i}=\left(\sum_{j=1}^{i-1}k_{j}n_{j}(n_{j+1}\cdots n_{i-1})^{2}\right)n_{i}+k_{i}.
\end{equation}
For the sake of clarity, we stress that the term of index $j=i-1$ in the sum is understood to be $k_{i-1}n_{i-1}$. We also observe that, because the integers $k_{i}$ and $n_{i}$ are coprime, the same holds for $w_{i}$ and $n_{i}$. 

\subsection{} We next express the Milnor number and the spectral genus in terms of the previous quantities. 
\begin{lemma}\label{lemma:spectrum-irreducible}
Let $f\colon (\CBbb^{2},0)\to (\CBbb,0)$ define an irreducible germ of a plane curve singularity. Let the notation be as above, and define $n_{i}^{\prime}=n_{i+1}\cdots n_{g}$, with the convention $n_{g}^{\prime}=1$. Then:
\begin{enumerate}
    \item The Milnor number is given by
\begin{displaymath}
    \mu=\sum_{i=1}^{g}(n_{i}-1)(w_{i}-1)n_{i}^{\prime}.
\end{displaymath}
\item The spectral genus is given by
\begin{displaymath}
    \widetilde{p}_{g}=\sum_{i=1}^{g}\sum\left(1-\frac{1}{n_{i}^{\prime}}\left(k+\frac{x}{n_{i}}+\frac{y}{w_{i}}\right)\right),
\end{displaymath}
where the second sum runs over the integers $k\geq 0$, $x>0$ and $y>0$, satisfying $k < n_{i}^{\prime}$ and $x/n_{i}+y/w_{i}<1$. 
\end{enumerate}
\end{lemma}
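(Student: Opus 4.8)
The plan is to deduce both formulas from the explicit description of the spectrum of $f$ in terms of the Puiseux pairs, which is the main input and is due to M. Saito \cite{MSaito:Hertling} (see also \cite{SSS, Nemethi:spectrum-curves}). Recall that, in the shifted convention \eqref{eq:spgenusMSaito}, one has $\widetilde{p}_g = \sum_{\alpha'_j < 1}(1 - \alpha'_j)$, while $\mu$ is simply the total number of spectral numbers counted with multiplicity. Thus everything reduces to pinning down the multiset of spectral numbers, and in fact, by the symmetry $\alpha' \mapsto 2 - \alpha'$ of the spectrum (here $n = 1$), to describing only the part lying in the interval $(0,1)$.

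First I would recall M. Saito's formula, which organizes the spectrum into $g$ layers indexed by the Puiseux pairs. The $i$-th layer is an $n_i'$-fold cyclic refinement of the spectrum of the quasi-homogeneous model $x^{n_i} + y^{w_i}$: concretely, the part of the spectrum below $1$ contributed by layer $i$ is, with multiplicity, enumerated by the numbers
\begin{displaymath}
\frac{1}{n_i'}\left(k + \frac{x}{n_i} + \frac{y}{w_i}\right), \qquad 0 \le k < n_i', \quad x, y > 0, \quad \frac{x}{n_i} + \frac{y}{w_i} < 1.
\end{displaymath}
The numbers $\frac{x}{n_i} + \frac{y}{w_i}$ are precisely the exponents below $1$ of the quasi-homogeneous model, as in \eqref{eq:qhomspectralgenus} and Steenbrink's spectral polynomial; the division by $n_i'$ together with the shifts $k/n_i'$ reflects the iterated cabling structure of the branch, and is what must be extracted from M. Saito's recursive/generating-function presentation. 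Establishing this layered refinement, with the correct multiplicities and index ranges, is the heart of the matter.

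Granting this description, part (2) is immediate: substituting $\alpha' = \frac{1}{n_i'}(k + \frac{x}{n_i} + \frac{y}{w_i})$ into \eqref{eq:spgenusMSaito} yields exactly the claimed double sum, and every summand is positive since the constraints $k \le n_i' - 1$ and $\frac{x}{n_i} + \frac{y}{w_i} < 1$ give $k + \frac{x}{n_i} + \frac{y}{w_i} < n_i'$. For part (1), I would count lattice points: by Pick's theorem, the number of $(x,y)$ with $x, y > 0$ and $\frac{x}{n_i} + \frac{y}{w_i} < 1$ --- the interior lattice points of the triangle with legs $n_i$ and $w_i$ --- equals $\frac{(n_i-1)(w_i-1)}{2}$, using $\gcd(n_i, w_i) = 1$ so that the hypotenuse carries no interior lattice point. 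Multiplying by $n_i'$ and summing over $i$ counts the spectral numbers in $(0,1)$; doubling via the symmetry $\alpha' \mapsto 2 - \alpha'$ then gives $\mu = \sum_i (n_i-1)(w_i-1) n_i'$. This last step uses that no spectral number equals $1$, which holds because the monodromy of an irreducible plane curve singularity has no eigenvalue $1$ (equivalently, the Alexander polynomial of a knot does not vanish at $1$); alternatively the Milnor-number formula can be quoted as classical.

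The main obstacle is the second step: faithfully recovering the layered, $n_i'$-refined form of the spectrum from M. Saito's formula, and in particular verifying that the tuples $(i, k, x, y)$ enumerate the spectral numbers below $1$ with exactly the right multiplicities --- that is, that distinct layers and distinct tuples neither silently collide nor leave gaps, so that the count underlying part (1) is correct. Once the multiset of spectral numbers is pinned down, both (1) and (2) reduce to routine bookkeeping (Pick's theorem and the symmetry of the spectrum); the positivity of the summands and the absence of the eigenvalue $1$ are comparatively minor points.
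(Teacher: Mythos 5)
Your proposal is correct and follows essentially the same route as the paper: both rest entirely on the description of the spectrum of an irreducible plane curve singularity in terms of Puiseux pairs due to M.~Saito \cite{MSaito:Hertling} (see also N\'emethi \cite{Nemethi:spectrum-curves}), which neither you nor the paper reproves, followed by the conversion between conventions as in \textsection\ref{subsec:conventions-spectrum}. The one point where you go beyond the paper's proof is part (1): the paper quotes the Milnor number formula from the same references, whereas you derive it from the spectrum description by counting the interior lattice points of the triangle with legs $n_{i}$, $w_{i}$ (the count $(n_{i}-1)(w_{i}-1)/2$ being valid since $\gcd(n_{i},w_{i})=1$), multiplying by the $n_{i}'$ choices of $k$, and doubling via the symmetry $\alpha'\mapsto 2-\alpha'$; this is legitimate because the monodromy of an irreducible plane curve singularity has no eigenvalue $1$, and it makes the statement slightly more self-contained at the cost of invoking that extra classical fact.
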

\begin{proof}
See N\'emethi \cite[Section 3]{Nemethi:spectrum-curves}, and in particular Theorem 3.1 therein, and M. Saito \cite[Theorem 1.5 \& Section 5]{MSaito:Hertling}. For the spectral genus, we refer to \textsection\ref{subsec:conventions-spectrum} above for the expression in terms of M. Saito's convention. 
\end{proof}

\begin{corollary}
With the assumptions and notation as above, we have
\begin{displaymath}
    \frac{\mu}{6}-\widetilde{p}_{g}=\frac{1}{12}\sum_{i=1}^{g}S_{i},
\end{displaymath}
where $S_{i}=S_{i}^{+}-S_{i}^{-}$ and 
\begin{displaymath}
    S_{i}^{+}=\frac{(n_{i}-1)(w_{i}-1)(n_{i}+w_{i}+1)}{n_{i}w_{i}},\quad S_{i}^{-}=(n_{i}-1)(w_{i}-1)(n_{i}^{\prime}-1).
\end{displaymath}
\end{corollary}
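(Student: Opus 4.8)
The plan is to exploit the fact that both $\mu$ and $\widetilde{p}_g$ in Lemma \ref{lemma:spectrum-irreducible} are sums over $i=1,\ldots,g$, and to prove the identity term by term: I will show that the $i$-th summand of $\frac{\mu}{6}-\widetilde{p}_g$ equals exactly $\frac{1}{12}S_i$. Fix $i$, write $T_i$ for the triangle with vertices $(0,0)$, $(n_i,0)$, $(0,w_i)$, and let $N_i$ denote the number of its interior lattice points. Recall from the excerpt that $n_i$ and $w_i$ are coprime, which will be used repeatedly.

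The first and key step is to carry out the inner sum over $k$ in the formula for $\widetilde{p}_g$. For a fixed interior lattice point $(x,y)$ of $T_i$, the quantity $1-\frac{1}{n_i^{\prime}}\bigl(k+\frac{x}{n_i}+\frac{y}{w_i}\bigr)$ is an arithmetic progression in $k$; using $\sum_{k=0}^{n_i^{\prime}-1}k=\frac{n_i^{\prime}(n_i^{\prime}-1)}{2}$, summing over $k=0,\ldots,n_i^{\prime}-1$ collapses it to $\frac{n_i^{\prime}+1}{2}-\frac{x}{n_i}-\frac{y}{w_i}$. Hence the $i$-th contribution $Q_i$ to $\widetilde{p}_g$ is
\begin{displaymath}
    Q_i=\sum_{(x,y)}\left(\frac{n_i^{\prime}+1}{2}-\frac{x}{n_i}-\frac{y}{w_i}\right)=P_i+\frac{n_i^{\prime}-1}{2}N_i,
\end{displaymath}
where $P_i=\sum_{(x,y)}\bigl(1-\frac{x}{n_i}-\frac{y}{w_i}\bigr)$ is a Mordell-type sum over the interior lattice points of $T_i$. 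This reduction is the heart of the argument: it turns the double sum into a single sum amenable to Proposition \ref{prop:Mordell} plus an elementary lattice-point count.

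Next I would evaluate the two pieces. Applying Proposition \ref{prop:Mordell} with $a=n_i$, $b=w_i$ and $\gcd(n_i,w_i)=1$ (so $k=1$ there), I obtain $P_i=\frac{(n_i-1)(w_i-1)}{6}-\frac{1}{12}S_i^{+}$, since the last term of Mordell's formula is precisely $\frac{1}{12}S_i^{+}$. For the count, Pick's theorem gives $N_i=\frac{(n_i-1)(w_i-1)}{2}$: the triangle has area $\frac{n_i w_i}{2}$ and $n_i+w_i+1$ boundary lattice points, the hypotenuse carrying no interior lattice point by coprimality. Consequently $\frac{n_i^{\prime}-1}{2}N_i=\frac14 S_i^{-}$.

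Finally I would combine everything. The $i$-th contribution to $\frac{\mu}{6}$ is $\frac16(n_i-1)(w_i-1)n_i^{\prime}$, so the $i$-th contribution to $\frac{\mu}{6}-\widetilde{p}_g$ is $\frac16(n_i-1)(w_i-1)n_i^{\prime}-Q_i$. Substituting the expressions for $P_i$ and $N_i$, and using $\frac{(n_i-1)(w_i-1)}{6}(n_i^{\prime}-1)=\frac16 S_i^{-}$, the terms reorganize as $\frac16 S_i^{-}+\frac{1}{12}S_i^{+}-\frac14 S_i^{-}=\frac{1}{12}S_i^{+}-\frac{1}{12}S_i^{-}=\frac{1}{12}S_i$. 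Summing over $i$ yields the claim. The whole argument is essentially bookkeeping once the $k$-sum is performed; the only place where genuine input enters is Proposition \ref{prop:Mordell}, and the main thing to watch is that the correction term $\frac{n_i^{\prime}-1}{2}N_i$ combines correctly with the $\frac{\mu}{6}$ term so that the coefficients of $S_i^{-}$ cancel down to $-\frac{1}{12}$.
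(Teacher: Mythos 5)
Your proposal is correct and follows essentially the same route as the paper, whose proof is exactly the one-line sketch ``elementary computation using the expressions of Lemma \ref{lemma:spectrum-irreducible} and Proposition \ref{prop:Mordell} in the coprime case'': you have simply supplied the bookkeeping the paper omits (performing the $k$-sum, invoking Mordell's formula with $\gcd(n_i,w_i)=1$, and the standard count $N_i=\tfrac{(n_i-1)(w_i-1)}{2}$ of interior lattice points). All the individual steps check out, so this is a faithful filling-in of the paper's intended argument rather than a different proof.
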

\begin{proof}
This is an elementary computation using the expressions provided in Lemma \ref{lemma:spectrum-irreducible} and applying Proposition \ref{prop:Mordell} in the case when $a$ and $b$ are coprime.
\end{proof}

\subsection{} We are now in a position to state and prove the main theorem of this section.
\begin{theorem}\label{thm:irreducible-curve}
The strong form of the conjecture holds for germs of irreducible plane curve singularities. More precisely,
\begin{displaymath}
    \frac{\mu}{6}-\widetilde{p}_{g}\geq \frac{1}{12}S_{1}^{+}\geq\frac{1}{6},
\end{displaymath}
and the first inequality is strict if there are two or more Puiseux pairs. 
\end{theorem}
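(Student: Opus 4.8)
The plan is to reduce everything to the preceding Corollary, which gives $\frac{\mu}{6}-\widetilde{p}_g=\frac{1}{12}\sum_{i=1}^g S_i$ with $S_i=S_i^+-S_i^-$. Since $S_1=S_1^+-S_1^-$, the first asserted inequality $\frac{\mu}{6}-\widetilde{p}_g\ge\frac{1}{12}S_1^+$ is equivalent to $\sum_{i=1}^g S_i\ge S_1^+$, and the second, $\frac{1}{12}S_1^+\ge\frac16$, is equivalent to $S_1^+\ge2$. I treat these separately. For $S_1^+\ge2$: because $w_1=k_1$ and $n_1,k_1$ are coprime with $k_1>n_1>1$, the number $S_1^+=\frac{(n_1-1)(k_1-1)(n_1+k_1+1)}{n_1k_1}$ is exactly $12$ times the Mordell sum of Proposition \ref{prop:Mordell} in the coprime case, with $(a,b)=(n_1,k_1)$. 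Writing $\frac{n_1+k_1+1}{n_1k_1}=\frac1{n_1}+\frac1{k_1}+\frac1{n_1k_1}$ makes it transparent that $S_1^+$ is increasing in each of $n_1,k_1$, so its minimum over the admissible range $n_1\ge2$, $k_1\ge n_1+1$ is attained at the cusp $(n_1,k_1)=(2,3)$, where $S_1^+=2$; this gives $S_1^+\ge2$, with equality exactly for $x^2+y^3$.

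The heart of the matter is $\sum_{i=1}^g S_i\ge S_1^+$, which I would prove by induction on the number $g$ of Puiseux pairs. The main obstacle is that a term-by-term comparison of the $S_i^+$ against the $S_i^-$ fails: each $S_i^-=(n_i-1)(w_i-1)(n_i'-1)$ carries the full tail factor $n_i'-1=n_{i+1}\cdots n_g-1$, which no single positive term $S_{i+1}^+$ can absorb, so one must peel off one Puiseux pair at a time and let the rapidly growing $w_i$ do the work. Write $\Sigma^{(h)}$ for the analogous sum attached to the truncated data $(n_1,k_1),\dots,(n_h,k_h)$ — a legitimate $h$-pair configuration, with the same $w_i$ and with tail products $\widetilde n_i'=n_{i+1}\cdots n_h$. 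The base case $g=1$ is the equality $\Sigma^{(1)}=S_1^+$ (here $S_1^-=0$).

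For the inductive step I would compute, using $n_i'=\widetilde n_i'\,n_g$ and $S_g^-=0$, the clean difference
\[
\Sigma^{(g)}-\Sigma^{(g-1)}=(n_g-1)\bigl(C_g-B_{g-1}\bigr),\qquad C_g:=\frac{(w_g-1)(n_g+w_g+1)}{n_gw_g},
\]
where $B_{h}:=\sum_{i=1}^{h}(n_i-1)(w_i-1)\,n_{i+1}\cdots n_{h}$ (empty product $=1$ when $i=h$) satisfies the recursion $B_h=(n_h-1)(w_h-1)+n_hB_{h-1}$, with $B_0=0$. It then remains to prove the auxiliary inequality $C_j>B_{j-1}$ for all $j\ge2$, which I would establish by a second induction on $j$. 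Assuming $B_{j-2}\le C_{j-1}$, the recursion for $B$ gives $B_{j-1}\le(n_{j-1}-1)(w_{j-1}-1)+n_{j-1}C_{j-1}$, and a short simplification bounds the right-hand side by $n_{j-1}w_{j-1}+1$, hence by $n_{j-1}w_{j-1}$ since both are integers. On the other hand, the recursion $w_j=n_{j-1}n_jw_{j-1}+k_j$ gives $\frac{w_j-1}{n_j}=n_{j-1}w_{j-1}+\frac{k_j-1}{n_j}$, and since $C_j>\frac{w_j-1}{n_j}$ (because $\frac{n_j+w_j+1}{w_j}>1$) and $k_j\ge2$, we get $C_j>n_{j-1}w_{j-1}\ge B_{j-1}$, closing the inner induction.

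Feeding $C_g>B_{g-1}$ back into the displayed identity yields $\Sigma^{(g)}>\Sigma^{(g-1)}\ge S_1^+$ for $g\ge2$, which is simultaneously the desired inequality $\frac{\mu}{6}-\widetilde{p}_g\ge\frac{1}{12}S_1^+$ and its strictness for two or more Puiseux pairs; combined with $S_1^+\ge2$ this gives the full chain $\frac{\mu}{6}-\widetilde{p}_g\ge\frac{1}{12}S_1^+\ge\frac16$. The points requiring care are verifying that the truncated data genuinely satisfies the Puiseux axioms (coprimality, $n_i>1$, $k_1>n_1$, $k_i>k_{i-1}n_i$) so that the outer induction applies, and checking that the strict inequalities produced by the inner induction suffice to feed its non-strict hypotheses at the next index.
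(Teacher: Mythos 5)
Your proof is correct, and it takes a genuinely different route from the paper's. The paper argues globally rather than by peeling off pairs: it lower-bounds $S_i^{+}$ by $T_i^{+}=(n_i-1)\sum_{j=1}^{i-1}k_jn_j(n_{j+1}\cdots n_{i-1})^{2}$ (discarding the factor $(n_i+w_i+1)/w_i>1$ and the contribution of $k_i$ to $w_i$), upper-bounds $S_i^{-}$ by $T_i^{-}=(n_i-1)(n_i^{\prime}-1)w_i$, and then expands $\sum_{i=2}^{g}T_i^{+}-\sum_{i=1}^{g-1}T_i^{-}$ as a linear form $a_1k_1+\cdots+a_gk_g$, showing each coefficient is positive by telescoping sums (it computes $a_1=n_2\cdots n_g-1$ explicitly and leaves the other coefficients to the reader). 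Your truncation scheme instead proves the sharper structural statement that $\Sigma^{(h)}$ is strictly increasing in $h$, via the identity $\Sigma^{(g)}-\Sigma^{(g-1)}=(n_g-1)(C_g-B_{g-1})$ and the nested induction $C_j>B_{j-1}$; this avoids the coefficient bookkeeping entirely, at the cost of the auxiliary inequality, and it isolates where strictness for $g\geq 2$ comes from. Your monotonicity proof of $S_1^{+}\geq 2$ (minimum at the cusp $(n_1,k_1)=(2,3)$) also supplies a detail that the paper only asserts, implicitly deferring to its quasi-homogeneous section.

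Two small points of precision, neither a gap. First, in the inner induction, the deduction $B_{j-1}\leq n_{j-1}w_{j-1}$ ``since both are integers'' is only valid from the \emph{strict} bound $B_{j-1}<n_{j-1}w_{j-1}+1$: an integer that is merely $\leq n_{j-1}w_{j-1}+1$ could equal $n_{j-1}w_{j-1}+1$. Your simplification does in fact produce strictness, since $n_{j-1}C_{j-1}=n_{j-1}+w_{j-1}-(n_{j-1}+1)/w_{j-1}$ is strictly less than $n_{j-1}+w_{j-1}$, so the right-hand side is $n_{j-1}w_{j-1}+1-(n_{j-1}+1)/w_{j-1}$; you should just state the strict form. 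Second, the worry about the truncated data satisfying the Puiseux axioms is moot: the Corollary is invoked only for the full germ, and $\Sigma^{(h)}$, $B_h$, $C_j$ are purely numerical quantities, so all your argument uses are the relations $n_i\geq 2$, $k_i\geq 2$ and $w_i=n_{i-1}n_iw_{i-1}+k_i$ (though the axioms are indeed inherited by truncation, since each one only constrains indices up to $h$).
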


\begin{proof}
Observe that $S_{g}^{-}=0$, since $n_{g}^{\prime}=1$. Hence, the case $g=1$ is trivial and we may assume that $g\geq 2$. We have to prove
\begin{displaymath}
    \sum_{i=2}^{g}S_{i}^{+}-\sum_{i=1}^{g-1}S_{i}^{-}> 0.
\end{displaymath}

First of all, for $i\geq 2$, we provide a lower bound for $S_{i}^{+}$ which is simpler to deal with. For this, using that $k_{i}>1$ in \eqref{eq:expression-w}, and that $(n_{i}+w_{i}+1)/w_{i}>1$, we see that
\begin{equation}\label{eq:Si++}
    S_{i}^{+}>T_{i}^{+}:=(n_{i}-1)\sum_{j=1}^{i-1}k_{j}n_{j}(n_{j+1}\cdots n_{i-1})^{2},
\end{equation}
where the summand of index $j=i-1$ is understood to be $k_{i-1}n_{i-1}$. 

Next, for all $i$, $S_{i}^{-}$, has the following simple upper bound:
\begin{equation}\label{eq:Si-}
    \begin{split}
        S_{i}^{-}\leq T_{i}^{-}:&=(n_{i}-1)(n_{i}^{\prime}-1)w_{i}\\
        &=(n_{i}-1)(n_{i+1}\cdots n_{g}-1)\left(k_{i}+\sum_{j=1}^{i-1}k_{j}n_{j}(n_{j+1}\cdots n_{i-1})^{2}n_{i}\right).
    \end{split}
\end{equation}

Using the above bounds for $S_{i}^{+}$ and $S_{i}^{-}$, we can write
\begin{equation}\label{eq:decomposition-coefficients}
     \sum_{i=2}^{g}S_{i}^{+}-\sum_{i=1}^{g-1}S_{i}^{-}>\sum_{i=2}^{g}T_{i}^{+}-\sum_{i=1}^{g-1}T_{i}^{-}=a_{1}k_{1}+\cdots+a_{g}k_{g},
\end{equation}
and it is enough to prove that the coefficients $a_{i}$ are positive. We will discuss the coefficient $a_{1}$. The other coefficients are dealt with similarly.

We denote by $P_{i}$ the coefficient of $k_{1}$ in $T_{i}^{+}$. Similarly, we denote by $N_{i}$ the coefficient of $k_{1}$ in $T_{i}^{-}$. Inspecting the expanded expressions \eqref{eq:Si++} and \eqref{eq:Si-} for $T_{i}^{+}$ and $T_{i}^{-}$, we find the following. For the coefficients $P_{i}$, we have
\begin{displaymath}
    P_{2}=n_{1}(n_{2}-1)
\end{displaymath}
and
\begin{displaymath}
    P_{i}=n_{1}(n_{2}\cdots n_{i-1})^{2}(n_{i}-1)
\end{displaymath}
 for $i\geq 3$. For the coefficients $N_{i}$, we have
\begin{displaymath}
    N_{1}=(n_{1}-1)(n_{2}\cdots n_{g}-1)
\end{displaymath}
and
\begin{displaymath}
    N_{2}=n_{1}n_{2}(n_{2}-1)(n_{3}\cdots n_{g}-1),
\end{displaymath}
while for $i\geq 3$ we decompose $N_{i}=N_{i,a}+N_{i,b}$, where
\begin{displaymath}
    N_{i,a}=n_{1}(n_{2}\cdots n_{i})^{2}n_{i+1}\cdots n_{g}-n_{1}(n_{2}\cdots n_{i-1})^{2}n_{i}\cdots n_{g}
\end{displaymath}
and
\begin{displaymath}
    N_{i,b}=n_{1}(n_{2}\cdots n_{i-1})^{2}n_{i}-n_{1}(n_{2}\cdots n_{i})^{2}.
\end{displaymath}

The sum of the coefficients $N_{i,a}$ is a telescopic sum, with value
\begin{displaymath}
    \sum_{i=3}^{g-1}N_{i,a}=n_{1}(n_{2}\cdots n_{g-1})^{2}n_{g}-n_{1}n_{2}^{2}n_{3}\cdots n_{g}. 
\end{displaymath}
Adding the contributions of $P_{2}$, $N_{1}$ and $N_{2}$, we obtain
\begin{equation}\label{eq:Nia-telescopic-P2-N1-N2}
    P_{2}-N_{1}-N_{2}-\sum_{i=3}^{g-1}N_{i,a}=n_{2}\cdots n_{g}+n_{1}n_{2}^{2}-n_{1}(n_{2}\cdots n_{g-1})^{2}n_{g}-1. 
\end{equation}

Next, we consider the coefficients $P_{i}$ together with the coefficients $N_{i,b}$, for $3\leq i\leq g-1$. More precisely,
\begin{displaymath}
    P_{i}-N_{i,b}=n_{1}(n_{2}\cdots n_{i})^{2}-n_{1}(n_{2}\cdots n_{i-1})^{2},
\end{displaymath}
which again gives rise to a telescopic sum:
\begin{displaymath}
    \sum_{i=3}^{g-1}(P_{i}-N_{i,b})=n_{1}(n_{2}\cdots n_{g-1})^{2}-n_{1}n_{2}^{2}.
\end{displaymath}
Adding the coefficient $P_{g}=n_{1}(n_{2}\cdots n_{g-1})^{2}(n_{g}-1)$, we find
\begin{equation}\label{eq:Pi-Nib-telescopic-Pg}
    \sum_{i=3}^{g-1}(P_{i}-N_{i,b}) + P_{g}=n_{1}(n_{2}\cdots n_{g-1})^{2}n_{g}-n_{1}n_{2}^{2}.
\end{equation}

To conclude, we add \eqref{eq:Nia-telescopic-P2-N1-N2} and \eqref{eq:Pi-Nib-telescopic-Pg}, which yields the coefficient $a_{1}$ in \eqref{eq:decomposition-coefficients}:
\begin{displaymath}
    a_{1}=n_{2}\cdots n_{g}-1>0.
\end{displaymath}
This completes the proof.
\end{proof}

\section{Generic singularities with large Newton polyhedra}\label{section:large-Newton}

In this section, we consider isolated hypersurface singularities which are non-degenerate and convenient in the sense of Kouchnirenko \cite{Kouchnirenko}.\footnote{In \emph{op. cit.}, which is written in French, convenient is called \emph{commode}.} These conditions are formulated in terms of the Newton polyhedron. The non-degeneracy condition is shown to be generic for the Zariski topology. Therefore, we may refer to such singularities as generic. Convenient means that the Newton polyhedron intersects all the coordinate hyperplanes. We refer to \emph{op. cit.} for details. 

\subsection{}\label{subsec:Kouchnirenko} Consider now a convenient and generic isolated hypersurface singularity defined by $f\colon (\CBbb^{n+1},0)\to (\CBbb,0)$. The formula of Kouchnirenko \cite[Th\'eor\`eme I]{Kouchnirenko} states that 
\begin{equation}\label{eq:Kouch}
    \mu = (n+1)!\vol(\Gamma_{-})-n! \vol_{n}(\Gamma_{-})+\ldots+(-1)^{n}\vol_{1}(\Gamma_{-})+(-1)^{n+1}.
\end{equation}
Here, $\vol$ is the standard volume in $\RBbb^{n+1}$, and $\vol_{k}(\Gamma_{-})$ refers to the volume of the lower Newton polyhedron intersected with all the coordinate subspaces of dimension $k$, with respect to the standard Lebesgue measure. In particular, $\vol_{n}(\Gamma_{-})$ is the volume of $\Gamma_{-}$ intersected with the coordinate hyperplanes.

The spectral genus can also be described explicitly in terms of the Newton polyhedron. Let $\Gamma_{c}$ be the compact Newton boundary. It is defined by a homogeneous, concave, piece-wise linear function $\phi$, which takes the value 1 on the faces of $\Gamma_{c}$. The lower Newton polyhedron can then be presented as $\Gamma_{-}=\lbrace x\in\RBbb_{+}^{n+1}\mid\phi(x)\leq 1\rbrace$. By \cite{MSaito-Newton}, in particular using the formulas on the first page, it follows that the spectral genus is given by
\begin{equation}\label{eq:spectral-genus-saito}
    \widetilde{p}_{g}=\sum_{x\in\Gamma_{-}^{\circ}\cap\ZBbb^{n+1}} (1-\phi(x)),
\end{equation}
where the sum runs over the interior lattice points of $\Gamma_{-}$. Here we have taken into account the different normalization as recalled in  \eqref{eq:spgenusMSaito}. This sum is analogous to \eqref{eq:qhomspectralgenus} but appears here in an a priori different context.
\subsection{} In order to estimate an asymptotic form of the sum \eqref{eq:spectral-genus-saito}, we will apply a rough version of the Euler--Maclaurin formula for polyhedra. For the statement, we adopt the following convention for Lebesgue measures. Let $P$ be a convex lattice polyhedron in $\RBbb^{d}$, and $Q$ a face of it. Let $\langle Q\rangle_{\RBbb}\subseteq\RBbb^{d}$ be the real vector space parallel to $Q$. Then we denote by $dx$ the Lebesgue measure on $\langle Q\rangle_{\RBbb}$ which gives volume one to the fundamental domain of the lattice $\ZBbb^{d}\cap\langle Q\rangle_{\RBbb}$.
\begin{lemma}[Berline--Vergne]\label{lemma:BerlineVergne}
 Let $P\subset\RBbb^{d}$ be a compact convex lattice polyhedron, and let $h$ be a smooth function on $P$. Then, there exists an asymptotic expansion
 \begin{equation}\label{eq:EM}
    \sum_{x\in (kP)\cap\ZBbb^{d}} h(x/k)= k^{d}\int_{P}h(x)dx +\frac{k^{d-1}}{2}\int_{\partial P}h(x)dx+O(k^{d-2}),\quad\text{as}\quad k\to +\infty.
 \end{equation}
\end{lemma}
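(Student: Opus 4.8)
The plan is to prove the two-term formula directly by a lattice-cube comparison, which exhibits both the leading Riemann-sum term and the boundary correction transparently; alternatively one may simply extract \eqref{eq:EM} as the top two coefficients of the full Berline--Vergne expansion, but the elementary argument already isolates everything we need. First I would extend $h$ to a smooth function on a neighborhood of $P$ in $\RBbb^{d}$ (a Whitney/Seeley extension), so that Taylor estimates are available across $\partial P$, and tile $\RBbb^{d}$ by the unit cubes $C_{x}=x+[-1/2,1/2)^{d}$ centered at the points $x\in\ZBbb^{d}$. Since $k^{d}\int_{P}h\,dx=\int_{kP}h(u/k)\,du=\sum_{x\in\ZBbb^{d}}\int_{C_{x}}\mathbf 1_{kP}(u)\,h(u/k)\,du$, the error can be written as a sum over cubes:
\[
\sum_{x\in kP\cap\ZBbb^{d}} h(x/k)-k^{d}\int_{P}h\,dx
=\sum_{x\in\ZBbb^{d}}\left(\mathbf 1_{kP}(x)\,h(x/k)-\int_{C_{x}}\mathbf 1_{kP}(u)\,h(u/k)\,du\right).
\]
I would then classify the cubes $C_{x}$ into three types and estimate each.

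For cubes lying entirely inside or entirely outside $kP$, the indicator is constant, and a second-order Taylor expansion of $h(u/k)$ about $x/k$ shows that the linear term integrates to zero over the symmetric cube $C_{x}$, leaving a contribution of size $O(k^{-2})$ per cube. As there are $O(k^{d})$ such cubes, their total is $O(k^{d-2})$, hence absorbed into the error; it is essential that the cubes be centered at the lattice points, so that odd moments vanish and no spurious term of order $k^{d-1}$ appears. Likewise, the cubes meeting the codimension-$\ge 2$ skeleton of $\partial(kP)$ number only $O(k^{d-2})$ and each contributes $O(1)$, so they too are swallowed by the error $O(k^{d-2})$.

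The boundary term comes entirely from the cubes meeting a single facet $F$ of $kP$ away from lower-dimensional faces, of which there are $O(k^{d-1})$. Near such a facet, with primitive integer normal $n_{F}$, the configuration is to leading order a half-space, and summing $\mathbf 1_{kP}(x)\,h(x/k)-\int_{C_{x}}\mathbf 1_{kP}(u)\,h(u/k)\,du$ along a column of cubes transverse to $F$ reduces to the one-dimensional Euler--Maclaurin endpoint correction in the direction $n_{F}$. This produces $\tfrac12\int_{F}h\,dx$ with the lattice-normalized measure on $F$, up to $O(k^{d-2})$, and summing over all facets gives $\tfrac{k^{d-1}}{2}\int_{\partial P}h\,dx$, as claimed.

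The main obstacle is precisely this facet analysis: one must verify that the transverse one-dimensional Euler--Maclaurin correction, taken in the primitive normal direction $n_{F}$, combines with the induced lattice on $\langle F\rangle_{\RBbb}$ to reproduce exactly the lattice-normalized surface measure $dx$ fixed just before the statement, thereby yielding the clean constant $\tfrac12$. The model computation is the case $d=1$, $P=[0,L]$, where the ordinary Euler--Maclaurin formula gives $\sum_{j=0}^{kL}h(j/k)=k\int_{0}^{L}h\,dx+\tfrac{h(0)+h(L)}{2}+O(k^{-1})$, matching \eqref{eq:EM} with the counting measure on the two endpoints; the general case is this computation carried out in a lattice basis adapted to $n_{F}$, using $\operatorname{covol}(\ZBbb^{d}\cap\langle F\rangle_{\RBbb})$ together with the spacing of the hyperplanes $\langle n_{F},\cdot\rangle\in\ZBbb$. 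By contrast, the interior and corner estimates require only crude counting and Taylor bounds.
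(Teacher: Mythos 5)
Your proof is correct in outline, but it takes a genuinely different route from the paper. The paper's proof is essentially a citation: it invokes the local Euler--Maclaurin formula of Berline--Vergne (their Section 5.4, Theorem 5, with the subdominant term discussed in their Section 5.5), and the only actual work is the observation that $h$, a priori smooth only on $P$, may be extended to a compactly supported smooth function on $\RBbb^{d}$, since the sum and the displayed terms depend only on $h|_{P}$. You share that extension step, but then replace the citation by a self-contained cube-counting argument, and its three pieces are all sound: the interior/exterior cubes contribute $O(k^{-2})$ each because the centered cube kills the first-order Taylor term; the cubes near the codimension-two skeleton number $O(k^{d-2})$; and the facet contribution reduces to the one-dimensional endpoint correction. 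Two points in the facet step deserve to be made explicit when writing this up. First, your ``column of cubes transverse to $F$'' should be implemented after a unimodular change of coordinates taking the primitive normal $n_{F}$ to a coordinate vector; this preserves $\ZBbb^{d}$ and the lattice-normalized measures and makes the columns literal, and it is also what identifies the sum of the column corrections with $k^{d-1}\int_{F}h\,dx$ in the paper's normalization. Second, the clean constant $\tfrac{1}{2}$ uses that $P$ is a \emph{lattice} polytope: in adapted coordinates the facet hyperplane of $kP$ sits at integer height, so each column ends exactly at a lattice point and the endpoint correction is exactly $\tfrac{1}{2}h$; for a non-lattice polytope the correction would involve fractional parts and \eqref{eq:EM} would fail as stated. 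As for what each approach buys: the paper's citation is shorter and in principle yields the full expansion to all orders, while your argument is elementary, self-contained, and makes transparent both the geometric origin of the factor $\tfrac{1}{2}$ and the reason for normalizing $dx$ on $\partial P$ by the induced lattice, which is precisely what the application in Theorem \ref{thm:asymptotic} requires.
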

\begin{proof}
This is an application of the local Euler--Maclaurin formula of Berline--Vergne \cite[Section 5.4, Theorem 5]{BerlinVergnelocalasym}. The dominant term of the expansion is already given in \emph{loc. cit.} They discuss the subdominant term in Section 5.5. Notice also that in \cite{BerlinVergnelocalasym} one supposes that $h$ is a smooth function on $\RBbb^{d}$ with compact support. Since the sum and the first terms of the expansion depend only on the values of $h$ in $P$, which is compact, we can smoothly extend $h$ outside of $P$, still with compact support, for which the formula applies. 
\end{proof}

\subsection{} In general, the integrals appearing in the previous lemma can be evaluated by applying results of Brion \cite[Section 3.2]{Brion}, see also \cite[Corollary 6.1.10]{BPS}. In our setting, it will be enough to have the following.


\begin{lemma}\label{lemma:centermass}
Let $P$ be a $d$-dimensional simplex in $\RBbb^{d}$, with vertices $0, u_{1},\ldots, u_{d}$. Let $\lambda\in\RBbb^{d}$ be such that $\langle u_{i},\lambda\rangle=1$ for all $i$. Then 
\begin{displaymath}
    \int_{P} \left(1 - \langle x,\lambda\rangle 
    \right) dx=\frac{1}{d+1}\vol(P).
\end{displaymath}
\end{lemma}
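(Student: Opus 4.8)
The plan is to reduce the statement to the classical formula for the centroid of a simplex. First I would split the integrand linearly, writing
\begin{displaymath}
    \int_{P}\left(1-\langle x,\lambda\rangle\right)dx = \vol(P)-\int_{P}\langle x,\lambda\rangle\, dx,
\end{displaymath}
so that the content of the lemma is concentrated entirely in the second integral.

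Next I would recognize the vector integral $\int_{P} x\, dx$ as $\vol(P)$ times the center of mass (centroid) $c$ of the simplex $P$, and recall that for a $d$-simplex with vertices $v_{0},\ldots,v_{d}$ the centroid is the arithmetic mean $c=\frac{1}{d+1}(v_{0}+\cdots+v_{d})$. Here the vertices are $0,u_{1},\ldots,u_{d}$, so that $c=\frac{1}{d+1}(u_{1}+\cdots+u_{d})$. Using linearity of the inner product together with the hypothesis $\langle u_{i},\lambda\rangle=1$, I would then compute
\begin{displaymath}
    \int_{P}\langle x,\lambda\rangle\, dx = \vol(P)\,\langle c,\lambda\rangle = \vol(P)\cdot\frac{1}{d+1}\sum_{i=1}^{d}\langle u_{i},\lambda\rangle = \frac{d}{d+1}\vol(P).
\end{displaymath}
Substituting this back gives $\vol(P)-\frac{d}{d+1}\vol(P)=\frac{1}{d+1}\vol(P)$, which is the claim.

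There is essentially no genuine obstacle here; the only ingredient beyond elementary manipulations is the centroid formula for a simplex, which is standard and can either be quoted or proved in one line by the change of variables sending the standard simplex to $P$ (the affine map $x\mapsto\sum x_{i}u_{i}$, whose Jacobian is constant, so that barycentric symmetry is preserved). If one prefers to avoid invoking the centroid formula as a black box, an equivalent route is to pull everything back to the standard simplex $\Delta^{d}=\{t\in\RBbb^{d}_{+}\mid\sum t_{i}\leq 1\}$ under this affine map, where $\langle x,\lambda\rangle$ becomes $\sum t_{i}$, and then evaluate $\int_{\Delta^{d}}\sum_{i}t_{i}\, dt$ directly via the Dirichlet/beta integral $\int_{\Delta^{d}}t_{i}\, dt=\frac{1}{(d+1)!}$; summing over $i$ and dividing by $\vol(\Delta^{d})=\frac{1}{d!}$ recovers the factor $\frac{d}{d+1}$. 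Either way the computation is short and self-contained.
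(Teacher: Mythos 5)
Your proof is correct and follows essentially the same route as the paper: both reduce the claim to $\int_{P}\langle x,\lambda\rangle\,dx=\tfrac{d}{d+1}\vol(P)$ and evaluate it via the center of mass of the simplex. The only cosmetic difference is that the paper first normalizes to the standard simplex by a linear change of variables and reads off the centroid coordinate-wise, whereas you invoke the centroid-as-average-of-vertices formula directly (with the same change of variables relegated to a justification of that formula).
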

\begin{proof}  The formula is equivalent to the statement that \begin{displaymath}
    \int_{P} \langle x,\lambda\rangle 
     dx=\frac{d}{d+1}\vol(P).
\end{displaymath} 
Since both sides change the same way with respect to a linear change of variables, we can suppose that the $u_i$ are the standard basis of $\RBbb^{d}$ and $\lambda = (1,\ldots, 1)$. In this case, the integral amounts to a sum of $d$ terms of the form $\int_P x_j dx$. By a standard computation, 
\begin{displaymath}
    \frac{1}{\vol(P)}\int_P x_j dx = \hbox{ center of mass in the direction } x_j =  \frac{1}{d+1},
\end{displaymath}
which allows to conclude.
\end{proof}

\begin{theorem}\label{thm:asymptotic}
    Let $f(x_0, \ldots, x_n)=0$ be a convenient and generic isolated singularity. Then, for $k$ sufficiently large, the isolated singularity $f(x_0^k, \ldots, x_n^k) = 0$ satisfies the strong form of the conjecture. 
\end{theorem}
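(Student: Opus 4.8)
The plan is to exploit the fact that the substitution $x_i \mapsto x_i^k$ merely dilates the Newton polyhedron by the factor $k$, and then to compare the first two terms of the lattice-point asymptotics of $\mu$ and of $\widetilde{p}_g$ via the Euler--Maclaurin formula. Writing $f_k(x) = f(x_0^k, \ldots, x_n^k)$, one checks directly that $\Gamma_+(f_k) = k\,\Gamma_+(f)$ (because $km + \RBbb^{n+1}_+ = k(m+\RBbb^{n+1}_+)$), hence $\Gamma_-(f_k) = k\,\Gamma_-(f)$, and the defining function of the compact boundary is $\phi_k(x) = \phi(x/k)$. Moreover $f_k$ is again convenient, and it is again non-degenerate: its face polynomial on the compact face $k\tau$ is $f_\tau(x_0^k,\ldots,x_n^k)$, and since $x \mapsto (x_0^k,\ldots,x_n^k)$ is étale on $(\CBbb^\ast)^{n+1}$, a torus critical point of the former would produce one of $f_\tau$. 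Thus both \eqref{eq:Kouch} and \eqref{eq:spectral-genus-saito} apply to $f_k$. As the strong form reads $\widetilde{p}_g(f_k) \leq (\mu(f_k)-1)/(n+2)!$, it suffices to prove that $\mu(f_k)/(n+2)! - \widetilde{p}_g(f_k)$ grows like a strictly positive multiple of $k^{n}$, so that for large $k$ it exceeds $1/(n+2)!$.

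For the Milnor number, the homogeneity $\vol_j(k\Gamma_-) = k^j \vol_j(\Gamma_-)$ turns \eqref{eq:Kouch} into
\[
  \mu(f_k) = (n+1)!\,\vol(\Gamma_-)\,k^{n+1} - n!\,\vol_n(\Gamma_-)\,k^{n} + O(k^{n-1}).
\]
For the spectral genus, \eqref{eq:spectral-genus-saito} gives $\widetilde{p}_g(f_k) = \sum_{x \in (k\Gamma_-)^\circ \cap \ZBbb^{n+1}} (1 - \phi(x/k))$. To apply Lemma \ref{lemma:BerlineVergne} with $h = 1-\phi$, I triangulate $\Gamma_-$ into $(n+1)$-dimensional simplices, each the cone with apex $0$ over a simplex of a triangulation of $\Gamma_c$; on each such $\sigma$ the function $\phi$ is the linear form $\langle\,\cdot\,,\lambda_\sigma\rangle$ with $\langle u_j,\lambda_\sigma\rangle = 1$ at the vertices $u_j$ lying on $\Gamma_c$, so $h$ is affine and the lemma applies. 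Summing the local expansions and correcting by inclusion--exclusion for lattice points on the internal walls of the triangulation, the wall contributions cancel and one recovers the Euler--Maclaurin expansion for the closed polytope $\Gamma_-$; subtracting the boundary-lattice-point sum to restrict to the interior then reverses the sign of the codimension-one term, giving
\[
  \widetilde{p}_g(f_k) = \Big(\int_{\Gamma_-}(1-\phi)\,dx\Big)k^{n+1} - \frac{1}{2}\Big(\int_{\partial\Gamma_-}(1-\phi)\,dx\Big)k^{n} + O(k^{n-1}).
\]

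It remains to evaluate the two integrals with Lemma \ref{lemma:centermass}. On each $(n+1)$-dimensional cone-simplex the lemma yields $\int_\sigma (1-\phi)\,dx = \tfrac{1}{n+2}\vol(\sigma)$, so $\int_{\Gamma_-}(1-\phi)\,dx = \tfrac{1}{n+2}\vol(\Gamma_-)$; this equals the leading coefficient $(n+1)!/(n+2)! = 1/(n+2)$ of $\mu(f_k)/(n+2)!$, and the $k^{n+1}$ terms cancel exactly. For the boundary, $1-\phi$ vanishes on $\Gamma_c$, while on each coordinate face $\Gamma_- \cap \{x_i = 0\}$ — an $n$-dimensional cone over $\Gamma_c \cap \{x_i = 0\}$ — Lemma \ref{lemma:centermass} in dimension $n$ gives $\int (1-\phi) = \tfrac{1}{n+1}\vol_n(\Gamma_-\cap\{x_i=0\})$; summing over the $n+1$ coordinate hyperplanes yields $\int_{\partial\Gamma_-}(1-\phi)\,dx = \tfrac{1}{n+1}\vol_n(\Gamma_-)$. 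Therefore
\[
  \frac{\mu(f_k)}{(n+2)!} - \widetilde{p}_g(f_k) = \Big(\frac{1}{2(n+1)} - \frac{1}{(n+1)(n+2)}\Big)\vol_n(\Gamma_-)\,k^{n} + O(k^{n-1}) = \frac{n\,\vol_n(\Gamma_-)}{2(n+1)(n+2)}\,k^{n} + O(k^{n-1}).
\]
Since $f$ is convenient we have $\vol_n(\Gamma_-) > 0$, and $n \geq 1$, so the leading coefficient is strictly positive and the difference tends to $+\infty$, proving the strong form for $k$ large.

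I expect the main obstacle to be the rigorous justification of the second displayed asymptotic, namely that the non-smoothness of $\phi$ across the internal walls of the triangulation does not affect the $k^n$ coefficient. The cleanest route is the bookkeeping sketched above: applying Lemma \ref{lemma:BerlineVergne} to each closed cone-simplex, the $\tfrac12 k^n\!\int_W$ contributions from the two sides of an internal wall $W$ add to one full copy that is exactly removed by the inclusion--exclusion correction $-k^n\!\int_W$ for the doubly counted lattice points on $W$, so only the external boundary $\partial\Gamma_-$ survives. A secondary point to handle carefully is the passage from the closed-polytope form of Lemma \ref{lemma:BerlineVergne} to the interior-lattice-point sum, which produces the decisive minus sign in front of the boundary term; it is precisely this sign, combined with the exact cancellation of the volume terms, that makes the residual $k^n$ coefficient positive rather than negative.
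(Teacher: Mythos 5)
Your proposal is correct and follows essentially the same route as the paper's proof: dilation $\Gamma_-(f_k)=k\Gamma_-(f)$, Kouchnirenko's formula \eqref{eq:Kouch} for $\mu(f_k)$, M.~Saito's formula \eqref{eq:spectral-genus-saito} for $\widetilde{p}_g(f_k)$, the Berline--Vergne expansion (Lemma \ref{lemma:BerlineVergne}) applied to the cone triangulation of $\Gamma_-$ together with Lemma \ref{lemma:centermass}, the exact cancellation of the $k^{n+1}$ terms, and the resulting positive coefficient $\tfrac{n}{2(n+1)(n+2)}\vol_n(\Gamma_-)$ of $k^n$. The only differences are organizational (you pass through the closed-polytope expansion and then subtract boundary lattice points, while the paper writes the interior sum directly as $\sum_i-\sum_j$ over top-dimensional and codimension-one simplices) plus your welcome explicit verification, via the \'etale substitution argument, that $f_k$ remains convenient and non-degenerate, which the paper merely asserts.
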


\begin{proof}

First of all, notice that the singularity $f(x_0^k, \ldots, x_n^k) = 0$ is still generic and convenient. Hence, the discussion \textsection\ref{subsec:Kouchnirenko} applies to it. If we denote the corresponding Milnor number by $\mu(k)$, then by \eqref{eq:Kouch} we have the asymptotic expansion 
\begin{equation}\label{eq:muk}
    \frac{\mu(k)}{(n+2)!}  = \frac{k^{n+1}}{n+2} \vol(\Gamma_{-}) - \frac{k^{n}}{(n+1)(n+2)} \vol_{n}(\Gamma_{-}) + O(k^{n-1}).
\end{equation}
Similarly, we consider 
\begin{equation}\label{eq:pgk}
    \widetilde{p}_g(k) = \sum_{x \in (k\Gamma_{-}^{\circ}) \cap \ZBbb^{n+1}} \left(1- \phi(x/k)  \right),
\end{equation}
where $\phi$ is the piece-wise linear function determining the lower Newton polyhedron $\Gamma_{-}$. To estimate \eqref{eq:pgk}, we apply Lemma \ref{lemma:BerlineVergne}. For this, we first decompose $\Gamma_{-}$ into simplices with 0 as a vertex and the rest of the vertices in $\ZBbb^{n+1}\cap\Gamma_{c}$. These define a simplicial complex, whose top dimensional simplices we denote by $\sigma_{i}$, and whose codimension 1 simplices we denote by $\tau_{j}$. On each simplex, the function $\phi$ is linear. A first application of the lemma gives
\begin{displaymath}
    \widetilde{p}_{g}(k) = \sum_i \sum_{x \in k \sigma_i \cap \ZBbb^{n+1}} (1-\phi(x/k)) - \sum_j \sum_{x \in k \tau_j \cap \ZBbb^{n+1}} (1-\phi(x/k))+ O(k^{n-1}).
\end{displaymath}
Applying Lemma \ref{lemma:BerlineVergne} once again, together with Lemma \ref{lemma:centermass}, one straightforwardly deduces that 
\begin{equation}\label{eq:pgkbis}
    \widetilde{p}_{g}(k)= \frac{k^{n+1}}{n+2} \vol(\Gamma_{-}) - \frac{k^n}{2(n+1) } \vol_n(\Gamma_{-}) + O(k^{n-1}).
\end{equation}
Combining \eqref{eq:muk} and \eqref{eq:pgkbis} we find that 
\begin{displaymath}
    \frac{\mu(k)}{(n+2)!} - \widetilde{p}_g(k) = k^n \frac{n}{2 (n+1)(n+2)  }\vol_n(\Gamma_{-})+O(k^{n-1}).
\end{displaymath}
For big enough $k$, the leading term, which is positive, dominates. This concludes the proof of the theorem.
\end{proof}

\begin{corollary}\label{cor:asymptotic}
With the assumptions of Theorem \ref{thm:asymptotic}, we have
\begin{displaymath}
    \frac{\widetilde{p}_{g}(k)}{\mu(k)}\nearrow \frac{1}{(n+2)!}\quad\text{ as }\quad k \to +\infty,
\end{displaymath}
where $\mu(k)$ and $\widetilde{p}_{g}(k)$ are the Milnor number and spectral genus of $f(x_0^k, \ldots, x_n^k) = 0$.
\end{corollary}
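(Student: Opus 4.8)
The plan is to extract the corollary directly from the two asymptotic expansions \eqref{eq:muk} and \eqref{eq:pgkbis} produced in the proof of Theorem~\ref{thm:asymptotic}. Abbreviating $A=\vol(\Gamma_{-})$ and $B=\vol_{n}(\Gamma_{-})$, these read
\begin{displaymath}
    \mu(k)=(n+1)!\,A\,k^{n+1}-n!\,B\,k^{n}+O(k^{n-1}),\qquad
    \widetilde{p}_{g}(k)=\tfrac{1}{n+2}A\,k^{n+1}-\tfrac{1}{2(n+1)}B\,k^{n}+O(k^{n-1}).
\end{displaymath}
First I would divide numerator and denominator of $\widetilde{p}_{g}(k)/\mu(k)$ by $k^{n+1}$; the subdominant terms are $O(1/k)$, so the quotient tends to $\tfrac{A/(n+2)}{(n+1)!\,A}=\tfrac{1}{(n+2)!}$, giving the stated limit.

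For the convergence \emph{from below}, I would instead control the gap
\begin{displaymath}
    \frac{1}{(n+2)!}-\frac{\widetilde{p}_{g}(k)}{\mu(k)}
    =\frac{\tfrac{\mu(k)}{(n+2)!}-\widetilde{p}_{g}(k)}{\mu(k)}.
\end{displaymath}
By the final display in the proof of Theorem~\ref{thm:asymptotic} the numerator equals $\tfrac{n}{2(n+1)(n+2)}\,B\,k^{n}+O(k^{n-1})$, while $\mu(k)=\Theta(k^{n+1})$. Since $f$ is convenient, each coordinate hyperplane meets $\Gamma_{-}$ in an $n$-dimensional polytope, so $B=\vol_{n}(\Gamma_{-})>0$; hence the numerator is strictly positive for $k\gg0$ and the gap is a positive quantity of exact order $1/k$. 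Expanding to first order yields $\widetilde{p}_{g}(k)/\mu(k)=\tfrac{1}{(n+2)!}\bigl(1-\tfrac{nB}{2(n+1)A}\cdot\tfrac1k+O(1/k^{2})\bigr)$, so the ratio stays below $\tfrac{1}{(n+2)!}$ and its distance to the limit decays like $1/k$, which is the force of the arrow $\nearrow$.

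The one delicate point, and the only genuine obstacle, is whether $\nearrow$ should be read as literal term-by-term monotonicity. The two-term expansion carries a Berline--Vergne remainder $O(k^{n-1})$ (equivalently the $O(1/k^{2})$ above) that is not sign-controlled, so differencing consecutive values only pits a leading positive contribution of order $1/k^{2}$ against an uncontrolled $O(1/k^{2})$ error. I would therefore state the conclusion as convergence from below with a gap of precise order $1/k$, and remark that upgrading this to strict monotonicity for all large $k$ would require pushing the Euler--Maclaurin expansion of \eqref{eq:pgk} one term further, in contrast with the homogeneous case of Proposition~\ref{prop:conjecture-homogenenous}, where an exact product formula makes the monotonicity manifest.
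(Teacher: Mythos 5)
Your proposal is correct and is essentially the paper's own (implicit) argument: the corollary is stated without proof precisely because it follows by comparing the expansions \eqref{eq:muk} and \eqref{eq:pgkbis} exactly as you do, with the positivity of $\vol_{n}(\Gamma_{-})$ guaranteed by convenience. Your caveat about reading $\nearrow$ as convergence from below rather than literal term-by-term monotonicity also matches the paper's intent, since the remark following Proposition \ref{prop:consequence-KSaito} glosses this corollary as the limit being ``approached from below'', which is exactly what your order-$1/k$ positive gap establishes.
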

\qed

\section{Relation to other conjectures}
In this section, we discuss the relationship between our conjecture and other conjectural statements. We begin by showing that the weak conjecture can be derived from the Durfee-type conjecture. Then, we consider the relationship with the conjectures of K. Saito and C. Hertling, which focus on the distribution of the spectral numbers. 

\subsection{Durfee-type conjecture}
Recall from the introduction the Durfee-type conjecture \eqref{eq:Durfee-type}. In order to relate it to the weak conjecture, we begin by expressing the spectral genus in terms of the geometric genus of a suspension. 

Let $f=0$ be an isolated hypersurface singularity of dimension $n\geq 1$, and recall that we denote by $T_{s}$ the semi-simple part of the monodromy acting on $\Gr_{F}^{n}H^{n}(\Mil_{f})$. Let $k\geq 1$ be an integer such that $T_{s}^{k}=1$. We define the $(k+1)$-suspension of $f$ as 
\begin{displaymath}
    h(x_{0},\ldots, x_{n+1})=f(x_{0},\ldots, x_{n})+x_{n+1}^{k+1}.
\end{displaymath}
This function defines an isolated hypersurface singularity at $0$, of dimension $n+1$. 

\begin{lemma}\label{lemma:burgos}
With the notation as above, the spectral genus $\widetilde{p}_{g,f}$ of $f$ and the geometric genus $p_{g,h}$ of $h$ are related by
\begin{displaymath}
    p_{g,h}=k\cdot\widetilde{p}_{g,f}.
\end{displaymath}
\end{lemma}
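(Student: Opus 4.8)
The plan is to reduce the statement to the Thom--Sebastiani property recalled in \textsection\ref{subsec:various-spectral-preliminaries}, combined with the descriptions of $p_g$ and $\widetilde p_g$ as, respectively, the count of and a weighted sum over the spectral numbers lying in $(0,1]$. Throughout I would work in M. Saito's shifted convention, in which the spectral numbers of an $m$-dimensional singularity lie in $(0,m+1)$, so that by \eqref{eq:genus} and the conventions of \textsection\ref{subsec:conventions-spectrum} the geometric genus of an $m$-dimensional singularity equals the number of shifted spectral numbers in $(0,1]$, while \eqref{eq:spgenusMSaito} reads $\widetilde p_g=\sum_{\alpha_j'<1}(1-\alpha_j')$.

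First I would record the spectrum of the one-variable factor $g(x_{n+1})=x_{n+1}^{k+1}$, a singularity of dimension $0$ and Milnor number $k$. Its Milnor fiber consists of $k+1$ points cyclically permuted by the monodromy, so the eigenvalues of $T_s$ on reduced $H^0$ are the $(k+1)$-th roots of unity other than $1$; translating through \textsection\ref{subsec:conventions-spectrum}, the shifted spectral numbers of $g$ are $\beta_j'=j/(k+1)$ for $j=1,\ldots,k$. Writing $\alpha_1',\ldots,\alpha_\mu'$ for the shifted spectral numbers of $f$, the Thom--Sebastiani property then gives that the shifted spectral numbers of the suspension $h=f+g$ are exactly the sums $\alpha_i'+j/(k+1)$.

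Next I would compute $p_{g,h}$ as the number of these sums lying in $(0,1]$. Since every summand is positive, the contribution of a fixed $\alpha_i'$ is the number of $j$ with $1\le j\le k$ and $j/(k+1)\le 1-\alpha_i'$; this vanishes when $\alpha_i'\ge 1$ and otherwise equals $\lfloor (k+1)(1-\alpha_i')\rfloor$, the cap $j\le k$ being automatic because $(k+1)(1-\alpha_i')<k+1$. Hence $p_{g,h}=\sum_{i:\,\alpha_i'<1}\lfloor (k+1)(1-\alpha_i')\rfloor$.

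The crux, and the step I expect to be the main obstacle, is to evaluate each floor using the hypothesis $T_s^k=1$. Each $\alpha_i'<1$ lies in $(0,1)$ and therefore corresponds to $\Gr^n_F H^n(\Mil_f)$, on which $T_s$ acts with eigenvalue $\exp(-2\pi i\alpha_i')$; the relation $T_s^k=1$ forces this to be a $k$-th root of unity, so $\alpha_i'\in\tfrac1k\ZBbb$, say $\alpha_i'=m/k$ with $1\le m\le k-1$. Setting $p=k-m\in\{1,\ldots,k-1\}$ gives $(k+1)(1-\alpha_i')=p+p/k$ with $p/k\in(0,1)$, so that $\lfloor(k+1)(1-\alpha_i')\rfloor=p=k(1-\alpha_i')$. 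The point is precisely that $T_s^k=1$ pins the relevant spectral numbers to $\tfrac1k\ZBbb$, so no spectral number of $h$ can land exactly on the boundary $1$ and every floor collapses to the exact value $k(1-\alpha_i')$; the endpoint bookkeeping at $\alpha_i'=1$ must also be checked, but those terms contribute $0$ to both $p_{g,h}$ and $\widetilde p_{g,f}$. Summing then yields $p_{g,h}=k\sum_{i:\,\alpha_i'<1}(1-\alpha_i')=k\cdot\widetilde p_{g,f}$ by \eqref{eq:spgenusMSaito}, as desired.
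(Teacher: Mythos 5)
Your proof is correct and follows essentially the same route as the paper's: apply Thom--Sebastiani to write the spectrum of $h$ as the sums $\alpha_i'+j/(k+1)$, count those lying in $(0,1]$, and use $T_s^k=1$ to see that each $\alpha_i'<1$ lies in $\tfrac{1}{k}\ZBbb$ so that the count collapses to exactly $k(1-\alpha_i')$. The only cosmetic differences are that you make explicit the spectrum of the factor $x_{n+1}^{k+1}$ and phrase the count via floor functions, where the paper counts solutions of $j\le k\lambda_i+\lambda_i$ directly.
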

\begin{proof}
Let $\alpha_{i}^{\prime}$ be the spectral numbers of $f$, taken in $(0,n+1)$. Recall from \textsection\ref{subsec:various-spectral-preliminaries} the Thom--Sebastiani property for the spectral numbers. The spectral numbers of $h$ are thus given by
\begin{displaymath}
    \beta_{i,j}=\alpha_{i}^{\prime}+\frac{j}{k+1},\quad\text{for}\ i=1,\ldots,\mu_{f}\ \text{and}\ j=1,\ldots,k.
\end{displaymath}
The geometric genus of $h$ equals $\#\lbrace \beta_{i,j}\leq 1\rbrace$, by the very definition of the spectral numbers, cf. \textsection\ref{subsec:conventions-spectrum}, and because $p_{g,h}=\dim H^{n+1}(\Mil_{h})$, cf. \textsection\ref{section:genus}. 

Define $\lambda_{i}=1-\alpha_{i}^{\prime}$. The condition $\beta_{i,j}\leq 1$ entails $\alpha_{i}^{\prime}<1$, and is actually equivalent to
\begin{equation}\label{eq:burgos}
    j\leq k\lambda_{i}+\lambda_{i}.
\end{equation}
Because $T_{s}^{k}=1$, the quantities $k\lambda_{i}$ are integers. Moreover, $\lambda_{i}<1$. Therefore, given any $\alpha_{i}^{\prime}<1$, the equation \eqref{eq:burgos} has exactly $k\lambda_{i}$ solutions. Recalling the expression \eqref{eq:spgenusMSaito} for $\widetilde{p}_{g,f}$, we find
\begin{displaymath}
    p_{g,h}=\#\lbrace \beta_{i,j}\leq 1\rbrace=\sum_{\alpha_{i}^{\prime}<1}k(1-\alpha_{i}^{\prime})=k\cdot\widetilde{p}_{g,f}.
\end{displaymath}
This concludes the proof.
\end{proof}

\begin{proposition}\label{prop:burgos}
With the notation as above, the weak conjecture for $f$ is equivalent to the Durfee-type conjecture for $h$.
\end{proposition}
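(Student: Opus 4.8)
The plan is to relate the two conjectural inequalities directly through Lemma~\ref{lemma:burgos}, which provides the exact identity $p_{g,h}=k\cdot\widetilde{p}_{g,f}$. The strategy is to translate the Durfee-type inequality \eqref{eq:Durfee-type} for the suspension $h$ (which lives in dimension $n+1$) into the weak inequality for $f$ (in dimension $n$), tracking how the Milnor number transforms under the suspension operation.

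First I would record the Milnor number of $h$. Since $h$ is obtained from $f$ by the Thom--Sebastiani sum with $x_{n+1}^{k+1}$, the Milnor number is multiplicative: $\mu_h=\mu_f\cdot\mu_{x^{k+1}}=\mu_f\cdot k$, because the Milnor number of $x_{n+1}^{k+1}=0$ in one variable is $k$. Next I would write out the Durfee-type conjecture for $h$, which is an $(n+1)$-dimensional singularity, so the relevant factorial is $(n+2)!$:
\begin{displaymath}
    p_{g,h}<\frac{\mu_h}{(n+2)!}.
\end{displaymath}
Substituting $p_{g,h}=k\cdot\widetilde{p}_{g,f}$ and $\mu_h=k\cdot\mu_f$ into this inequality gives
\begin{displaymath}
    k\cdot\widetilde{p}_{g,f}<\frac{k\cdot\mu_f}{(n+2)!}.
\end{displaymath}
Since $k\geq 1$, I may cancel the factor $k$ from both sides, arriving at $\widetilde{p}_{g,f}<\mu_f/(n+2)!$, which is precisely the weak form of the conjecture for $f$. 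The reverse implication follows by reading the same chain of equivalences backwards: the weak inequality for $f$, multiplied through by $k$ and combined with the two identities, yields exactly the Durfee-type inequality for $h$.

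The one point requiring care is that the correspondence is a genuine equivalence rather than a one-directional implication, so I would emphasize that every step above is reversible: the identity of Lemma~\ref{lemma:burgos}, the multiplicativity of the Milnor number, and the cancellation of the positive integer $k$ all hold as equalities or sharp transformations. The main obstacle is not conceptual but lies in confirming the dimension bookkeeping and the factorial shift—namely that passing from dimension $n$ to dimension $n+1$ sends $(n+1)!$ in \eqref{eq:Durfee-type} to $(n+2)!$, which is exactly the factorial appearing in the weak form. Once this alignment is verified, the equivalence is immediate, and there is no residual inequality to estimate.
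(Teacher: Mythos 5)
Your proof is correct and follows exactly the same route as the paper's: apply Lemma~\ref{lemma:burgos}, use the Thom--Sebastiani multiplicativity $\mu_h = k\cdot\mu_f$, substitute into the Durfee-type inequality for the $(n+1)$-dimensional singularity $h$ (hence the factor $(n+2)!$), and cancel the positive integer $k$. The extra care you take in justifying the multiplicativity of the Milnor number and the reversibility of each step is consistent with, and slightly more explicit than, the paper's one-line argument.
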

\begin{proof}
The Milnor number of $h$ is $\mu_{h}=k\cdot\mu_{f}$. Hence, by the previous lemma, the Durfee-type conjecture for $h$ amounts to
\begin{displaymath}
    k\cdot\widetilde{p}_{g,f} < \frac{k\cdot\mu_{f}}{(n+2)!} ,
\end{displaymath}
which is equivalent to the weak conjecture for $f$.
\end{proof}

\subsection{}\label{subsec:proof-thmB} We now apply the previous proposition to address Theorem B in the introduction.\bigskip

\noindent\emph{Proof of Theorem B}. The suspension of a quasi-homogeneous singularity is again a quasi-homogeneous singularity. For these, the Durfee-type conjecture is a result of Yau--Zhang \cite[Theorem 1.2]{Yau-Zhang}. We conclude by Proposition \ref{prop:burgos}.

For the case of isolated plane curve singularities, N\'emethi studies in \cite{Nemethi-selecta-2} the Durfee conjecture for suspensions of the form $h(x,y,z)=f(x,y)+z^{N}$. He requires that $N\geq 2$ is coprime with the multiplicities of the exceptional divisors of the minimal embedded resolution of singularities of $f=0$. We can apply his results with $N=k+1$, where $k$ is any multiple of the least common multiple of the multiplicities of the exceptional divisors. Then $T_{s}^{k}=1$ by \cite[Th\'eor\`eme 3]{ACampo}, so that we are in the setting of Proposition \ref{prop:burgos}. In \cite[Theorem 5.1]{Nemethi-selecta-2}, it is proven that
\begin{displaymath}
    p_{g,h} \leq  \frac{\mu_{h}}{6}.
\end{displaymath}
We will check that, apart possibly from some exceptional cases, this is actually a strict inequality. The exceptional cases will be dealt with separately. 

Let $\sigma$ be the signature of the intersection form on $H^{2}(\Mil_{h},\ZBbb)$. N\'emethi observes that, for $N$ chosen as above, we have $\sigma+\mu_{h}=4p_{g,h}$. We thus need to show that $\sigma<-\mu_{h}/3$. If $f$ is not equivalent to a singularity of the form $A_{n}$, $A_{2n,2m}$, $D_{2n+3}$ or $E_{6}$, then, recalling that $\mu_{h}=(N-1)\mu_{f}$, the equation in the statement of \cite[Theorem 5.1 (d)]{Nemethi-selecta-2} gives
\begin{displaymath}
    \sigma\leq -\frac{N^{2}-1}{3N}\mu_{f}=-\frac{N+1}{3N}\mu_{h}<-\frac{\mu_{h}}{3}.
\end{displaymath}
Indeed, under the assumption on $f$, the quantity $\epsilon_{f}$ in that statement is 0 by definition. 

If $f$ is equivalent to $A_{n}$, $D_{2n+3}$ or $E_{6}$, then the strong form of the conjecture holds by Theorem \ref{thm:quasi-homogeneous}, since these are all quasi-homogeneous singularities. 

For a singularity of type $A_{2n,2m}$, N\'emethi's treatment shows that $\sigma<-\mu_{h}/3$ if $n+m\geq 5$, but it fails to provide a strict inequality otherwise. Instead, we deliver a simpler argument with a better outcome: the strong conjecture holds as well. Recall first that this singularity has equation $$f(x,y)=(x^{2}+y^{2n+1})(y^{2}+x^{2m+1}).$$ The lower Newton polygon has vertices at the points $(0,0)$, $A=(0,2n+1)$, $B=(2,2)$ and $C=(2m+1,0)$. It is thus convenient, and the compact Newton boundary has two edges, namely the segments $AB$ and $BC$. It is straightforward to see that the principal parts of $f$ corresponding to $AB$ and $BC$ have non-vanishing gradient over $(\CBbb^{\times})^{2}$, so that $f$ is non-degenerate in the sense of Kouchnirenko. Consequently, the discussion in \textsection\ref{subsec:Kouchnirenko} applies, and the spectral genus is given by the formula \eqref{eq:spectral-genus-saito}. The evaluation of the latter is elementary, and we omit the details. The result is
\begin{displaymath}
    \widetilde{p}_{g,f}=\frac{(n+2)^{2}}{2(2n+3)}+\frac{(m+2)^{2}}{2(2m+3)}-\frac{1}{2}.
\end{displaymath}
As for the Milnor number, Kouchnirenko's formula \eqref{eq:Kouch} yields $\mu_{f}=2n+2m+7$. With these expressions at hand, we find 
\begin{displaymath}
    \frac{\mu_{f}}{6}-\widetilde{p}_{g,f}\geq \frac{8}{15}>\frac{1}{6}.
\end{displaymath}
This concludes the proof.

\qed











\subsection{K. Saito's conjecture}
In \cite{KSaito:distribution}, K. Saito proposed a conjectural density describing the distribution of the spectral numbers of families of isolated hypersurfaces singularities, as the singularity gets worse. In such situations, K. Saito's conjecture entails in particular that the inequality in our conjecture is asymptotically an equality, and hence our conjecture is sharp.

\subsection{} A family $\Fcal$ of $n$-dimensional isolated hypersurface singularities, defined by germs of holomorphic functions $f\colon (\CBbb^{n+1},0)\to (\CBbb,0)$, will be called \emph{degenerating} if it is endowed with a filter and the function $\mu\colon\Fcal\to\RBbb$, induced by the Milnor number, tends to infinity. That is,
\begin{displaymath}
    \lim_{\Fcal}\mu=+\infty.
\end{displaymath}
The filter is a technical device which allows us to talk about limits in a rigorous fashion. We informally see $\Fcal$ as a family of singularities whose Milnor numbers converge to infinity.

\subsection{} To state K. Saito's conjecture, let  $f$ be a germ of a holomorphic function defining an isolated hypersurface singularity at the origin. Recall the notion of the associated spectral numbers, that we now take in the form $0<\alpha_{1}^{\prime}\leq\ldots\leq\alpha_{\mu}^{\prime}<n+1$. We define the corresponding spectral probability measure as
\begin{equation}\label{eq:KSaito:conjecture}
    \delta_{f}=\frac{1}{\mu}\sum_{j}\delta_{\alpha_{j}^{\prime}}.
\end{equation}
For a fixed $n$, consider the measure $N(s) ds$ on $[0, n+1]$ for which 
\begin{displaymath}
    \int f(s) N(s) ds = \int_{0 \leq \sum x_i \leq n+1} f\left(\sum x_i\right) dx_0 \cdots dx_n, 
\end{displaymath}
where the $x_i \in [0,1]$. Notice this is a push-forward measure. Indeed, denote by  $\nu$ the measure on $[0,1]^{n+1}$ induced by the Lebesgue measure on $\RBbb^{n+1}$, and introduce the addition function $\Sigma(x_{0},\ldots, x_{n})=\sum x_{i}: [0,1]^{n+1} \to [0, n+1]$.  Then $N(s)ds = \Sigma_{\ast } (\nu)$.

\begin{conjecture}[K. Saito]
For suitable degenerating families of singularities  $\Fcal$, we have the convergence 
\begin{equation}\label{eq:Saito-limit}
    \lim_{\Fcal}\delta_{f}(s)= N(s) ds,
\end{equation}
of probability measures on $\RBbb$, in the strong sense.
\end{conjecture}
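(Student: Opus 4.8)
The plan is to recast the convergence \eqref{eq:Saito-limit} as a statement about characteristic functions, and to exploit the fact that the target measure $N(s)\,ds=\Sigma_{\ast}(\nu)$ is, by its very definition, the $(n+1)$-fold convolution of the uniform measure $\mathbf{1}_{[0,1]}(s)\,ds$ on the unit interval (the Irwin--Hall law). The natural source of such a convolution structure on the spectral side is the Thom--Sebastiani property recalled in \textsection\ref{subsec:various-spectral-preliminaries}: since the spectral numbers of $f\oplus h:=f(x)+h(y)$ are the sums $\alpha_{i}^{\prime}+\beta_{j}^{\prime}$ and $\mu_{f\oplus h}=\mu_{f}\mu_{h}$, the empirical spectral measures satisfy $\delta_{f\oplus h}=\delta_{f}\ast\delta_{h}$. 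I would therefore first treat the one-variable model $f(x)=x^{a}$, whose spectral polynomial $(T^{1/a}-T)/(1-T^{1/a})=T^{1/a}+\dots+T^{(a-1)/a}$ shows that its spectral numbers are $j/a$ for $j=1,\dots,a-1$; as $a\to\infty$ the measure $\tfrac{1}{a-1}\sum_{j}\delta_{j/a}$ converges to $\mathbf{1}_{[0,1]}\,ds$, with a rate easily controlled through its Fourier coefficients.

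Building on this base case, for a Brieskorn--Pham family $f=x_{0}^{a_{0}}+\dots+x_{n}^{a_{n}}$ with all $a_{i}\to\infty$ along the filter, Thom--Sebastiani identifies $\delta_{f}$ with the convolution of the $n+1$ one-variable measures. I would verify the conclusion by the method of moments or, more cleanly, via characteristic functions: the Fourier transform $\int e^{2\pi i\xi s}\,d\delta_{f}$ factors as a product over the variables and converges to $\prod_{i}\widehat{\mathbf{1}_{[0,1]}}(\xi)=\widehat{N}(\xi)$, whence L\'evy's continuity theorem gives $\delta_{f}\to N(s)\,ds$ weakly. To upgrade this to convergence \emph{in the strong sense}, I would use that $N(s)\,ds$ is absolutely continuous with \emph{continuous} density, so that P\'olya's theorem promotes pointwise convergence of the characteristic functions to uniform convergence of the cumulative distribution functions.

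For the non-degenerate and convenient families of \textsection\ref{subsec:Kouchnirenko}, where the spectrum is governed by lattice points weighted by the piecewise-linear Newton function $\phi$, I would instead invoke equidistribution of lattice points, in the form of the Euler--Maclaurin estimate of Lemma \ref{lemma:BerlineVergne}, together with the center-of-mass computation of Lemma \ref{lemma:centermass}, to identify the limiting measure as a push-forward of Lebesgue measure along $\phi$. The delicate point is that this push-forward depends on the shape of $\Gamma_{-}$, so matching it with $\Sigma_{\ast}(\nu)$ forces the polyhedra to degenerate in a very specific, isotropic manner; isolating the class of families for which this occurs is exactly where a precise notion of \emph{suitable} degenerating family, and of the governing filter, must be pinned down.

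The hard part is precisely the universality built into K.\ Saito's formulation: a \emph{single} limit $N(s)\,ds$ is predicted for \emph{all} suitable families, whereas the arguments above only produce it for families with a transparent multiplicative (Thom--Sebastiani) or Newton-polyhedral structure, and for a general scaling family the limit is \emph{a priori} a $\phi$-push-forward depending on $\Gamma_{-}$. Forcing this push-forward to coincide with the Irwin--Hall law, handling degenerate singularities admitting no closed-form spectrum, and making rigorous the filtered limit are the genuine obstacles; they are the reason the statement is recorded here as a conjecture rather than proved, and why only the consequence $\widetilde{p}_{g}/\mu\to 1/(n+2)!$ (Proposition \ref{prop:consequence-KSaito}) is extracted in this paper.
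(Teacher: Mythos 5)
The first thing to say is that the paper does not prove this statement: it is recorded as a conjecture, and the paper's own position is that it is part of the conjecture to find meaningful families for which the limit holds. What the paper offers instead is a list of known cases with references (\textsection\ref{subsection:Saito-known-cases}): quasi-homogeneous singularities with weights tending to $0$ (K. Saito), irreducible plane curve singularities (K. Saito; Alberich-Carrami\~nana--\`Alvarez Montaner--G\'omez-L\'opez), and convenient, generic singularities with large Newton polyhedra (Almir\'on--Schulze). Your proposal correctly recognizes this status, and your two sketches track exactly those known cases: the Thom--Sebastiani/characteristic-function argument (your identity $\delta_{f\oplus h}=\delta_{f}\ast\delta_{h}$ is correct, since spectral numbers add and Milnor numbers multiply) is in substance K. Saito's own proof for quasi-homogeneous and Brieskorn--Pham families, which the paper alludes to when it notes that the Fourier transform of $\delta_{f}$ is computed from the spectral polynomial; and your lattice-point equidistribution sketch via Lemma \ref{lemma:BerlineVergne} and Lemma \ref{lemma:centermass} is the Almir\'on--Schulze case. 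Your closing paragraph, identifying the universality across all ``suitable'' families as the genuine open problem, is the right assessment of why this remains a conjecture.

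One concrete defect: your claimed upgrade from weak convergence to convergence ``in the strong sense'' via P\'olya's theorem does not deliver what the paper's definition demands. P\'olya's theorem yields uniform convergence of distribution functions, which is still a weak-type statement, whereas the paper defines strong convergence as convergence of the measures of \emph{every} measurable set. Taken literally, that can never hold here: the spectral numbers are rational, so $\delta_{f}(\QBbb)=1$ for every $f$, while $\int_{\QBbb}N(s)\,ds=0$. So either the ``strong sense'' must be read more loosely (the paper's actual applications, such as Proposition \ref{prop:consequence-KSaito}, only integrate indicators of intervals and the function $(1-s)\mathbf{1}_{[0,1]}(s)$, whose discontinuity sets are null for the absolutely continuous limit, so weak convergence already suffices by the Portmanteau theorem), or the formulation itself needs adjustment. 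In either case, you should not present P\'olya's theorem as closing this gap; flagging the discrepancy between atomic empirical measures and setwise convergence to a continuous law would be the accurate conclusion.
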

It is part of the conjecture to find meaningful families of singularities for which the statement holds. We recall that for probability measures on $\RBbb$, strong convergence means convergence of the measures of any measurable set. This is equivalent to the convergence of the integrals of any bounded measurable function. The convergence in the strong sense is important in the applications below, where we need to integrate functions which are not continuous, but at least measurable. 

\subsection{}\label{subsection:Saito-known-cases} The Fourier transform of the Dirac distribution $\delta_{f}$ can be expressed in terms of the spectral polynomial of $f$ \eqref{eq:spectral-polynomial}. By considering cases when the latter is known, one can prove K. Saito's conjecture in the following situations:
\begin{enumerate}
    \item Quasi-homogeneous singularities, with weights $w_{0},\ldots, w_{n}$ converging to 0, proven by Saito \cite[\textsection 3.7, Example 1]{KSaito:distribution}.
    \item Irreducible plane curve singularities, with Puiseux pairs $(k_{1},n_{1}),\ldots, (k_{g},n_{g})$ and $n_{g}\to +\infty$, proven by Saito \cite[\textsection 3.9, Example 3]{KSaito:distribution}. More generally, Alberich-Carrami\~{n}ana, \`Alvarez Montaner and G\'omez-L\'opez informed us that they have characterized the sequences of irreducible plane curve singularities which satisfy K. Saito's conjecture. We refer to \cite[Theorem 4.0.1]{Alberich} for the precise statement.
    \item For convenient, generic singularities, with large Newton polyhedra as in Section \ref{section:large-Newton} above, proven by Almir\'on--Schulze \cite[Theorem 1.1]{Almiron-Schulze}.
\end{enumerate}

\subsection{} By applying \eqref{eq:Saito-limit} to well-chosen functions, one can derive necessary conditions for a family $\Fcal$ to satisfy K. Saito's conjecture. We state some of these special features.
\begin{proposition}\label{prop:necessary-conditions-Saito}
Let $\Fcal$ be a degenerating family for which K. Saito's conjecture holds. Then:
\begin{enumerate}
    \item\label{item:necessary-conditions-Saito-1} The Hodge numbers of the Milnor fibers satisfy
    \begin{displaymath}
        \lim_{\Fcal}\dim\Gr_{F}^{p}H^{n}(\Mil_{f})=+\infty,\quad\text{for}\quad 0\leq p\leq n.
    \end{displaymath}
    \item\label{item:necessary-conditions-Saito-2} The minimal spectral value $\alpha_{1}^{\prime}$ satisfies
    \begin{displaymath}
        \lim_{\Fcal}\alpha_{1}^{\prime}=0.
    \end{displaymath}
\end{enumerate}
\end{proposition}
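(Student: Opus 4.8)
The plan is to deduce both statements from the strong convergence \eqref{eq:Saito-limit} by testing it against indicators of suitable intervals, exploiting that the limiting density $N(s)$ is strictly positive on the open interval $(0,n+1)$. Recall that $N(s)\,ds=\Sigma_\ast(\nu)$ is the push-forward of the uniform measure on $[0,1]^{n+1}$ under the sum map, so its value at $s$ is the volume of the slice $\{x\in[0,1]^{n+1}:\sum x_i=s\}$; this slice is a nondegenerate polytope for every $0<s<n+1$, whence $N(s)>0$ there. In particular, for $0\le s\le 1$ one has the explicit value $N(s)=s^n/n!$, so that $\int_0^\varepsilon N(s)\,ds=\varepsilon^{n+1}/(n+1)!>0$ for every $\varepsilon>0$. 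Since the test functions below are indicators of half-open intervals, hence merely measurable, it is precisely here that the strong (rather than weak) form of the convergence is indispensable.

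For part \eqref{item:necessary-conditions-Saito-1}, I would first record the dictionary between Hodge numbers and spectral numbers supplied by the conventions of \textsection\ref{subsec:conventions-spectrum}. In the shifted normalization $\alpha'=\alpha+1$, the condition $p=[n-\alpha]$ reads $n-p<\alpha'\le n-p+1$; therefore the number of shifted spectral numbers, counted with multiplicity, lying in the unit interval $I_p:=(n-p,\,n-p+1]$ equals $\dim\Gr^p_F H^n(\Mil_f)$. Consequently
\begin{displaymath}
    \dim\Gr^p_F H^n(\Mil_f)=\mu\cdot\delta_f(I_p).
\end{displaymath}
Applying strong convergence to the indicator of $I_p$ gives $\delta_f(I_p)\to c_p:=\int_{n-p}^{n-p+1}N(s)\,ds$ (the half-openness being immaterial since the limit measure is absolutely continuous), and $c_p>0$ because $I_p$ meets $(0,n+1)$ in a set of positive measure where $N>0$. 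As $\lim_{\Fcal}\mu=+\infty$ while $\delta_f(I_p)$ is eventually bounded below by $c_p/2>0$ along the filter, the product $\mu\cdot\delta_f(I_p)$ tends to $+\infty$, which is the claim.

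For part \eqref{item:necessary-conditions-Saito-2}, I would fix $\varepsilon>0$ and apply strong convergence to the indicator of $(0,\varepsilon)$. By the computation above the limit is $\int_0^\varepsilon N(s)\,ds=\varepsilon^{n+1}/(n+1)!>0$; hence $\delta_f\bigl((0,\varepsilon)\bigr)$ is eventually positive along the filter, so for such $f$ there is at least one spectral number in $(0,\varepsilon)$, and therefore $\alpha_1'<\varepsilon$. As $\varepsilon>0$ was arbitrary, this yields $\lim_{\Fcal}\alpha_1'=0$.

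The argument is short, and the step I expect to require the most care is the invocation of strong convergence for the \emph{discontinuous} indicators of $(0,\varepsilon)$ and of $I_p$: weak convergence would not suffice, and one must confirm that the limit measure charges these intervals positively, which reduces to the strict positivity of the Irwin--Hall density $N$ on $(0,n+1)$ noted at the outset. The translation between the Hodge gradeds and the spectral numbers in part \eqref{item:necessary-conditions-Saito-1} is pure bookkeeping with the conventions of \textsection\ref{subsec:conventions-spectrum}, but pinning down the half-open interval $I_p$ correctly is what guarantees that the counts sum to $\mu$.
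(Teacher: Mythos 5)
Your proof is correct and follows essentially the same route as the paper's: both parts are obtained by testing the strong convergence $\delta_f \to N(s)\,ds$ against indicators of the intervals $(n-p,\,n-p+1]$ and $(0,\varepsilon)$, using that the limit measure charges these sets positively (the paper phrases the positivity via the regions $A_p$ and $\varepsilon A_n$ in $[0,1]^{n+1}$ rather than via the Irwin--Hall density, but this is the same computation). Your explicit bookkeeping identifying $\dim\Gr^p_F H^n(\Mil_f)$ with $\mu\cdot\delta_f\bigl((n-p,\,n-p+1]\bigr)$ is exactly what underlies the paper's equation \eqref{eq:Saito-limit-Hodge}.
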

\begin{proof}
For the first point, we have
\begin{equation}\label{eq:Saito-limit-Hodge}
    \lim_{\Fcal}\frac{\dim\Gr_{F}^{p}H^{n}(\Mil_{f})}{\mu}=\lim_{\Fcal}\int_{n-p}^{n-p+1}\delta_{f}=\int_{A_{p}}dx_{0}\cdots dx_{n},
\end{equation}
where $A_{p}$ is the region in $[0,1]^{n+1}$ defined by $n-p\leq\sum x_{i}\leq n-p+1$. Because the integral over $A_{p}$ is strictly positive and  $\mu$ converges to $+\infty$ along $\Fcal$, the same must hold for $\dim\Gr_{F}^{p}H^{n}(\Mil_{f})$.

For the second point, fix $\varepsilon>0$. We have
\begin{displaymath}
    \lim_{\Fcal}\frac{\#\lbrace \alpha_{j}^{\prime}<\varepsilon\rbrace}{\mu}=\lim_{\Fcal}\int_{0}^{\varepsilon}\delta_{f}=\int_{\varepsilon A_{n}}dx_{0}\cdots dx_{n}=\frac{\varepsilon^{n+1}}{(n+1)!},
\end{displaymath}
where we used that $A_{n}$ is the standard simplex in $\RBbb^{n+1}_{+}$, whose volume is $1/(n+1)!$. That is, for every $\varepsilon^{\prime}>0$, we can find an element of the filter $\Fcal_{\varepsilon^{\prime}}$ such that, for all $f\in\Fcal_{\varepsilon^{\prime}}$,
\begin{displaymath}
   \left|\frac{\#\lbrace \alpha_{j}^{\prime}<\varepsilon\rbrace}{\mu} - \frac{\varepsilon^{n+1}}{(n+1)!}\right|<\varepsilon^{\prime}.
\end{displaymath}
In particular, for $\varepsilon^{\prime}=\varepsilon^{n+1}/2(n+1)!$, we infer that the set $\#\lbrace \alpha_{j}^{\prime}<\varepsilon\rbrace$ is non-empty for all $f\in\Fcal_{\varepsilon^{\prime}}$. Consequently, $\alpha_{1}^{\prime}<\varepsilon$. Because $\alpha_{1}^{\prime}>0$, this concludes the proof.
\end{proof}
We refer the reader to \cite[Section 5]{Alberich} for a complementary discussion on the minimal spectral number and K. Saito's conjecture. In particular, a proof of  Proposition \ref{prop:necessary-conditions-Saito} \eqref{item:necessary-conditions-Saito-2} is also provided therein. 

\subsection{} For the geometric genus and the spectral genus, K. Saito's conjecture leads to the following expectations.
\begin{proposition}\label{prop:consequence-KSaito}
Assume that K. Saito's conjecture holds for some degenerating family of singularities $\Fcal$. Then:
\begin{enumerate}
  \item The geometric genus satisfies
  \begin{displaymath}
    \lim_{\Fcal}\frac{p_{g}}{\mu}=\frac{1}{(n+1)!}.
  \end{displaymath}
  \item \label{prop:conseq2} The spectral genus satisfies
\begin{equation}\label{eq:KSaito-and-our-conjecture}
    \lim_{\Fcal}\frac{\widetilde{p}_{g}}{\mu}= \frac{1}{(n+2)!}.
\end{equation}
    \item In particular, we have
\begin{displaymath}
    \lim_{\Fcal}\frac{\widetilde{p}_{g}}{p_{g}}=\frac{1}{n+2}.
\end{displaymath}
\end{enumerate}
\end{proposition}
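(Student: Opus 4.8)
The plan is to realize each of the three ratios as the integral of a fixed bounded measurable function against the spectral probability measure $\delta_{f}$ of \eqref{eq:KSaito:conjecture}, and then to pass to the limit using the \emph{strong} convergence $\delta_{f}\to N(s)\,ds$ asserted by K. Saito's conjecture. The integrals against the limit $N(s)\,ds=\Sigma_{\ast}(\nu)$ will, by the push-forward description, become integrals over the standard simplex in $\RBbb^{n+1}$, for which Lemma \ref{lemma:centermass} is precisely what is needed.

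For the first assertion, I would observe that, in the convention of \textsection\ref{subsec:conventions-spectrum} with spectral numbers $\alpha_{j}^{\prime}\in(0,n+1)$, the quantity $p_{g}=\dim\Gr^{n}_{F}H^{n}(\Mil_{f})$ counts those $\alpha_{j}^{\prime}$ lying in $(0,1]$, so that $p_{g}/\mu=\int_{(0,1]}d\delta_{f}$. This is exactly the $p=n$ instance already carried out in the proof of Proposition \ref{prop:necessary-conditions-Saito}: strong convergence gives $\lim_{\Fcal}p_{g}/\mu=\int_{A_{n}}dx_{0}\cdots dx_{n}=1/(n+1)!$, where $A_{n}=\{0\leq\sum x_{i}\leq 1\}$ is the standard simplex. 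For the second assertion, I would use the expression \eqref{eq:spgenusMSaito}, namely $\widetilde{p}_{g}=\sum_{\alpha_{j}^{\prime}<1}(1-\alpha_{j}^{\prime})$, to write $\widetilde{p}_{g}/\mu=\int_{[0,1)}(1-s)\,d\delta_{f}(s)$. Applying strong convergence to the bounded measurable function $s\mapsto(1-s)\mathbf{1}_{[0,1)}(s)$, together with the push-forward definition of $N(s)\,ds$, yields
\[
    \lim_{\Fcal}\frac{\widetilde{p}_{g}}{\mu}=\int_{A_{n}}\left(1-\sum_{i}x_{i}\right)dx_{0}\cdots dx_{n}.
\]
The simplex $A_{n}$ has vertices $0,e_{0},\ldots,e_{n}$, and with $\lambda=(1,\ldots,1)$ one has $\langle e_{i},\lambda\rangle=1$ for all $i$, so Lemma \ref{lemma:centermass} (with $d=n+1$) evaluates this integral as $\frac{1}{n+2}\vol(A_{n})=\frac{1}{(n+2)!}$, which is \eqref{eq:KSaito-and-our-conjecture}.

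The third assertion is then immediate upon dividing the limits of the first two, giving $\tfrac{1/(n+2)!}{1/(n+1)!}=\tfrac{1}{n+2}$. The only step requiring genuine care — and precisely the reason the conjecture is formulated with convergence in the strong sense rather than the weak sense — is that the test functions $\mathbf{1}_{(0,1]}$ and $(1-s)\mathbf{1}_{[0,1)}$ are discontinuous, so weak convergence would not license passing to the limit. The absolute continuity of $N(s)\,ds$ guarantees that the boundary values $s=0,1$ carry no mass, which both justifies the limit and makes the open/half-open ambiguity in the intervals (equivalently, the contribution of spectral numbers equal to $1$, which vanishes since $1-\alpha_{j}^{\prime}=0$ there) harmless. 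I therefore expect no serious obstacle: once the ratios are written as integrals against $\delta_{f}$, the result reduces to the simplex computation of Lemma \ref{lemma:centermass}.
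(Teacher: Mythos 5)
Your proposal is correct and follows essentially the same route as the paper's own proof: the first item is exactly the $p=n$ case of \eqref{eq:Saito-limit-Hodge} from Proposition \ref{prop:necessary-conditions-Saito}, the second writes $\widetilde{p}_{g}/\mu=\int_{0}^{1}(1-s)\,\delta_{f}(s)$ via \eqref{eq:spgenusMSaito} and evaluates the limiting integral over the standard simplex by Lemma \ref{lemma:centermass}, and the third divides the two limits. Your additional remarks on the discontinuity of the test functions (hence the need for strong convergence) and on the harmlessness of the endpoint conventions are implicit in the paper's argument and are correctly handled.
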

\begin{proof}
The first item follows from \eqref{eq:Saito-limit-Hodge} in the case $n=p$, together with the evaluation of the volume of the standard simplex $A_{n}$.

For the second item, we first observe that
\begin{displaymath}
    \frac{\widetilde{p}_{g}}{\mu}=\int_{0}^{1}(1-s)\delta_{f}(s),
\end{displaymath}
according to \eqref{eq:spgenusMSaito}. Under K. Saito's conjecture, this converges to
\begin{displaymath}
    \int_{0}^{1}(1-s)N(s)ds=\int_{A_{n}}(1-x_{0}-\cdots-x_{n})dx_{0}\cdots dx_{n},
\end{displaymath}
Since the volume of $A_{n}$ is $1/(n+1)!$, by Lemma \ref{lemma:centermass} we conclude that the value of the integral is $1/(n+2)!$.  

The third point of the proposition is a combination of the first and the second points. 
\end{proof}
The second items of the proposition should be compared to the weak form of the conjecture: under both K. Saito's and our conjecture, the limit is approached from below. Proposition \ref{prop:conjecture-homogenenous} and Corollary \ref{cor:asymptotic} provide examples of this phenomenon. Also, notice that these statements are compatible with the known cases of K. Saito's conjecture reviewed in \textsection\ref{subsection:Saito-known-cases} above.

\subsection{Hertling's conjecture} While K. Saito's conjecture describes the distribution of the spectral numbers of an isolated singularity, Hertling's conjecture focuses on the variance. We discuss the relationship to our conjecture. Combined with K. Saito's conjecture, we conclude that Hertling's conjecture does not seem to trivially imply our strong conjecture for curves. 

\subsection{} Consider the spectral numbers of an isolated hypersurface singularity, given in the form $-1<\alpha_{1}\leq\ldots\leq\alpha_{\mu}\leq n$. Recall these are symmetric with respect to $\alpha\mapsto n-1-\alpha$. That is, we have the relationship $\alpha_{j}+\alpha_{\mu-j+1}=n-1$. Hence, the mean value of the spectral values is $(n-1)/2$. In \cite{Hertling}, Hertling proposed a bound for the variance. 

\begin{conjecture}[Hertling]
The spectral numbers of an isolated hypersurface singularity, taken in the interval $(-1,n)$, satisfy
\begin{equation}\label{eq:Hertling}
    \frac{1}{\mu}\sum_{j}\left(\alpha_{j}-\frac{n-1}{2}\right)^{2}\leq\frac{\alpha_{\mu}-\alpha_{1}}{12}.
\end{equation}
\end{conjecture}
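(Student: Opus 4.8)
The plan is to reformulate the inequality probabilistically and then exploit the additive behaviour of the spectrum under Thom--Sebastiani sums. Writing $V_f = \frac{1}{\mu}\sum_j (\alpha_j - \frac{n-1}{2})^2$ for the variance and $R_f = \alpha_\mu - \alpha_1$ for the range, the claim is exactly $V_f \leq R_f/12$. The symmetry $\alpha_j + \alpha_{\mu-j+1} = n-1$ recalled just above ensures that $\frac{n-1}{2}$ is genuinely the mean of the spectral probability measure $\delta_f$ of \eqref{eq:KSaito:conjecture}, so that $V_f$ is an honest variance.

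First I would record the decisive structural input, namely the Thom--Sebastiani property of \textsection\ref{subsec:various-spectral-preliminaries}: the spectrum of a sum $f \oplus h$ is the multiset $\{\alpha_i' + \beta_j'\}$, so $\delta_{f\oplus h}$ is the convolution $\delta_f * \delta_h$. Consequently the mean, the variance, and the range all add, giving $V_{f\oplus h} = V_f + V_h$ and $R_{f\oplus h} = R_f + R_h$; in particular the inequality $V \leq R/12$ is \emph{stable} under Thom--Sebastiani, which reduces the problem to building blocks. The base case is the one-variable germ $x^{a}$, whose spectral numbers are equally spaced with gap $1/a$; a direct computation yields $V = R/12$, so the conjecture holds with \emph{equality} for each $A_{a-1}$. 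Since every Brieskorn--Pham germ $x_0^{a_0}+\cdots+x_n^{a_n}$ is an iterated Thom--Sebastiani sum of such pieces, the bound (again an equality) follows for them, and because the spectrum of a quasi-homogeneous singularity depends only on its weights, it then propagates to all quasi-homogeneous singularities, recovering Hertling's theorem in that case.

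The hard part will be the general, non-quasi-homogeneous case, where the spectrum is no longer a sumset and the Thom--Sebastiani reduction is unavailable. The natural substitute is deformation theory: the spectrum is constant in $\mu$-constant families by \cite[Theorem 2.8]{Steenbrink-semicontinuity}, and under Steenbrink's semicontinuity the counting function of spectral numbers is controlled as the singularity degenerates, with spectral numbers drifting toward the central value $\frac{n-1}{2}$. One would hope to compare an arbitrary germ to a quasi-homogeneous model with the same Newton data and argue that the variance cannot exceed $R_f/12$. The obstruction is that semicontinuity governs only how many spectral numbers lie in each interval, not the quadratic functional $V_f$, and moreover the quantity $\alpha_\mu - \alpha_1$ on the right-hand side varies rather than staying fixed; making the interplay between the counting function and the second moment quantitative enough to beat the \emph{moving} bound $R_f/12$ is precisely where a uniform argument stalls, which is why only partial cases are presently known.
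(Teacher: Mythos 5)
The first thing to note is that this statement is a \emph{conjecture}: the paper does not prove it, and it is open in general. The paper merely records the known cases — the quasi-homogeneous case (Hertling, with an elementary proof by Dimca) and the plane curve case (M.~Saito, Br\'elivet), the latter cited as a theorem immediately after the conjecture. Your proposal, by your own admission in its last paragraph, stalls exactly where the conjecture is genuinely open, so it is not a proof of the statement. That said, the Thom--Sebastiani part of what you do have is correct and is a nice observation: for a join $f\oplus h$ the spectral measure is the convolution $\delta_f * \delta_h$, so the mean, the variance and the range are all additive, the inequality $V\leq R/12$ is stable under joins, and the one-variable computation for $x^a$ (equally spaced spectrum $k/a$, $k=1,\dots,a-1$, variance $(a-2)/(12a)$, range $(a-2)/a$) gives equality; hence equality for every Brieskorn--Pham germ.

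However, the step ``because the spectrum of a quasi-homogeneous singularity depends only on its weights, it then propagates to all quasi-homogeneous singularities'' is a genuine gap. Brieskorn--Pham germs only realize weights of the form $1/a_i$; a general quasi-homogeneous germ, e.g.\ $x^3y+y^5$ with weights $(4/15,\,1/5)$, has a spectrum which is \emph{not} an iterated convolution of $A_k$-spectra. In Steenbrink's product formula $\mathrm{Sp}_f(T)=\prod_j (T^{w_j}-T)/(1-T^{w_j})$, the individual factors for general rational $w_j$ are not spectra of one-variable germs (they need not even have non-negative coefficients), so the additivity-of-variance argument cannot be run factor by factor; this case genuinely requires Hertling's original argument or Dimca's proof, and cannot be claimed as a corollary of your base case. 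Finally, for the general case your deformation-theoretic substitute cannot work as described, for the reason you yourself identify plus one more: semicontinuity controls only the counting function of the spectrum, not the second moment; an arbitrary germ need not admit a comparison quasi-homogeneous (or Newton non-degenerate) model in a $\mu$-constant family; and the right-hand side $(\alpha_\mu-\alpha_1)/12$ moves under deformation. This is precisely why the conjecture remains open beyond the quasi-homogeneous and plane curve cases, and why the paper states it as a conjecture rather than proving it.
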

Hertling's conjecture is known for quasi-homogeneous singularities. In this case, the predicted inequality is in fact an equality. This was proven by Hertling himself in \cite{Hertling}. An elementary proof was later found by Dimca \cite{Dimca:Hertling}. The case of curve singularities has been addressed by several authors. In the unpublished article \cite{MSaito:Hertling}, M. Saito proved the case of irreducible plane curve singularities. In \cite{Brelivet}, Br\'elivet settled the case of plane curves with convenient, non-degenerate Newton polygon. Later, in the unpublished article \cite{Brelivet:unpublished}, he considered general plane curve singularities.

\begin{theorem}[Br\'elivet, M. Saito]
Hertling's conjecture holds for isolated plane curve singularities. 
\end{theorem}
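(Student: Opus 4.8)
The plan is to assemble the statement from the three contributions named in the attribution, organized by increasing generality of the curve. Throughout one works with $n=1$, so the symmetry $\alpha_j+\alpha_{\mu-j+1}=n-1$ places the centre of the spectrum at $0$; consequently the mean of the spectral numbers is $0$ and $\alpha_1=-\alpha_\mu$, so Hertling's inequality \eqref{eq:Hertling} reduces to the second-moment bound
\[
    \frac{1}{\mu}\sum_j\alpha_j^{2}\leq\frac{\alpha_\mu}{6}.
\]
It is this concrete inequality that one must verify in each case.

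For an \emph{irreducible} germ $f\colon(\CBbb^2,0)\to(\CBbb,0)$, the spectrum is given explicitly in terms of Puiseux pairs, as recalled in Lemma \ref{lemma:spectrum-irreducible}. Substituting this description, the left-hand side becomes a sum of squares of linear forms over the same lattice triangles that govern the spectral genus, so evaluating it is a second-moment analogue of the Mordell-type computation of Proposition \ref{prop:Mordell}, while $\alpha_\mu$ is read off directly from the smallest spectral datum. Comparing the resulting rational functions of the $(k_i,n_i)$ is precisely M.~Saito's argument in \cite{MSaito:Hertling}, which I would invoke here; in principle it can be reproduced along the lines of the corollary following Lemma \ref{lemma:spectrum-irreducible}.

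For a \emph{convenient, non-degenerate} (but possibly reducible) plane curve, the spectrum is instead governed by the Newton polygon through \eqref{eq:spectral-genus-saito} and the discussion of \textsection\ref{subsec:Kouchnirenko}. Here both the first and the second spectral moments can be controlled by the Euler--Maclaurin expansion of Lemma \ref{lemma:BerlineVergne} together with the centre-of-mass identity of Lemma \ref{lemma:centermass}, reducing \eqref{eq:Hertling} to a comparison of volumes and boundary integrals; this is Br\'elivet's theorem in \cite{Brelivet}.

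The remaining, and genuinely hard, case is that of a \emph{general} plane curve singularity, reducible and with possibly degenerate Newton polygon, settled in \cite{Brelivet:unpublished}. I expect this to be the main obstacle: there is no closed combinatorial formula for the spectrum, so neither the Puiseux-pair computation nor the Newton-polygon estimates above apply directly, and one cannot simply reduce to the non-degenerate situation, since forcing the Newton polygon to be non-degenerate generally requires a deformation that changes $\mu$ and hence the spectrum. Br\'elivet's resolution rests on a finer analysis of the Hodge-theoretic invariants attached to the mixed Hodge structure on $H^{1}(\Mil_f)$. The proof therefore consists in invoking \cite{MSaito:Hertling} for the irreducible case, \cite{Brelivet} for the convenient non-degenerate case, and \cite{Brelivet:unpublished} for the general case, which together exhaust all isolated plane curve singularities.
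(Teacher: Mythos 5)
Your proposal is correct and takes essentially the same route as the paper: the paper states this theorem with no proof at all, simply attributing it to the combination of M.~Saito \cite{MSaito:Hertling} (irreducible case), Br\'elivet \cite{Brelivet} (convenient non-degenerate case), and Br\'elivet \cite{Brelivet:unpublished} (general case), exactly the three citations you assemble. Your reduction of \eqref{eq:Hertling} for $n=1$ to $\frac{1}{\mu}\sum_j\alpha_j^2\leq\frac{\alpha_\mu}{6}$ and your method sketches are accurate extras, with the only small logical remark being that the general case of \cite{Brelivet:unpublished} alone already subsumes the other two.
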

\qed

The previous statement has the following consequence for our conjecture. 
\begin{corollary}
For plane curve singularities, if $\alpha_{\mu}\leq \frac{2}{3}\sqrt{1-\mu^{-1}}$, then the strong form of the conjecture holds. 
\end{corollary}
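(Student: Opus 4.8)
The plan is to specialize to $n=1$, where the strong form reads $\widetilde{p}_g\le(\mu-1)/6$, and to feed the hypothesis on $\alpha_\mu$ into Hertling's inequality, which for plane curves is available by the theorem of Br\'elivet and M.~Saito recalled just above. First I would pass to the symmetric spectrum $-1<\alpha_1\le\cdots\le\alpha_\mu<1$: by the symmetry $\alpha\mapsto-\alpha$ (the case $n=1$ of $\alpha\mapsto n-1-\alpha$) one has $\alpha_1=-\alpha_\mu$, and the description of the spectral genus in \S\ref{subsec:conventions-spectrum} gives $\widetilde{p}_g=-\sum_{\alpha_j<0}\alpha_j=\tfrac12\sum_j|\alpha_j|$. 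With this normalization Hertling's bound becomes an $\ell^2$-control on the spectrum,
\[
\sum_j\alpha_j^2\ \le\ \frac{\mu(\alpha_\mu-\alpha_1)}{12}\ =\ \frac{\mu\,\alpha_\mu}{6}.
\]

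The heart of the matter is then a purely elementary estimate: bound $\tfrac12\sum_j|\alpha_j|$ from above, knowing only $\sum_j\alpha_j^2\le\mu\alpha_\mu/6$ and the box constraint $|\alpha_j|\le\alpha_\mu$. The first thing I would try is Cauchy--Schwarz, $\big(\sum_j|\alpha_j|\big)^2\le\mu\sum_j\alpha_j^2$, which yields $\widetilde{p}_g\le\tfrac{\mu}{2}\sqrt{\alpha_\mu/6}$; comparing with $(\mu-1)/6$ shows that this already proves the conclusion whenever $\alpha_\mu\le\tfrac23(1-\mu^{-1})^2$. Since $\tfrac23(1-\mu^{-1})^2\le\tfrac23\sqrt{1-\mu^{-1}}$, this naive step is too lossy to reach the stated threshold, so the $\ell^2$-bound alone cannot be the whole story.

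The extra input I would exploit is that the extreme values $\pm\alpha_\mu$ are \emph{attained}, so a definite part of the variance budget $\mu\alpha_\mu/6$ is necessarily spent on coordinates of size $\alpha_\mu$ rather than spread out evenly. Concretely, I would recast the estimate as a constrained maximization of $\sum_j|\alpha_j|$ over the intersection of the cube $[-\alpha_\mu,\alpha_\mu]^\mu$ with the Euclidean ball $\sum_j\alpha_j^2\le\mu\alpha_\mu/6$, with two coordinates forced to $\pm\alpha_\mu$. The maximizer has the usual Lagrangian shape (a few coordinates pinned at $\pm\alpha_\mu$, the rest equal), so the maximum collapses to an explicit one-variable function of $\alpha_\mu$ and $\mu$, which I would then compare term by term with $(\mu-1)/6$.

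The main obstacle is exactly this final comparison: making the constant land on $\tfrac23\sqrt{1-\mu^{-1}}$ rather than on the weaker thresholds produced by bare Cauchy--Schwarz or by a one-coordinate refinement. Identifying the correct extremal configuration and squeezing the estimate to that precise bound is where the real work lies, and I expect it may require a finer feature of the spectra of plane curve singularities beyond Hertling's variance inequality; by contrast, the algebraic reductions preceding it are routine.
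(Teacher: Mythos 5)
Your first half is, step for step, the paper's own proof: the symmetry $\alpha_{1}=-\alpha_{\mu}$, the identity $\widetilde{p}_{g}=\tfrac{1}{2}\sum_{j}|\alpha_{j}|$, Hertling's inequality (available for plane curves by the Br\'elivet--M.~Saito theorem) in the form $\sum_{j}\alpha_{j}^{2}\le\mu\alpha_{\mu}/6$, and Cauchy--Schwarz, giving exactly the displayed inequality $\widetilde{p}_{g}\le\mu\sqrt{\alpha_{\mu}/24}$ with which the paper's proof ends. The paper then asserts that the corollary is ``a straightforward consequence of the latter inequality,'' and your objection to that step is correct: this inequality yields the strong form precisely when $\alpha_{\mu}\le\tfrac{2}{3}(1-\mu^{-1})^{2}$, and since $t^{2}\le\sqrt{t}$ on $[0,1]$ that is a strictly stronger hypothesis than the stated $\alpha_{\mu}\le\tfrac{2}{3}\sqrt{1-\mu^{-1}}$. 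So the gap you ran into is real, but it is a gap in the paper, not in your reasoning: as written, the paper's argument only justifies the corollary with the smaller threshold.

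What you should not have conceded is your final paragraph: the refinement you outline does close the gap, using nothing beyond Hertling's inequality and the symmetry. Pin a single coordinate: some positive spectral number equals $\alpha_{\mu}$, there are at most $\mu/2-1$ further positive ones, and their $\ell^{2}$-budget is $\mu\alpha_{\mu}/12-\alpha_{\mu}^{2}$, so Cauchy--Schwarz applied to those alone gives
\[
\widetilde{p}_{g}\ \le\ \alpha_{\mu}+\sqrt{\left(\frac{\mu-2}{2}\right)\left(\frac{\mu\alpha_{\mu}}{12}-\alpha_{\mu}^{2}\right)}.
\]
Provided $\alpha_{\mu}\le(\mu-1)/6$ (true under the hypothesis for $\mu\ge5$, and for $2\le\mu\le4$ because Hertling applied to the pair $\pm\alpha_{\mu}$ forces $\alpha_{\mu}\le\mu/12\le(\mu-1)/6$; the case $\mu=1$ is trivial), squaring shows the right-hand side is at most $(\mu-1)/6$ if and only if
\[
q(\alpha_{\mu}):=36\mu\,\alpha_{\mu}^{2}-(3\mu^{2}+18\mu-24)\,\alpha_{\mu}+2(\mu-1)^{2}\ \ge\ 0.
\]
Two computations finish the proof: the discriminant of $q$ factors as $9(\mu-2)^{2}(\mu^{2}-16\mu+16)$, which is nonpositive for $2\le\mu\le14$, so there $q\ge0$ identically; and $q(2/3)=18$ for every $\mu$, while for $\mu\ge11$ the vertex $(\mu^{2}+6\mu-8)/(24\mu)$ of $q$ lies to the right of $2/3$, so $q\ge18>0$ on all of $[0,2/3]$. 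Since the hypothesis gives $\alpha_{\mu}\le\tfrac{2}{3}\sqrt{1-\mu^{-1}}<\tfrac{2}{3}$, in every case $q(\alpha_{\mu})\ge0$ and the strong form follows. The quantitative point you missed is that in the critical regime $\alpha_{\mu}\approx2/3$ the pinned coordinate removes the \emph{fixed} amount $\alpha_{\mu}^{2}\approx4/9$ from the variance budget, which moves the admissible threshold from $\tfrac{2}{3}-\tfrac{4}{3\mu}+O(\mu^{-2})$ up past $\tfrac{2}{3}$, comfortably above the stated $\tfrac{2}{3}\sqrt{1-\mu^{-1}}=\tfrac{2}{3}-\tfrac{1}{3\mu}+O(\mu^{-2})$. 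So no finer property of plane-curve spectra is needed: your plan, carried out, simultaneously proves the corollary as stated and repairs the paper's proof.
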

\begin{proof}
For plane curve singularities, the spectral numbers normalized in $(-1,1)$ are invariant under the symmetry $x\mapsto -x$. In particular, $\alpha_{\mu}=-\alpha_{1}$. Hence, combining \eqref{eq:Hertling} with the Cauchy--Schwarz inequality, we derive
\begin{displaymath}
    \widetilde{p}_{g}=\sum_{0<\alpha_{j}<1}\alpha_{j}\leq \mu\sqrt{\frac{\alpha_{\mu}}{24}}.
\end{displaymath}
The claim is a straightforward consequence of the latter inequality.
\end{proof}

\subsection{} Suppose now that we are in the setting of a family $\Fcal$ of plane curve singularities for which K. Saito's conjecture holds. We can then apply Proposition \ref{prop:necessary-conditions-Saito} \eqref{item:necessary-conditions-Saito-2}. Recalling that $\alpha_{1}^{\prime}=\alpha_{1}+1$ and $\alpha_{1}=-\alpha_{\mu}$, we find
\begin{displaymath}
    \lim_{\Fcal}\alpha_{\mu}=1.
\end{displaymath}
Therefore, we see that the criterion provided by the corollary, which implies $\limsup_{\Fcal}\alpha_{\mu}\leq 2/3$, will apply at most to some exceptional singularities of the family $\Fcal$, but not in general. 

\section*{Funding}

The first author is supported by the Swedish Research Council, VR grant 2021-03838 "Mirror symmetry in genus one". The second author is supported by the Knut och Alice Wallenberg foundation "Guest researcher program".

\section*{Acknowledgements}

We thank Omid Amini, Jean-Benoît Bost, Lars Halvard Halle, Thomas Mordant, Claude Sabbah, Kyoji Saito and Jan Stevens for discussions relating to this article, and Josep \`Alvarez Montaner for bringing the article \cite{Alberich} to our attention. We heartily thank José Ignacio Burgos Gil for providing the suspension trick which became Lemma \ref{lemma:burgos}. We are deeply indebted to Ken-Ichi Yoshikawa for many discussions and the hospitality in Kyoto, and for many insights into the analytic torsion. The possibility of the continuity of the analytic torsion on Hilbert schemes was suggested by him. The second author wishes to thank the Department of Mathematics at Chalmers University of Technology and the University of Gothenburg for the hospitality too.

\bibliographystyle{amsplain}
\bibliography{Spectralgenus}{}

\end{document}